\newtheorem{thm}{Theorem}[section] 
\newtheorem{lem}[thm]{Lemma} 
\newtheorem{cor}[thm]{Corollary} 
\newtheorem{prop}[thm]{Proposition} 
\newtheorem{defn}{Definition}[section]
\theoremstyle{definition} 
\theoremstyle{remark}
\theoremstyle{definition}
\def\O{\Omega}
\def\S{\Sigma} 
\def\n{\nabla}
\def\p{\partial}
\def\a{\alpha}
\def\n{\nabla}
\def\O{\Omega}
\def\p{\partial}
\def\a{\alpha}
\def\d{\delta}
\def\s{\sigma}
\def\ov{\overline}
\def\n{\nabla}
\def\<{\langle}
\def\>{\rangle}
\def\n{\nabla}
\def\NN{\mathbb{N}}
\def\RR{\mathbb{R}}
\def\SS{\mathbb{S}}
\def\O{\Omega}
\def\p{\partial}
\def\a{\alpha}
\def\d{\delta}
\def\K{\mathcal{K}}
\def\s{\sigma}
\def\ov{\overline}
\def\R{\mathbb{R}}
\def\C{\mathcal{C}}
\def\C{\mathcal{C}}
\def\H{\mathcal{H}}
\def\ol{\overline}
\patchcmd{\abstract}{\scshape\abstractname}{\textbf{\abstractname}}{}{}
\def\@makefnmark{} 
\numberwithin{equation}{section}
\numberwithin{exa}{section}
\begin{document}
\title{The capillary Christoffel-Minkowski problem}

\author[X. Mei]{Xinqun Mei}
\address[X. Mei]{Key Laboratory of Pure and Applied Mathematics, School of Mathematical Sciences, Peking University,  Beijing, 100871, P.R.China}

\email{\href{qunmath@pku.edu.cn}{qunmath@pku.edu.cn}}

\author[G. Wang]{Guofang Wang}
\address[G. Wang]{Mathematisches Institut, Albert-Ludwigs-Universit\"{a}t Freiburg, Freiburg im Breisgau, 79104, Germany}
\email{\href{guofang.wang@math.uni-freiburg.de}{guofang.wang@math.uni-freiburg.de}}

\author[L. Weng]{Liangjun Weng}
\address[L. Weng]{Centro di Ricerca Matematica Ennio De Giorgi, Scuola Normale Superiore, Pisa, 56126, Italy \& Dipartimento di Matematica, Universit\`a di Pisa, Pisa, 56127, Italy}
\email{\href{mailto:liangjun.weng@sns.it}  {liangjun.weng@sns.it}}

\subjclass[2020]{Primary: 53C21, 35J66. Secondary: 53C42, 53C45, 35J60}

\keywords{$k$-th capillary area measure, capillary Christoffel-Minkowski problem, Robin boundary condition, capillary hypersurface, a priori estimates}

\begin{abstract} In this article, we introduce a $k$-th capillary area measure for capillary convex bodies in the Euclidean half-space, which serves as a boundary counterpart to the classical concept of area measure (see, e.g., \cite[Chapter 8]{Sch}). We then propose a Christoffel-Minkowski problem for capillary convex bodies, to find a capillary convex body in the Euclidean half-space with a prescribed $k$-th capillary area measure. This problem is equivalent to solving a Hessian-type equation with a Robin boundary value condition. We then establish the existence and uniqueness of a smooth solution under a natural sufficient condition.
\end{abstract}

 \maketitle


\section{Introduction}
In convex geometry, the prescribed $k$-th area measure problem is a well-known problem. It asks whether, for a given positive function $f$ on $\SS^{n}$, one can find a closed, strongly convex hypersurface $\Sigma$ with $k$-th symmetric function of the principal curvature radii equal to $f$ as a function on its normals? When $k=n$, it corresponds to the famous Minkowski problem, which has been fully solved through the works of Minkowski \cite{Min}, Alexandrov \cite{Alex}, Lewy \cite{Lewy}, Nirenberg \cite{Nire}, Pogorelov \cite{Pog52, Pog}, Cheng-Yau \cite{CY76} and many others. Analytically, the Minkowski problem is equivalent to solving a Monge-Amp\`ere equation on $\SS^{n}$. The study of the Minkowski problem has played a significant role in advancing research on fully nonlinear equations, especially on the Monge-Amp\`ere equation. We can also see a closely related work by Xia \cite{Xia} for an anisotropic Minkowski problem for closed hypersurfaces.
When $k=1$, it is known as the Christoffel problem, which is equivalent to finding convex solutions to the Laplace equation on $\SS^{n}$.  Firey \cite{Firey, Firey1} and Berg \cite{Berg}  found necessary and sufficient conditions using the Green function of the Laplacian on $\SS^{n}$. More recently, Li-Wan-Wang \cite{LWW} expressed the Christoffel problem as a Laplace equation on the entire space $\RR^{n+1}$ and provided a new and simpler necessary and sufficient condition for the Christoffel problem.  Further details on Christoffel's problem can be found in \cite{Chri,  Hilbert, Hur, Suss}. For the intermediate cases, i.e., $2\leq k\leq n-1$, it is referred to as the Christoffel-Minkowski problem. Guan-Ma \cite{GM}  solved this problem under a continuous homotopy condition, which was removed later by  Sheng-Wang-Trudinger \cite{STW}. In addition to the elliptic method, Chou-Wang \cite{CW00} and Bryan-Ivaki-Scheuer \cite{BIS2023} provided a flow method to solve the above problems.
For a more comprehensive description of the prescribed area measure problem in convex geometry, see \cite{Guan-note} and \cite{Sch}. 

\subsection{Motivation and set up}
In recent years, there has been growing interest in the study of capillary hypersurfaces, including free boundary hypersurfaces, along with their geometry and topology, see, for instance, \cite{FL14, FS16, JWXZ, MWW-GCF, SWX, WW20, WWX22, WX2019, WeX21} and references therein. 
Capillary hypersurfaces originated from physics through the work of Thomas Young. Later,
Young, Liouville, and Gauss formulated it as a variational problem using the so-called wetting energy.  For capillary hypersurfaces, we refer to the elegant book by Finn \cite{Finn}, and also a book by Maggi  \cite{Maggi}. In this paper, with a slight abuse of terminology,
a smooth compact embedded hypersurface in $\ov{{\R}_+^{n+1}}$  whose boundary lies on $\p\ov{{\R}_+^{n+1}}$ is called {\it capillary hypersurface} if
it intersects $\p\ov{{{\R}}_+^{n+1}}$ at a constant contact angle $\theta\in (0,\pi)$. 
   Let $\nu$ be the unit outward normal of $\Sigma$. The contact angle $\theta $ is defined by $$ \cos (\pi -\theta)=\langle \nu, e \rangle,\quad \rm{along~\partial\S},$$
	where $e\coloneqq -E_{n+1}$, $E_{n+1}$ is the $(n+1)$-th unit coordinate vector in $\ov{\RR^{n+1}_+}$ and thus $e$ is the unit outward normal of $\p \ov{\R^{n+1}_+}$.
This means that a capillary hypersurface in this paper is not necessarily a hypersurface of constant mean curvature.
If $\Sigma$ is strictly convex, then it is clear that the image of the Gauss map, namely,  $\nu$, lies on the spherical cap 
\begin{eqnarray*}
		\SS^n_\theta \coloneqq  \left\{ x\in \SS^n \,|\,  \langle x, E_{n+1} \rangle \geq \cos \theta \right\}.
\end{eqnarray*}
Instead of using the usual Gauss map $\nu$, it is more convenient to use the following map
$$\tilde \nu\coloneqq  T \circ \nu: \S \to \C_\theta,$$
where $\C_\theta$ is a spherical cap, defined by
\begin{eqnarray*}
		\C_{\theta }  \coloneqq  \left\{\xi\in \ov{\mathbb{R}^{n+1}_+} \mid |\xi-\cos\theta e|= 1 \right\},
\end{eqnarray*}
	and $T:\SS^n_\theta\to\C_\theta$ is defined by $ T(y)\coloneqq y+\cos\theta e$,  which is a translation in the vertical direction.
 We refer to $\tilde \nu$  as the {\it capillary Gauss map} of $\Sigma$.  It turns out that $\tilde \nu: \S\to \C_\theta$ is a diffeomorphism. Consequently,  one can reparametrize $\S$ using its inverse on $\C_\theta$, see, e.g., \cite[Section 2]{MWWX}. 

In \cite{MWW-AIM}, the authors formulated a capillary Minkowski problem, which concerns the existence of a strictly convex capillary hypersurface $\S\subset \ov{\RR^{n+1}_{+}}$ with a prescribed Gauss-Kronecker curvature on $\C_{\theta}$. We have established the following result in \cite[Theorem 1.1]{MWW-AIM}.

\

\noindent{\bf Theorem A}. 
{\it Let $\theta\in (0,\frac{\pi}{2}]$ and $f\in C^2(\C_\theta)$ be a positive function  satisfying  \eqref{ortho-condition1} below.  Then there exists a $C^{3,\gamma}$ $(\gamma\in(0,1))$ strictly convex capillary hypersurface $\S\subset \ov{\RR^{n+1}_+}$ such that its Gauss-Kronecker curvature $K$ satisfying $$K(\tilde \nu^{-1}(\xi))=
		{f^{-1}(\xi)},$$ for all $\xi\in \C_\theta$. Moreover, $\S$ is unique up to a horizontal translation in $\ov{\RR^{n+1}_+}$.}
  
  \

The capillary Minkowski problem can also be viewed as a prescribed capillary area measure problem, finding a strictly convex capillary hypersurface such that the induced capillary area measure equals a prescribed Borel measure on $\C_{\theta}$. See \cite{MWW-AIM}. The area measures are an important subject in the study of convex bodies, and they are the local versions of quermassintegrals in the Brunn-Minkowski theory.  For more details about the classical Christoffel-Minkowski problem, see \cite[Chapter~8]{Sch}.

The primary objective of this article is to extend our work presented in \cite{MWW-AIM} and study the capillary Christoffel-Minkowski problem or the prescribed  $k$-th capillary area measure problem for the convex body with a capillary boundary in $\ov{\RR^{n+1}_{+}}$.  
 Firstly, we introduce a novel concept of local parallel sets for the capillary convex bodies. We say $\widehat{\S}$ is a \textit{capillary convex body} (a compact convex set with non-empty interior) in $\RR^{n+1}$ if it is a bounded closed region in $\ov{\RR^{n+1}_+}$ enclosed by a strictly convex capillary hypersurface $\S$ and  $\partial \ov{\RR^{n+1}_{+}}$. The class of capillary convex bodies in $\ov{{\mathbb{R}}^{n+1}_+}$ is denoted by $\K_{\theta}$.   Given a Borel set $\beta\subset \C_{\theta}$ and a capillary convex body $\widehat{\S}\subset \K_{\theta}$, we consider the following set :
\begin{eqnarray*}
    B_{s}(\widehat{\S}, \beta)\coloneqq  \left\{Y\in \ov{{\RR}^{n+1}_{+}} \mid  Y=X+t\tilde{\nu}(X),~0<t<s,~\text{for}~X\in \Sigma ~ \text{and} ~\tilde{\nu}(X)\in \beta \right\}.
\end{eqnarray*}
When $\beta=\C_{\theta}$, the set
\begin{eqnarray*}
    A_{s}(\widehat{\Sigma}, \C_{\theta}) \coloneqq \left\{Y\in \ov{{\RR}^{n+1}_{+}} \mid  Y=X+s\tilde{\nu}(X), ~X\in \Sigma~\text{and}~s>0 \right\}
\end{eqnarray*}
is also a strictly convex capillary hypersurface in $\ov{{\RR}^{n+1}_{+}}$ (see, for instance, \cite[Remark 2.17]{MWWX} or \cite[Proposition~2.3]{JWXZ}).
 
Direct calculation yields the following local Steiner-type formula (see also \cite[Remark~2.17]{MWWX} for a global version),
\begin{eqnarray*}
    \text{Vol}(B_{s}(\widehat{\Sigma}, \beta))&=&\sum\limits_{k=0}^{n}\frac{s^{n+1-k}}{n+1-k}\int_{\Sigma \cap \tilde{\nu}^{-1}(\beta)}(1+\cos\theta \<\nu, e\>)\sigma_{n-k}(\kappa)d\mu\notag\\
    &=&\frac{1}{n+1}\sum\limits_{k=0}^{n}s^{n+1-k}\binom{n+1}{k}S_{k}^c(\widehat{\Sigma}, \beta),
\end{eqnarray*}
where $S_k^c(\widehat{\Sigma},\beta)$ is given by
\begin{eqnarray}\label{measure}
    S_{k}^c(\widehat{\Sigma}, \beta)\coloneqq \binom{n}{k}^{-1}\int_{\beta}(\sin^{2}\theta+\cos\theta\<\xi, e\>)\sigma_{k}(r)d{\H}^{n},\quad \text{for}~0\leq k\leq n.
\end{eqnarray}
Here, $\kappa$ and $r$ are the set of the principal curvatures and principal curvature radii of $\Sigma$ respectively, $\sigma_{k}$ is the $k$-th elementary symmetric polynomial function, $d \mathcal{H}^n$ and $d\mu$ are the $n$-dimensional Hausdorff measure on $\C_\theta$
 and  $\Sigma$ respectively. The superscript $c$ indicates the presence of the \textit{capillary} effect in our notation. Moreover, $S_k^c (\widehat{\S}, \C_{\theta})$ $(0\leq k\leq n)$ coincide with the quermassintegrals of convex capillary hypersurface introduced in  \cite[Eq. (1.8)]{WWX22}, see also \cite[Lemma~2.14]{MWWX}. 

 For each capillary convex body $\widehat{\S}$, by \eqref{measure} we introduce  a \textit{$k$-th capillary area measure} on $\C_{\theta}$, which is given  by \begin{eqnarray}\notag
     dS_k^c \coloneqq  \ell\sigma_{k}(\n^{2}h+h\sigma)d\mathcal{H}^{n}, ~~~0\leq k\leq n,
 \end{eqnarray}  
 where $$\ell(\xi) \coloneqq \sin^{2}\theta+\cos\theta\<\xi, e\>,$$ and $h$ is the support function of $\S$. In particular, when $\theta=\frac \pi 2$, $dS_k^c$ coincides with the classical $k$-th area measure for the convex body, see for instance \cite[Section 5.1]{Sch}. The main goal of this paper is to identify a capillary convex body $\widehat{\S}$, such that the  $k$-th capillary area measure is prescribed on $\C_{\theta}$. We formulate the problem as follows.

 \

\noindent	\textbf{Capillary Christoffel-Minkowski problem:} \textit{Given a positive, $C^{2}$ function $f$ on $\C_{\theta}$, when does  there exist a capillary convex body $\widehat{\Sigma}$ in $\ov{\R^{n+1}_+}$ such that
\begin{eqnarray*}
    dS_k^c=\ell fd\mathcal{H}^{n}~?
\end{eqnarray*}}

\
	
When $k=n$, it reduces to the $n$-th capillary area measure and corresponds to the capillary Minkowski problem, which was solved by the authors in \cite{MWW-AIM}. 
For the intermediate case  $2\leq k\leq n-1$, we refer to this problem as the \textit{capillary Christoffel-Minkowski problem}. From Proposition~\ref{pro-2.6} below, the capillary Christoffel-Minkowski problem is equivalent to finding  convex solutions to the following Hessian type equation with Robin boundary value condition,  
\begin{eqnarray}
\label{eq-k}
	\begin{array}{rcll}\vspace{2mm}
		\sigma_{k}(\n^2 h+h\sigma)&=&f,& \quad  \hbox{ in }  \C_{\theta},\\
		\n_\mu h&=&\cot\theta h, &\quad \hbox{ on }  \p \C_\theta,
	\end{array}
 \end{eqnarray}
where   $h$ is an unknown function, $\n h$ and $\n^2h$ are the gradient and the Hessian of $h$ on $\C_\theta$ with respect to the standard spherical metric $\sigma$ on $\C_{\theta}$ respectively  and $\mu$ is the unit outward normal of $\p\C_\theta\subset \C_{\theta}$.
If Eq. \eqref{eq-k} is solvable, the function $f$  satisfies
\begin{eqnarray}\label{ortho-condition1}
    \int_{\C_{\theta}}f(\xi)\<\xi, E_{\alpha}\>d\mathcal{H}^{n}=0,\quad \forall~ 1\leq \alpha\leq n,
\end{eqnarray} (cf. Proposition \ref{pro ortho} for more details)
where  $\{E_i\}_{i=1}^{n+1}$ is the standard coordinate unit vector of $\ov{\RR^{n+1}_+}$. For the capillary Minkowski problem, the necessary condition \eqref{ortho-condition1} is also sufficient, while the situation becomes subtle for the general case $k\neq n$. This phenomenon also occurs in the closed case, see for instance \cite{GM, GMZ, STW}. Before presenting the main result,  we introduce some definitions.
\begin{defn}\label{def1.1}
  We say that a function $f\in C^{2}(\C_{\theta})$ is (strictly) convex if the matrix $\n^{2}f+f\sigma$ is positive semi-definite (respectively, positive definite) on $\C_{\theta}$.
\end{defn}
\begin{defn}
  For $s\in \mathbb{R}$, let $\mathcal{L}_{s}$ be the set of positive $C^{2}$ functions $f$ on $\C_{\theta}$ such that $f$ satisfies \eqref{ortho-condition1} and $f^{s}$ is convex on $\C_{\theta}$, in the sense of Definition \ref{def1.1}. Moreover, we say $f$ is connected to $g$ in $\mathcal{L}_{s}$ if there exists a continuous path $q(t)\in \mathcal{L}_{s}$, $0\leq t\leq 1$, such that $q(0)=g$, $q(1)=f$, and $\n_{\mu}q(t)\geq 0$ on $\partial\C_{\theta}$ for any $t\in[0, 1]$. 
\end{defn}

It is worth noting that, compared to the definition of homotopy given in \cite[Definition 1.1]{GM}, our setting includes an additional requirement: a Neumann boundary condition will be imposed. This condition plays a crucial role in establishing the constant rank theorem for ensuring the convex solutions of Eq. \eqref{eq-k}.

\subsection {Main result}
The main theorem of this paper is as follows.
\begin{thm}\label{main theorem}
  Let $\theta\in (0,\frac{\pi}{2}]$ and $1\leq k\leq n-1$.  Suppose $f\in C^{2}(\C_{\theta})$ is connected to $1$ in $\mathcal{L}_{-\frac{1}{k}}$.  Then there exists a $C^{3, \gamma}$ $(\gamma\in(0,1))$ strictly convex capillary hypersurface $\Sigma$ in $ \ov{{\RR}^{n+1}_{+}}$, such that $\widehat{\S}$ satisfies the prescribed  $k$-th capillary area measure problem. 
  Moreover, $\S$ is unique up to a horizontal translation in $\ov{{\RR}^{n+1}_{+}}$. 
\end{thm}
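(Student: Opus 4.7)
The plan is to attack Theorem \ref{main theorem} by the continuity method applied to the equivalent Robin boundary value problem \eqref{eq-k}. By hypothesis fix a continuous path $q:[0,1]\to\mathcal{L}_{-1/k}$ with $q(0)=1$, $q(1)=f$ and $\n_\mu q(t)\ge 0$ on $\p\C_\theta$, and along it consider the family
\begin{equation*}
\sigma_k(\n^2 h_t+h_t\sigma)=q(t)\ \text{in}\ \C_\theta,\qquad \n_\mu h_t=\cot\theta\, h_t\ \text{on}\ \p\C_\theta.
\end{equation*}
At $t=0$ the constant $h_0\equiv \binom{n}{k}^{-1/k}$ is a strictly convex solution (the support function of a spherical cap capillary body). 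Define
\[
I:=\{t\in[0,1] : \text{the equation with right-hand side } q(t) \text{ admits a }C^{3,\gamma}\text{ strictly convex solution}\},
\]
so that the theorem follows once $I$ is shown to be non-empty, open and closed in $[0,1]$. Openness is the routine step: the linearization of $\sigma_k^{1/k}(\n^2\cdot+\cdot\,\sigma)$ at such a solution is a uniformly elliptic, formally self-adjoint (with respect to the density $\ell$) second-order operator on $\C_\theta$, the Robin condition is a regular oblique boundary operator, and its kernel is spanned by the horizontal coordinate functions $\<\xi,E_\a\>$ for $1\le\a\le n$ (infinitesimal horizontal translations). The orthogonality \eqref{ortho-condition1} is the corresponding Fredholm solvability condition, so the implicit function theorem in $C^{3,\gamma}$ modulo translations produces neighboring solutions.

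Closedness rests on uniform a priori estimates along $I$ together with preservation of strict convexity. A two-sided $C^0$ bound on $h_t$ follows from a weighted integration by parts on $\C_\theta$, the orthogonality \eqref{ortho-condition1}, positivity of $q(t)$, and the Robin condition. The $C^1$ bound is automatic since $h_t$ is the support function of a convex body of controlled diameter. The $C^2$ bound is the technical centerpiece: following the Guan-Ma / Sheng-Trudinger-Wang scheme one tests an auxiliary function of the largest eigenvalue of $W:=\n^2 h_t+h_t\sigma$, and the Robin condition $\n_\mu h_t=\cot\theta\, h_t$ is exploited through a Reilly / Uraltseva-type boundary barrier to rule out a maximum on $\p\C_\theta$. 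Once $\|h_t\|_{C^2}$ and uniform ellipticity are controlled, the Evans-Krylov theorem and Schauder theory promote the estimate to $C^{3,\gamma}$.

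The step I expect to be the main obstacle is the preservation of strict convexity $W>0$ along the entire path, which is what forces us to work in $\mathcal{L}_{-1/k}$ rather than only under a sub/super-solution pair. For this I would establish a constant rank theorem of Caffarelli-Guan-Ma type adapted to the capillary spherical cap. The interior ingredient is classical: convexity of $q(t)^{-1/k}$, which is precisely the hypothesis $q(t)\in\mathcal{L}_{-1/k}$, makes the rank of $W$ locally constant inside $\C_\theta$. The delicate new point is the boundary analysis on $\p\C_\theta$: applying a Hopf-type lemma to the auxiliary function used in the constant rank argument, in combination with the boundary relations $\n_\mu h_t=\cot\theta\, h_t$ and $\n_\mu q(t)\ge 0$, one shows that a boundary point where the rank of $W$ is minimal would force the rank to drop at an interior point as well, contradicting the interior statement. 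Since $h_0$ is strictly convex, this propagates $W>0$ along the entire path and closes $I$.

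Assembling these facts gives $I=[0,1]$ and hence a strictly convex $C^{3,\gamma}$ solution $h$ at $t=1$, which via the capillary Gauss map $\tilde\nu$ yields the desired capillary hypersurface $\S$. For uniqueness up to horizontal translation, given two strictly convex solutions $h^{(1)},h^{(2)}$ one forms $h_s:=(1-s)h^{(1)}+sh^{(2)}$ and applies a weighted mixed-discriminant / Minkowski inequality on $\C_\theta$ with density $\ell$; integrating by parts, the Robin condition annihilates the boundary contributions, and the equality case forces $h^{(2)}-h^{(1)}$ to lie in the linear span of $\<\xi,E_\a\>$, $1\le\a\le n$, which corresponds exactly to a horizontal translation of $\widehat\S$ in $\ov{\RR^{n+1}_+}$.
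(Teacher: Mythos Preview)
Your overall strategy---continuity method along the path in $\mathcal{L}_{-1/k}$, with openness from Fredholm theory, closedness from a priori estimates plus a constant rank theorem adapted to the boundary---is exactly the paper's approach. However, there are a couple of concrete errors worth flagging.

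First, and most importantly, your starting solution $h_0\equiv\binom{n}{k}^{-1/k}$ is wrong: a constant function has $\nabla_\mu h_0=0$, which violates the Robin condition $\nabla_\mu h_0=\cot\theta\,h_0$ whenever $\theta<\pi/2$. The correct initial solution is $h_0=\binom{n}{k}^{-1/k}\ell$, the (rescaled) support function of the spherical cap $\C_\theta$ itself, which satisfies both the equation and the boundary condition. Without this, your set $I$ is empty at the outset.

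Second, the linearized operator is self-adjoint with respect to the plain measure $d\mathcal{H}^n$, not with respect to the density $\ell$; the boundary terms cancel directly because of the Robin condition and the diagonal structure $W_{\alpha n}=0$ on $\p\C_\theta$.

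Third, your description of the boundary step in the constant rank argument is slightly off. The paper does not argue that a minimum of the rank on the boundary forces an interior drop; rather, it shows directly that $\phi=\sigma_{l+1}(W)$ satisfies $\nabla_\mu\phi\ge 0$ on $\p\C_\theta$ (this is where both $\theta\le\pi/2$ and the hypothesis $\nabla_\mu q(t)\ge 0$ are used), so the strong maximum principle together with the Hopf lemma forces $\phi\equiv 0$ if it vanishes anywhere. The contradiction is then obtained not from the equation alone, but from a capillary Minkowski-type integral identity, which forces $h\equiv 0$. Your sketch has the right ingredients but the logic of the boundary step is not quite as you describe it.
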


\

We conclude the introduction by outlining the strategy for solving  Eq. \eqref{eq-k}. If $\theta=\frac{\pi}{2}$, this problem can be reduced to the closed case using a reflection argument. Hence, we focus on the case $\theta<\frac \pi 2$ in proving Theorem \ref{main theorem}. We will employ the method of continuity.  We need to establish a  priori estimates for solutions to Eq. \eqref{eq-k} and show that the convexity is preserved. To establish the $C^{0}$ estimate, we modify the approach in  \cite[Lemma~3]{CY76}, \cite[Lemma 3.1]{MWW-AIM} and combine it with a general form of capillary Alexandrov-Fenchel inequalities for capillary hypersurfaces presented in \cite[Theorem~1.2]{MWWX} for $\theta\in(0, \pi)$.  Hence our $C^0$-estimate for \eqref{eq-k} holds for all contact angle $\theta\in(0, \pi)$, see Lemma \ref{c0 est}. The $C^1$ estimate follows from the strict convexity of $h$ and the $C^0$ estimate. In contrast to the closed case considered in \cite[Proposition 3.2]{GM}, our global $C^2$ estimate depends on the gradient estimates, due to the specific choice of test functions used to derive the boundary $C^2$ estimate. The choice of the test functions is motivated by the works of \cite{LTU, MQ} and \cite{MWW-AIM}.
For the preservation of the convexity, we apply the celebrated result of Guan-Ma \cite{GM} on the Constant Rank Theorem.
It is worth noting that the assumption $\theta\in (0, \frac{\pi}{2})$ is crucial not only for deriving the $C^2$-estimate, but also for showing the preservation of the convexity.
A similar assumption was employed in our previous works \cite{MWW-Weingarten} and \cite{MWW-Lp}, concerning the problems of convex capillary surfaces with prescribed $k$-th Weingarten curvature and prescribed capillary $L_p$-surface area measures for convex capillary bodies, respectively.  Nevertheless, we expect that the above results still hold for $\theta > \frac{\pi}{2}$.

\

\textbf{The rest of the article is structured as follows.} 
	In Section \ref{sec2}, we list some properties of elementary symmetric functions and introduce some notations and basic facts about capillary hypersurfaces. Moreover, we prove the necessary condition \eqref{ortho-condition1} for the capillary Christoffel-Minkowski problem. 
	In Section \ref{sec3}, we establish a priori estimates for the solution to Eq. \eqref{eq-k}. In Section \ref{sec-convex}, under certain assumptions on $f$, we investigate the convexity of the solution to Eq. \eqref{eq-k}.
 In the final section, by using the method of continuity, we prove Theorem \ref{main theorem}.
	\vspace{.2cm}

\section{Preliminaries}\label{sec2}
\subsection{Basic properties of elementary symmetric functions.}
In this subsection, we recall the definitions and some basic properties
of elementary symmetric functions, which can be found in
\cite{L96}.

\begin{defn}
For any $k = 1, 2,\ldots, n,$ we set
\begin{eqnarray*} 
\sigma_k(\lambda) = \sum _{1 \le i_1 < i_2 <\cdots<i_k\leq n}\lambda_{i_1}\lambda_{i_2}\cdots\lambda_{i_k},
 \qquad \text {for any} \quad\lambda=(\lambda_1,\ldots,\lambda_n)\in \mathbb{R}^{n}.
\end{eqnarray*}
For convenience, let $\sigma_0=1$ and $\sigma_k =0$ for $k>n$. 
\end{defn}
Denote by $\sigma _k (\lambda \left| i \right.)$ the symmetric
function with $\lambda_i = 0$ and $\sigma _k (\lambda \left| ij
\right.)$ the symmetric function with $\lambda_i =\lambda_j = 0$. 
Recall that the classic G{\aa}rding's cone is defined as
\begin{eqnarray*}
\Gamma_k  =  \left\{ \lambda  \in \mathbb{R}^n \mid \sigma _i (\lambda ) >
0,\forall 1 \le i \le k \right\}.
\end{eqnarray*}
The following properties of elementary symmetric functions are well known.
\begin{lem}\label{prop2.2}
Let $\lambda=(\lambda_1,\ldots,\lambda_n)\in\mathbb{R}^n$ and $k
=1, \ldots, n$. Then
\begin{enumerate}
    \item $\sigma_k(\lambda)=\sigma_k(\lambda|i)+\lambda_i\sigma_{k-1}(\lambda|i), \quad \forall \,1\leq i\leq n$.
    \item $\sum\limits_{i=1}^n \lambda_i\sigma_{k-1}(\lambda|i)=k\sigma_{k}(\lambda)$.
    \item $\sum\limits_{i=1}^n \sigma_{k}(\lambda|i)=(n-k)\sigma_{k}(\lambda)$.
    \item If $\lambda \in \Gamma_k$ and $\lambda_1 \geq  \lambda_2 \geq \cdots \geq \lambda_n$, then
\begin{eqnarray*} 
\sigma_{k-1} (\lambda|n) \geq\sigma_{k-1} (\lambda|n-1) \geq \cdots \geq \sigma_{k-1} (\lambda|1) >0.
\end{eqnarray*}
\end{enumerate}

\end{lem}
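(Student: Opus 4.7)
The statement bundles four assertions about elementary symmetric functions, so my plan treats them individually, with the first three essentially combinatorial and the last requiring a genuine monotonicity argument.

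For part (1), I would argue purely from the definition of $\sigma_k$: every $k$-subset $\{i_1<\cdots<i_k\}$ of $\{1,\dots,n\}$ either omits the index $i$, contributing to $\sigma_k(\lambda|i)$, or contains $i$, in which case factoring out $\lambda_i$ leaves a $(k-1)$-subset of $\{1,\dots,n\}\setminus\{i\}$ and thus contributes to $\lambda_i\sigma_{k-1}(\lambda|i)$. Parts (2) and (3) are then double-counting statements. For (3), the subset $\{i_1<\cdots<i_k\}$ appears in $\sigma_k(\lambda|i)$ iff $i$ lies outside it, and there are exactly $n-k$ such indices $i$, yielding the factor $n-k$. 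For (2), either I sum identity (1) over $i$ and plug in (3), or I observe directly that in $\sum_i\lambda_i\sigma_{k-1}(\lambda|i)$ each product $\lambda_{i_1}\cdots\lambda_{i_k}$ arises $k$ times, once for each choice of singled-out index $i\in\{i_1,\dots,i_k\}$.

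Part (4) is the substantive point. The plan is to prove the monotonicity via the single pairwise identity
\begin{equation*}
\sigma_{k-1}(\lambda|i)-\sigma_{k-1}(\lambda|j)=(\lambda_j-\lambda_i)\,\sigma_{k-2}(\lambda|ij),\qquad i\neq j,
\end{equation*}
obtained by applying (1) twice to expand both $\sigma_{k-1}(\lambda|i)$ and $\sigma_{k-1}(\lambda|j)$ around the second index and subtracting, so the common term $\sigma_{k-1}(\lambda|ij)$ cancels. Assuming $\lambda_1\ge\cdots\ge\lambda_n$, taking $i<j$ gives $\lambda_j-\lambda_i\le 0$, so if I can show $\sigma_{k-2}(\lambda|ij)\ge 0$ for all $i<j$, the required chain $\sigma_{k-1}(\lambda|n)\ge\cdots\ge\sigma_{k-1}(\lambda|1)$ follows immediately.

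The main obstacle, then, is the nonnegativity of $\sigma_{k-2}(\lambda|ij)$ and the final strict positivity of $\sigma_{k-1}(\lambda|1)$ — this is where the G{\aa}rding cone assumption $\lambda\in\Gamma_k$ is essential. My plan is to proceed by induction on $k$: the case $k=1$ is trivial since $\sigma_0\equiv 1$, and for the inductive step I would show that if $\lambda\in\Gamma_k$ with $\lambda_1\ge\cdots\ge\lambda_n$, then the truncated vector $(\lambda|n)\in\mathbb{R}^{n-1}$ still lies in the appropriate G{\aa}rding cone $\Gamma_{k-1}$ of dimension $n-1$ (which one verifies by combining identity (1) with the chain of positive principal quantities defining $\Gamma_k$, noting that $\lambda_n$ is the smallest coordinate). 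Iterating this reduction lets me deduce $\sigma_{k-2}(\lambda|ij)>0$ for any pair, and in particular $\sigma_{k-1}(\lambda|1)>0$, closing the argument. Alternatively, I may simply quote the standard proof from \cite{L96} for this step, since the lemma is presented as a review of known facts.
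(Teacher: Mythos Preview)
The paper itself does not prove this lemma; it simply refers the reader to \cite[Theorem~1.4, Lemma~2.10]{Spruck} for all the preliminary lemmas in this subsection. Your proposal therefore already goes further than the paper by supplying actual arguments. The subset-counting proofs for (1)--(3) are correct and standard. For (4), the difference identity $\sigma_{k-1}(\lambda|i)-\sigma_{k-1}(\lambda|j)=(\lambda_j-\lambda_i)\,\sigma_{k-2}(\lambda|ij)$ and the inductive plan are exactly the right ingredients; the only place your sketch is compressed is the claim that ``iterating'' the single reduction $(\lambda|n)\in\Gamma_{k-1}$ already yields $\sigma_{k-2}(\lambda|ij)>0$ for \emph{every} pair $(i,j)$, not just those containing $n$ --- this step typically needs a little extra input (e.g.\ Newton's inequality, G{\aa}rding's hyperbolicity argument, or a strengthened induction showing that removing the smallest coordinate keeps $\lambda$ in $\Gamma_k$ in one lower dimension). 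Since you explicitly allow for citing \cite{L96} at this point, and since citation is exactly what the paper does, your proposal is entirely adequate and in fact more detailed than the paper's own treatment.
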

\begin{defn}
    Let $A=\{A_{ij}\}$ be an $n\times n$ symmetric matrix, for $k=1,2,\cdots, n$, we define
    \begin{eqnarray}\label{k-ele}
        \sigma_{k}(A)\coloneqq \sigma_{k}(\lambda(A))=\sum\limits_{1\leq i_{1}<i_{2}\cdots< i_{k}\leq n}\lambda_{i_{1}}(A)\lambda_{i_{2}}(A)\cdots \lambda_{i_{k}}(A),
    \end{eqnarray}
    where $\lambda(A)=(\lambda_{1}(A), \lambda_{2}(A), \cdots, \lambda_{n}(A))$ are the eigenvalues of the symmetric matrix $A$. Alternatively, \eqref{k-ele} is also the sum of its $k\times k$ principal minors.
\end{defn}
We also denote by $\sigma _m (A \left|
i \right.)$ the symmetric function with $A$ deleting the $i$-row and
$i$-column and $\sigma _m (A \left| ij \right.)$ the symmetric
function with $A$ deleting the $i,j$-rows and $i,j$-columns. Then
we have the following identities.
\begin{lem}
Suppose that  $A=\{A_{ij}\}$ is diagonal, and $1\leq k\leq n$,
then
\begin{eqnarray*}
\sigma_{k-1}^{ij}(A)= \begin{cases}
\sigma _{k- 1} (A\left| i \right.), &\text{if } i = j \\
0, &\text{if } i \ne j
\end{cases},
\end{eqnarray*}
where $\sigma_{k-1}^{ij}(A)=\frac{{\partial \sigma _k (A)}} {{\partial A_{ij} }}$.
\end{lem}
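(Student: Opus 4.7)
The plan is to use the representation of $\sigma_k(A)$ as the sum of all $k\times k$ principal minors of $A$, namely $\sigma_k(A)=\sum_{|S|=k}\det(A_S)$, where $A_S$ denotes the submatrix obtained by selecting the rows and columns indexed by $S\subset\{1,\dots,n\}$. This representation is essentially the alternative formulation already mentioned in the text just after display \eqref{k-ele}, and it lets me reduce the problem to standard cofactor calculus with $A_{ij}$ and $A_{ji}$ treated as independent coordinates on the space of $n\times n$ matrices.

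First I would note that $\partial \det(A_S)/\partial A_{ij}=0$ unless $\{i,j\}\subset S$, in which case it equals the $(i,j)$-cofactor $C_{ij}(A_S)$. Summing over principal index sets $S$ of size $k$ therefore yields
\[
\sigma_{k-1}^{ij}(A)=\frac{\partial \sigma_k(A)}{\partial A_{ij}}=\sum_{\substack{|S|=k\\ \{i,j\}\subset S}}C_{ij}(A_S).
\]
For the off-diagonal case $i\neq j$, evaluating at a diagonal $A$ makes each $A_S$ diagonal, so the $(k{-}1)\times(k{-}1)$ matrix obtained by deleting row $i$ and column $j$ has a zero row (the one originating from row $j$ of $A_S$, whose only nonzero entry $A_{jj}$ lies in the deleted column $j$). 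Hence $C_{ij}(A_S)=0$ for every $S$, and $\sigma_{k-1}^{ij}(A)=0$, which proves the second case.

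For the diagonal case $i=j$, the same reasoning gives $C_{ii}(A_S)=\prod_{l\in S\setminus\{i\}}A_{ll}$ at a diagonal $A$. Summing over all $S$ of size $k$ containing $i$ yields
\[
\sigma_{k-1}^{ii}(A)=\sum_{\substack{|T|=k-1\\ i\notin T}}\prod_{l\in T}A_{ll}=\sigma_{k-1}(\lambda\,|\,i)=\sigma_{k-1}(A\,|\,i),
\]
by the very definition of $\sigma_{k-1}(A|i)$ recalled before Lemma~\ref{prop2.2}. There is no real obstacle here; the only delicate point worth flagging is the convention that off-diagonal entries $A_{ij}$ and $A_{ji}$ are treated as independent variables when computing partial derivatives (so that $\partial \sigma_k/\partial A_{ij}$ is unambiguous), which is the standard choice in this literature and is what makes the identity look as clean as stated.
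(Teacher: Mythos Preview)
Your proof is correct. The paper does not actually supply a proof of this lemma; it simply refers the reader to \cite[Theorem~1.4, Lemma~2.10]{Spruck} for the proofs of this and the surrounding lemmas. Your argument, using the principal-minor expansion $\sigma_k(A)=\sum_{|S|=k}\det(A_S)$ together with cofactor calculus, is a clean self-contained verification and matches the standard approach one finds in such references. The point you flag about treating $A_{ij}$ and $A_{ji}$ as independent coordinates is the right convention to note and is indeed what makes the formula come out without an extra factor of~$2$ on the off-diagonal.
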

\begin{lem}\label{prop2.3}
For $\lambda \in \Gamma_k$ and $k > l \geq 0$, $ r > s \geq 0$, $k \geq r$, $l \geq s$, the generalized Newton-Maclaurin inequality is 
\begin{eqnarray*}
\left(\frac{{\sigma _k (\lambda )}/{C_n^k }}{{\sigma _l (\lambda )}/{C_n^l }}\right)^{\frac{1}{k-l}}
\le \left(\frac{{\sigma _r (\lambda )}/{C_n^r }}{{\sigma _s (\lambda )}/{C_n^s }}\right)^{\frac{1}{r-s}}.
\end{eqnarray*}
Equality holds if and only if $\lambda_1 = \lambda_2 = \cdots =\lambda_n >0$. 
\end{lem}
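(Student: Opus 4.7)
The plan is to reduce the claimed chain of inequalities to the monotonicity of a single quotient sequence, and then express each ratio on both sides as a geometric mean of that sequence. Normalize by setting $p_j := \sigma_j(\lambda)/C_n^j$, so the statement reads
\[
\left(\frac{p_k}{p_l}\right)^{1/(k-l)} \leq \left(\frac{p_r}{p_s}\right)^{1/(r-s)}.
\]
For $\lambda \in \Gamma_k$ all of $p_0, p_1, \ldots, p_k$ are positive, so I introduce $a_j := p_j/p_{j-1}$ for $1 \leq j \leq k$, and a telescoping product converts each side into a geometric mean,
\[
\left(\frac{p_k}{p_l}\right)^{1/(k-l)} = \Bigl(\prod_{j=l+1}^{k} a_j\Bigr)^{1/(k-l)},
\]
and analogously with $(r,s)$ in place of $(k,l)$. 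The inequality therefore reduces to comparing the geometric means of two intervals of the sequence $(a_j)$, both of which lie in $[1,k]$ by virtue of $r \leq k$.

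The analytic input is Newton's log-concavity $p_j^2 \geq p_{j-1}\,p_{j+1}$, valid for all real $\lambda$, with equality if and only if $\lambda_1 = \cdots = \lambda_n$. The standard proof applies Rolle's theorem $n-j-1$ times to the real-rooted polynomial $\prod_i(x-\lambda_i)$: differentiation preserves realness of roots, so one ends with a polynomial of degree $j+1$ whose every quadratic factor has nonnegative discriminant, and that discriminant condition rearranges into Newton's inequality (the coefficients being explicit multiples of $\sigma_{j-1},\sigma_j,\sigma_{j+1}$). Dividing by $p_{j-1}p_j > 0$ on $\Gamma_k$, Newton's inequality is equivalent to $a_{j+1} \leq a_j$, so $(a_j)_{1 \leq j \leq k}$ is a positive, non-increasing sequence.

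Taking logarithms $b_j := \log a_j$, the claim becomes the comparison $\mathrm{AM}(b_{l+1},\ldots,b_k) \leq \mathrm{AM}(b_{s+1},\ldots,b_r)$ for the non-increasing sequence $(b_j)$ under the hypotheses $s \leq l$ and $r \leq k$. I would perform this comparison in two shifts through the intermediate interval $[s+1,k]$. First, observe that
\[
\mathrm{AM}(b_{s+1},\ldots,b_k) = \tfrac{l-s}{k-s}\,\mathrm{AM}(b_{s+1},\ldots,b_l) + \tfrac{k-l}{k-s}\,\mathrm{AM}(b_{l+1},\ldots,b_k),
\]
and monotonicity gives $\mathrm{AM}(b_{s+1},\ldots,b_l) \geq \mathrm{AM}(b_{l+1},\ldots,b_k)$, so the combined average dominates the second piece: $\mathrm{AM}(b_{l+1},\ldots,b_k) \leq \mathrm{AM}(b_{s+1},\ldots,b_k)$. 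Second, the analogous convex decomposition on $[s+1,k] = [s+1,r] \cup [r+1,k]$ together with $\mathrm{AM}(b_{s+1},\ldots,b_r) \geq \mathrm{AM}(b_{r+1},\ldots,b_k)$ yields $\mathrm{AM}(b_{s+1},\ldots,b_k) \leq \mathrm{AM}(b_{s+1},\ldots,b_r)$. Chaining the two inequalities gives the desired comparison.

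For the equality case, equality throughout forces $(a_j)$ to be constant on $[s+1,k]$, so $p_j^2 = p_{j-1}p_{j+1}$ at (at least one pair of) consecutive indices; Newton's equality clause then forces $\lambda_1 = \cdots = \lambda_n$, and this common value must be positive because $\lambda \in \Gamma_k$ implies $\sigma_1(\lambda) > 0$. The main technical hurdle is Newton's log-concavity itself, but it is a classical fact available in \cite{L96}; the rest of the argument is a careful orchestration of the reduction to geometric means and the two interval-shifting steps.
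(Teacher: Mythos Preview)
Your proof is correct. The paper itself does not prove this lemma: immediately after stating it, the authors defer to \cite[Theorem~1.4, Lemma~2.10]{Spruck}. Your route---reducing via Newton's log-concavity to the monotonicity of $a_j=p_j/p_{j-1}$, and then comparing the arithmetic means of $b_j=\log a_j$ over $[l+1,k]$ and $[s+1,r]$ through the common super-interval $[s+1,k]$---is the standard one and is essentially what the cited references do. One minor caveat on the equality clause: when $(k,l)=(r,s)$ the two sides coincide identically and nothing can be concluded about $\lambda$; the ``only if'' direction should be read under the tacit assumption $(k,l)\neq(r,s)$, which is an artifact of the statement rather than a gap in your argument. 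In every non-degenerate case your reasoning does force Newton's equality $p_j^2=p_{j-1}p_{j+1}$ at some index with $p_j>0$, and the classical equality criterion then gives $\lambda_1=\cdots=\lambda_n$.
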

For proofs of the above lemmas, see, e.g., \cite[Theorem 1.4, Lemma 2.10]{Spruck}.

\subsection{Some properties of strictly convex capillary hypersurfaces in half-space}
Let $\S\subset \ov{{\RR}^{n+1}_+}$ be  a smooth, properly embedded  capillary hypersurface in $\ov{{\RR}^{n+1}_+}$ with boundary $\p \Sigma \subset \p \ov{\R^{n+1}_+}$.
	Let $\widehat{\S}$ denote the domain enclosed by $\Sigma$ and  the hyperplane $\p \ov{{\R}^{n+1}_+}$, and  let $\widehat{\p \Sigma}$ denote the domain in  $\p \ov{{\R}^{n+1}_+}$ enclosed  by $\p \S$. 
Let $\mu$ be the unit outward co-normal of $\p\S$ in $\S$ and  $\overline{\nu}$ the unit outward normal to $\partial\Sigma$ in $\partial\ov{\mathbb{R}^{n+1}_+}$ 
	such that $\{\nu, \mu\}$ and $\{\overline{\nu},e\}$ have the same orientation in the  normal bundle of $\partial\Sigma\subset\ov{{\R}^{n+1}_+}$. 
	It  follows  
	\begin{eqnarray}\label{co-normal bundle}
		\begin{array} {rcl}
			e &=&\sin\theta \mu-\cos\theta \nu,
			\\
			\overline{\nu} &=&\cos\theta \mu+\sin\theta \nu.
		\end{array}
	\end{eqnarray}
Assume further that $\S\subset \ov{ \RR^{n+1}_+}$ is strictly convex, we parametrize the capillary hypersurface $\S$ using  the inverse capillary Gauss map (refer to \cite[Section~2]{MWWX}), which is given by the map $X:\C_{\theta}\to \S$
	\begin{eqnarray*}
		X(\xi)\coloneqq \tilde \nu ^{-1} (\xi) =\nu^{-1}\circ T^{-1}(\xi)=\nu^{-1}(\xi-\cos\theta e).
	\end{eqnarray*}
	The usual support function of $\Sigma$ is  defined by 
	\begin{eqnarray*}
		h(X)\coloneqq \<X,\nu(X)\>.
	\end{eqnarray*}
	The support function $h$ is equivalently defined  by the unique decomposition
	$$X= h(X) \nu(X)+ t,  \qquad t\in T_X\Sigma.$$ 
 Without causing confusion,  we also view the support function defined on $\C_{\theta }$ by our parametrization, namely
	\begin{eqnarray*}
		h(\xi)\coloneqq \<X(\xi),\nu(X(\xi ))\>=\<X(\xi), T^{-1} (\xi )\>=
		\< \tilde \nu ^{-1}(\xi)  , \xi -\cos\theta e\>.
	\end{eqnarray*}
It is clear that  the capillary Gauss map for $ \C_\theta$  is the identity map 
from $\C_\theta \to \C_\theta$ and the support function of $\C_{\theta}$ is
\begin{eqnarray*}
		\ell(\xi)=\<\xi  , \xi -\cos\theta e\>=
		\sin ^2\theta + \cos\theta\< \xi, e\>.
	\end{eqnarray*} Moreover, it is easy to see that $\ell$ satisfies Eq. \eqref{eq-k} with $f\equiv \binom{n}{k}$. 
The following lemma was established in \cite[Lemma~2.4]{MWWX}, and we include it here for later use.
 \begin{lem}\label{parametrization}
 For the parametrization  $X:\C_{\theta}\to \S$, there hold
	\begin{itemize}
	\item[(1)] $X(\xi)=\nabla h(\xi) +h(\xi)T^{-1}(\xi)$ for $\xi\in \C_\theta$.
    
		 \item[(2)]  $\n_\mu h=\cot \theta h$ along  $\p \C_{\theta}$, where $\mu$ is the outward unit co-normal to $\p \C_{\theta}\subset \C_{\theta}$.
         
	 \item[(3)] The principal radii of $\S$ at $X(\xi)$ are given by the eigenvalues of $$W\coloneqq (\n^2 h+ h\s)$$  with respect to the metric $\s$. In particular, $W>0$ on $\C_{\theta}$.
	 \end{itemize}	
     \end{lem}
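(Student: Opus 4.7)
The strategy is to derive all three assertions from the single identity $h(\xi) = \langle X(\xi), T^{-1}(\xi)\rangle$, combined with two structural facts: $T^{-1}(\xi) = \xi - \cos\theta\, e$ is simultaneously the outward unit normal to $\C_\theta \subset \RR^{n+1}$ at $\xi$ and the outward unit normal $\nu(X(\xi))$ to $\Sigma$ at $X(\xi)$; and $\C_\theta$ is a unit sphere in $\RR^{n+1}$ centered at $\cos\theta\, e$, so the ambient flat connection $\bar\nabla$ and the intrinsic connection $\nabla$ are related by the standard Gauss formula for a round sphere. For (1), I would differentiate $h$ along an arbitrary $V \in T_\xi\C_\theta$: since $dX(V) \in T_X\Sigma = \nu^\perp$ and $dT^{-1}(V) = V$ (as $T$ is a translation), one obtains $V(h) = \langle X, V\rangle$, so the intrinsic gradient $\nabla h$ equals the tangential component of $X$ on $\C_\theta$. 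The normal component is $\langle X, \nu\rangle\, \nu = h\, T^{-1}(\xi)$, giving $X = \nabla h + h\, T^{-1}(\xi)$.

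For (2), the task is to identify the outward co-normal $\mu$ of $\partial\C_\theta$ in $\C_\theta$ and then evaluate $\nabla_\mu h = \langle X, \mu\rangle$ via (1). At $\xi \in \partial\C_\theta$ one has $\langle \xi, e\rangle = 0$, so $X(\xi) \in \partial\ov{\RR^{n+1}_+}$ gives $\langle X, e\rangle = 0$, and $\langle \nu, e\rangle = -\cos\theta$ (matching the capillary angle condition). Since $\partial\C_\theta$ lies in the hyperplane $\{\langle\cdot, e\rangle = 0\}$, the vector $e$ is orthogonal to $T_\xi(\partial\C_\theta)$ and hence lies in $\mathrm{span}\{\nu, \mu\}$, with decomposition $e = -\cos\theta\, \nu + \sin\theta\, \mu$ (the sign of the $\mu$-component being fixed by outwardness). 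Solving for $\mu$ and using $\langle X, e\rangle = 0$, $\langle X, \nu\rangle = h$ yields
\[
\nabla_\mu h = \langle X, \mu\rangle = \frac{1}{\sin\theta}\,\langle X,\, e + \cos\theta\, \nu\rangle = \cot\theta\cdot h.
\]

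For (3), I would differentiate the identity in (1) along $V \in T_\xi\C_\theta$ using $\bar\nabla$. Since $\nu = \xi - \cos\theta\, e$, it follows that $\bar\nabla_V \nu = V$; and the Gauss formula for the unit sphere $\C_\theta$ gives $\bar\nabla_V(\nabla h) = \nabla_V \nabla h - \langle V, \nabla h\rangle\, \nu$. Substituting both in $\bar\nabla_V X = \bar\nabla_V(\nabla h) + V(h)\, \nu + h\, \bar\nabla_V \nu$ and noting $V(h) = \langle V, \nabla h\rangle$, the two normal terms cancel, leaving $dX(V) = \nabla_V\nabla h + h\, V = W(V)$, where $W = \nabla^2 h + h\sigma$ is regarded as a $(1,1)$-tensor via $\sigma$. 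Since $dX$ is the differential of the inverse capillary Gauss map, its eigenvalues at $\xi$ are precisely the principal radii of curvature of $\Sigma$ at $X(\xi)$; strict convexity of $\Sigma$ then forces $W > 0$ throughout $\C_\theta$. The most delicate step, I expect, is the sign bookkeeping in (2), namely confirming that $\mu = (e + \cos\theta\, \nu)/\sin\theta$ is the outward co-normal; once this is settled, (1) amounts to unwinding the definition of $h$, and (3) is a routine application of the spherical Gauss formula.
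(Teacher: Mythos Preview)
Your argument is correct in all three parts. The paper itself does not prove this lemma but merely cites \cite[Lemma~2.4]{MWWX}; your proposal supplies exactly the standard derivation one would expect there, so there is nothing to compare against within this paper. The only point I would tighten is the ``outwardness'' check in (2): your formula $\mu=(e+\cos\theta\,\nu)/\sin\theta$ gives $\langle\mu,e\rangle=\sin\theta>0$, which indeed points out of $\C_\theta\subset\ov{\RR^{n+1}_+}$, and this is consistent with the relation $e=\sin\theta\,\mu-\cos\theta\,\nu$ recorded in \eqref{co-normal bundle}.
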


From \cite[Proposition~2.6]{MWWX}, there is a one-to-one correspondence between a strictly convex capillary hypersurface and a strictly convex function $h\in C^{2}(\C_{\theta})$ satisfying a Robin boundary condition as in Lemma \ref{parametrization} (2). It then follows readily that
	\begin{prop}\label{pro-2.6}
 The capillary  Christoffel-Minkowski problem 
		is equivalent to finding a strictly convex solution to  Eq.  \eqref{eq-k}.\end{prop}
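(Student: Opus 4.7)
The plan is to prove the equivalence by unwinding definitions in both directions, using Lemma \ref{parametrization} together with the one-to-one correspondence between strictly convex capillary hypersurfaces and their support functions established in \cite[Proposition~2.6]{MWWX}.

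For the forward direction, suppose $\widehat{\S}\in \K_\theta$ solves the prescribed $k$-th capillary area measure problem, $dS_k^c=\ell f d\H^n$. The support function $h$ of $\S$, viewed on $\C_\theta$ through the inverse capillary Gauss map $X$, is $C^2$ and by Lemma \ref{parametrization}(3) the matrix $W=\n^2 h+h\s$ is positive definite, so $h$ is strictly convex in the sense of Definition \ref{def1.1}. By the very definition of the $k$-th capillary area measure, $dS_k^c=\ell\,\s_k(\n^2 h+h\s)\, d\H^n$, so the measure equation becomes $\ell\,\s_k(\n^2 h+h\s)=\ell f$ pointwise on $\C_\theta$. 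A short check using $\xi=y+\cos\theta\, e$ with $y\in\SS^n_\theta$ shows that $\ell(\xi)=\sin^2\theta+\cos\theta\langle\xi,e\rangle\geq 1-\cos\theta>0$ throughout $\C_\theta$, hence $\ell$ can be cancelled and one obtains $\s_k(\n^2 h+h\s)=f$. The Robin condition $\n_\mu h=\cot\theta\, h$ on $\partial\C_\theta$ is exactly Lemma \ref{parametrization}(2). Therefore $h$ is a strictly convex $C^2$ solution to Eq.~\eqref{eq-k}.

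For the converse, let $h\in C^2(\C_\theta)$ be a strictly convex solution of Eq.~\eqref{eq-k}. The Robin condition and the strict convexity of $h$ are precisely the hypotheses of \cite[Proposition~2.6]{MWWX}, which therefore produces a unique strictly convex capillary hypersurface $\S\subset\ov{\R^{n+1}_+}$ whose support function, pulled back to $\C_\theta$, equals $h$. By Lemma \ref{parametrization}(3) the principal curvature radii of $\S$ are the eigenvalues of $W=\n^2 h+h\s$, so the induced $k$-th capillary area measure of $\widehat{\S}$ is $dS_k^c=\ell\,\s_k(W)\,d\H^n=\ell f\,d\H^n$, meaning $\widehat{\S}$ solves the capillary Christoffel-Minkowski problem.

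The proposition is thus essentially a bookkeeping statement: the substantive geometric content, namely the reconstruction of a strictly convex capillary hypersurface from its support function and the identification of the principal radii with the eigenvalues of $\n^2 h+h\s$, is already in hand. I do not anticipate a genuine obstacle; the only subtlety worth stating explicitly is the positivity of $\ell$, which allows the area-measure identity to be reduced to the pointwise Hessian-type equation.
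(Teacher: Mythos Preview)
Your proof is correct and follows exactly the route the paper indicates: the paper itself gives no detailed argument, noting only that the proposition ``follows readily'' from the one-to-one correspondence in \cite[Proposition~2.6]{MWWX} together with Lemma~\ref{parametrization}, and you have supplied precisely those details. The one cosmetic point is that your lower bound $\ell\geq 1-\cos\theta$ holds only for $\theta\in(0,\tfrac{\pi}{2}]$ (for $\theta>\tfrac{\pi}{2}$ the sharp lower bound is $\sin^2\theta$), but either way $\ell>0$ on $\C_\theta$, so the cancellation step is valid in the full range $\theta\in(0,\pi)$.
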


At the end of this section, we prove a necessary condition \eqref{ortho-condition1} for the solvability of the capillary Christoffel-Minkowski problem.
	\begin{prop}\label{pro ortho}
		If  Eq. \eqref{eq-k} is solvable, then $f$ satisfies
  
		\begin{eqnarray}\label{ortho-condition}
    \int_{\C_{\theta}}f(\xi)\<\xi, E_{\alpha}\>d\H^{n}=0,\quad \forall~ 1\leq \alpha\leq n.
\end{eqnarray}		
	\end{prop}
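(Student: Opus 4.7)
The plan is to exploit the fact that, for $1\le\alpha\le n$, the horizontal coordinate functions $\phi_\alpha(\xi)\coloneqq\langle\xi,E_\alpha\rangle$ lie in the kernel of the linear operator $\nabla^2(\cdot)+(\cdot)\sigma$ on $\mathcal{C}_\theta$ and satisfy exactly the same Robin boundary condition as any capillary support function $h$. Combined with the divergence-free property of the Newton tensor $\sigma_k^{ij}(W)$, where $W=\nabla^2 h+h\sigma$ is Codazzi on the sphere, this will let me rewrite $\int f\phi_\alpha$ as a pure boundary integral and then kill it by a compatibility argument specific to the capillary setup.

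First I would record two preliminary identities for $\phi_\alpha$. Writing $\phi_\alpha(\xi)=\langle\xi-\cos\theta\,e,E_\alpha\rangle$ (using $\langle e,E_\alpha\rangle=0$) and recalling that $\mathcal{C}_\theta$ is the unit sphere centered at $\cos\theta\,e$, one sees that $\phi_\alpha$ is a first-order spherical harmonic on $\mathcal{C}_\theta$, so
\begin{equation*}
\nabla^2\phi_\alpha+\phi_\alpha\sigma\equiv 0\quad\text{on }\mathcal{C}_\theta.
\end{equation*}
Using the explicit form $\mu=\cot\theta\,\xi+\sin\theta\,e$ of the outward conormal along $\partial\mathcal{C}_\theta$, a direct computation gives $\nabla_\mu\phi_\alpha=\langle E_\alpha,\mu\rangle=\cot\theta\,\phi_\alpha$, so $\phi_\alpha$ satisfies precisely the Robin condition of Lemma~\ref{parametrization}(2).

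Next I would exploit the Euler identity $k\sigma_k(W)=\sigma_k^{ij}(W)W_{ij}$ together with the vanishing identity above to write
\begin{equation*}
k\int_{\mathcal{C}_\theta} f\phi_\alpha\,d\mathcal{H}^n=\int_{\mathcal{C}_\theta}\sigma_k^{ij}(W)\bigl[(\nabla_i\nabla_j h+h\sigma_{ij})\phi_\alpha-(\nabla_i\nabla_j\phi_\alpha+\phi_\alpha\sigma_{ij})h\bigr]\,d\mathcal{H}^n.
\end{equation*}
The $h\phi_\alpha\sigma_{ij}$ cross-terms cancel, and integrating by parts once on each Hessian term using $\nabla_i\sigma_k^{ij}(W)=0$ (which follows because $W$ is a Codazzi tensor on $\mathbb{S}^n$) together with the symmetry of $\sigma_k^{ij}$, the two interior gradient terms cancel by symmetry, leaving
\begin{equation*}
k\int_{\mathcal{C}_\theta} f\phi_\alpha\,d\mathcal{H}^n=\int_{\partial\mathcal{C}_\theta}\sigma_k^{ij}(W)\mu_i\bigl[\phi_\alpha\nabla_j h-h\nabla_j\phi_\alpha\bigr]\,d\mathcal{H}^{n-1}.
\end{equation*}

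The last step, which I expect to be the most delicate, is to show the boundary integrand is pointwise zero. Decomposing $\nabla h$ and $\nabla\phi_\alpha$ into their $\mu$ and tangential parts along $\partial\mathcal{C}_\theta$, the matching Robin conditions cancel the $\mu$-components of the Wronskian-type expression $\phi_\alpha\nabla h-h\nabla\phi_\alpha$. For the tangential part, the key structural observation is that $\mu$ is a principal direction of $W$ on $\partial\mathcal{C}_\theta$: differentiating $\nabla_\mu h=\cot\theta\,h$ along any $v\in T(\partial\mathcal{C}_\theta)$ and using the umbilicity relation $\nabla_v\mu=\cot\theta\,v$ (valid because $\partial\mathcal{C}_\theta$ is a totally umbilical $(n-1)$-sphere in $\mathcal{C}_\theta$ with principal curvature $\cot\theta$) yields
\begin{equation*}
W(v,\mu)=v(\nabla_\mu h)-(\nabla_v\mu)(h)=\cot\theta\,\nabla_v h-\cot\theta\,\nabla_v h=0.
\end{equation*}
Choosing an orthonormal frame $\{e_1,\dots,e_{n-1},\mu\}$ at a boundary point that diagonalizes $W$ therefore also diagonalizes $\sigma_k^{ij}(W)$, so $\sigma_k^{ij}(W)\mu_i$ is a multiple of $\mu^j$ and annihilates every tangential vector. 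This kills the surviving tangential residue, and \eqref{ortho-condition} follows.
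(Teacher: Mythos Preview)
Your proof is correct and follows essentially the same route as the paper's: integrate by parts against the divergence-free Newton tensor, use that $\phi_\alpha=\langle\xi,E_\alpha\rangle$ is annihilated by $\nabla^2(\cdot)+(\cdot)\sigma$, and kill the boundary term via the matching Robin conditions together with $W(v,\mu)=0$ for $v\in T(\partial\mathcal{C}_\theta)$. The only differences are cosmetic---you derive $W(v,\mu)=0$ directly from the Robin condition and the umbilicity $\nabla_v\mu=\cot\theta\,v$, whereas the paper quotes this from an earlier reference, and you cancel the interior cross-gradient terms by the symmetry of $\sigma_k^{ij}$ after one integration by parts on each Hessian term, while the paper integrates by parts twice on a single term.
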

	\begin{proof}
    From \cite[Lemma 18.30]{Ben}, we know that $\sum\limits_{i=1}^{n}\left(\sigma_{k-1}^{ij}(W)\right)_{i}=0$, together with Lemma \ref{prop2.2}, and using integrating by parts twice, we have
 \begin{eqnarray*}
     \int_{\C_{\theta}}f(\xi)\<\xi, E_{\alpha}\>d\H^{n}
     &=&\frac{1}{k}\int_{\C_{\theta}}\<\xi, E_{\alpha}\>\sigma_{k-1}^{ij}(W)W_{ij}d\H^{n}\\
     &=&\frac{1}{k}\int_{\C_{\theta}}h \sigma^{ij}_{k-1}(W)\left(\<\xi, E_{\alpha}\>_{ij}+\<\xi, E_{\alpha}\>\delta_{ij}\right)d\H^{n}\\
     &&+\frac{1}{k}\int_{\partial \C_{\theta}}\sigma_{k-1}^{ij}(W) \left(\<\xi, E_{\alpha}\>h_{i}\mu^{j}- h(\<\xi, E_{\alpha}\>)_{j}\mu^{i}\right)d\H^{n-1}.
 \end{eqnarray*}
 Let $\{e_i\}_{i=1}^n$ be an orthonormal frame on $\C_\theta$ such that $e_n=\mu$ on $\p \C_\theta$.
 On $\partial\C_{\theta}$,  from \eqref{co-normal bundle}, we have $\<\xi, E_{\alpha}\>_{\mu}=\cot\theta \<\xi, E_{\alpha}\>$, and by \cite[Proposition~2.8]{MWW-AIM}, we have $h_{j n}=0$ for $1\leq j\leq n-1$, which implies $W_{j n}=W(e_j, e_n)=0$ for $1\leq j \leq n-1$. 
Altogether yield 
 \begin{eqnarray*}
    && \int_{\partial\C_{\theta}}\sigma_{k-1}^{ij}\left(\<\xi, E_{\alpha}\>h_{i}\mu^{j}- h(\<\xi, E_{\alpha}\>)_{j}\mu^{i}\right)d\H^{n-1}\\
    &=&\int_{\partial \C_{\theta}}\sigma_{k-1}^{nn}\left(\<\xi, E_{\alpha}\>h_{\mu}-h\<\xi, E_{\alpha}\>_{\mu}\right)d\H^{n-1}=0.
 \end{eqnarray*}
Together with the simple fact that $\<\xi, E_{\alpha}\>_{ij}+\<\xi, E_{\alpha}\>\delta_{ij}=0$ for $1\leq \a\leq n$, on $\C_{\theta}$, then the desired conclusion \eqref{ortho-condition} follows.
 \end{proof}

\section{A priori estimates}\label{sec3}

In this section, we establish the a priori estimates for the strictly convex solution of Eq. \eqref{eq-k}.  
For any solution $h$ to Eq. \eqref{eq-k}, it is easy to verify that  
\begin{eqnarray*}
	h(\xi)+\<\xi,E_\alpha\> ,\quad \text{ for all } 1\leq \a\leq n,
\end{eqnarray*} is still a solution. Up to a horizontal translation, we assume $h$ satisfies the following orthogonal condition:
\begin{eqnarray}\label{orth cond}
	\int_{\C_\theta} h(\xi) \<\xi,E_\alpha\> d\H^{n}=0, \quad \text{ for all } 1\leq \a\leq n.
\end{eqnarray}
Under the condition \eqref{orth cond},  we are able to establish the following a priori estimates.
\begin{thm}\label{priori estimate}
Let $\theta\in (0, \frac{\pi}{2})$. Suppose that $h$ is a strictly convex solution of  Eq. \eqref{eq-k} and satisfies the condition \eqref{orth cond}. For any integer $m\geq 2$ and $\gamma\in(0,1)$, there exists a constant $C>0$  depending on  $n, k,$ $\min\limits_{\C_{\theta}} f$ and $\|f\|_{C^{m}(\C_\theta)}$, such that
	\begin{eqnarray}\label{sch est}
		\|h\|_{C^{m+1,\gamma}(\C_\theta)}\leq C.
	\end{eqnarray}
\end{thm}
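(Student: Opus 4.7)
The plan is to establish the estimate in the standard hierarchy $C^0 \Rightarrow C^1 \Rightarrow C^2 \Rightarrow C^{2,\gamma} \Rightarrow C^{m+1,\gamma}$, with the global $C^2$ estimate, and in particular its boundary part, being the decisive obstacle. The assumption $\theta < \pi/2$ will enter only at the $C^2$ stage.

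For the $C^0$ bound I would follow the strategy sketched in the introduction, adapting the arguments of \cite[Lemma~3]{CY76} and \cite[Lemma~3.1]{MWW-AIM}. Testing $\sigma_k(W)=f$ against $\ell$ and against $h\ell$ and integrating by parts, using the Robin condition on $h$ together with $\nabla_\mu\ell=\cot\theta\,\ell$ (which holds since $\ell$ solves \eqref{eq-k} with $f\equiv\binom{n}{k}$) to kill all boundary terms, one recovers the capillary quermassintegrals $S_k^c(\widehat{\Sigma},\C_\theta)$ and $S_{k+1}^c(\widehat{\Sigma},\C_\theta)$ as in \eqref{measure}. Inserting these into the capillary Alexandrov--Fenchel inequality of \cite[Theorem~1.2]{MWWX} yields an upper bound on $\max h$ in terms of $\|f\|_{L^\infty}$, while the orthogonality condition \eqref{orth cond} eliminates the horizontal-translation freedom and produces a matching lower bound. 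This step works for every $\theta\in(0,\pi)$. Given $\|h\|_{C^0}$, the $C^1$ bound follows from strict convexity of $h$ on the compact cap $\C_\theta$ together with the Robin condition, which controls $|\nabla h|$ along $\partial\C_\theta$.

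The global $C^2$ estimate is the main obstacle. Since the principal radii are the eigenvalues of $W=\nabla^2 h+h\sigma$ and $\sigma_k(W)=f>0$, a uniform positive lower bound on the eigenvalues of $W$ follows from Newton--Maclaurin (Lemma~\ref{prop2.3}) once a uniform upper bound on $\lambda_{\max}(W)$ is established. For an interior maximum I would apply the maximum principle to an auxiliary function of the form $\varphi=\log\lambda_{\max}(W)+A|\nabla h|^2-Bh$, differentiating $\log\sigma_k(W)=\log f$ twice on $\C_\theta$ and exploiting concavity of $\log\sigma_k$ in combination with the commutator terms arising from the round curvature of the spherical metric $\sigma$. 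For a boundary maximum, I would diagonalize $W$ so that the co-normal $e_n=\mu$ is an eigendirection; the tangential differentiation of the Robin condition, combined with the geometry of $\partial\C_\theta$ in $\C_\theta$, forces $W_{jn}=0$ for $1\le j\le n-1$ (the same fact used in the proof of Proposition~\ref{pro ortho}), which removes the otherwise uncontrolled mixed Hessian components. The test function itself must be refined along the lines of \cite{LTU,MQ} and the capillary variant of \cite{MWW-AIM}; the normal differentiation of the Robin condition produces terms whose sign is governed by $\cot\theta$, and it is precisely the positivity $\cot\theta>0$ for $\theta<\pi/2$ that allows the estimate to close. This also accounts for the $|\nabla h|^2$ term in $\varphi$ and explains why, unlike in the closed case of \cite[Proposition~3.2]{GM}, our $C^2$ bound inherits a dependence on $\|h\|_{C^1}$.

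Once a two-sided bound $0<cI\le W\le CI$ is in hand uniformly, the operator $F(W)=\sigma_k(W)^{1/k}$ is uniformly elliptic and concave on the spectrum of $W$, and the Robin condition is a linear uniformly oblique boundary condition. The Evans--Krylov theorem in its form for oblique boundary value problems (Lieberman--Trudinger type) then yields a uniform $C^{2,\gamma}$ estimate. A standard Schauder bootstrap, applied to the differentiated equation with $f\in C^{m}$, upgrades this to the claimed $C^{m+1,\gamma}$ bound \eqref{sch est}.
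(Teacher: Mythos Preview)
Your outline is correct and follows essentially the same strategy as the paper: $C^0$ via the capillary Alexandrov--Fenchel inequality and the orthogonality condition, $C^1$ from strict convexity, $C^2$ by maximum principle together with LTU/MQ-type boundary barriers (where $\cot\theta>0$ enters), and then Lieberman--Trudinger plus Schauder bootstrap. The only cosmetic differences are that the paper uses the simpler interior test function $P=\nabla^2 h(\Xi,\Xi)+h$ rather than your $\log\lambda_{\max}(W)+A|\nabla h|^2-Bh$, and organizes the $C^2$ step as a reduction to the boundary double-normal quantity $\sup_{\partial\C_\theta}|h_{\mu\mu}|$ followed by an explicit barrier $Q=\langle\nabla h,\nabla\zeta\rangle-(A+\tfrac12 M)\zeta-\cot\theta\,h$ with $\zeta=e^{-d}-1$.
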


Before proving Theorem \ref{priori estimate}, certain preparations are necessary. In the following,  for convenience,  when dealing with the tensors and their covariant derivatives on spherical cap $\C_\theta$, we will use a local frame to express tensors with the help of their components, and indices appearing after the semi-colon denote the covariant derivatives. For example, let $\{e_i\}_{i=1}^n$ be an orthonormal frame on $\C_\theta$,   the expression $h_{ij}$ denotes $\n^2 h(e_i,e_j)$ and   $W_{ijk}\coloneqq \n_{e_k}(W_{ij})$ etc.

Denote
\begin{eqnarray}\label{eq3}
	 F(W)=\sigma_{k}^{\frac{1}{k}}(W)=f^{\frac{1}{k}}\coloneqq \tilde f,
\end{eqnarray}  and
	\begin{eqnarray*}
		F^{ij}\coloneqq \frac{\partial F}{\partial W_{ij}},
		\quad  F^{ij,kl}\coloneqq \frac{\partial^{2}F}{\partial W_{ij}\partial W_{kl}}.
	\end{eqnarray*}
We use the Einstein summation convention that the repeated indices are implicitly summed over, no matter whether the indices are upper or lower. In case of possible confusion, we will indicate the summation.

\subsection{\texorpdfstring{$C^0$}{} estimate}\ 
In this subsection, we follow the idea of Cheng-Yau in \cite[Lemma~3]{CY76}, Guan-Ma \cite[Lemma 3.1]{GM}, and also our previous work \cite[Lemma 3.1]{MWW-AIM}. By adapting a general form of the Alexandrov-Fenchel inequalities for convex capillary hypersurfaces in $\ol{\RR^{n+1}_+}$ with $\theta\in(0,\pi)$ (cf. \cite[Theorem 1.2] {MWWX}), we get the uniform bound for strictly convex solutions of Eq. \eqref{eq-k} for all $\theta\in(0,\pi)$.

\begin{lem}\label{c0 est}
Let $\theta\in(0, \pi)$ and $h$ be a strictly convex solution of  Eq. \eqref{eq-k} that satisfies \eqref{orth cond}.  
Then there exists a positive constant $C$ depending  on $n, \min\limits_{\C_{\theta}}f$, and $\max\limits_{\C_\theta} f$, such that
\begin{eqnarray*}
	0<\min_{\C_\theta} h\leq \max_{\C_\theta} h\leq C.
\end{eqnarray*} 
\end{lem}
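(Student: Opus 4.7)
The plan is to follow the Cheng--Yau style argument \cite[Lemma~3]{CY76}, adapted to the closed Christoffel--Minkowski setting by Guan--Ma \cite[Lemma~3.1]{GM} and to the capillary Minkowski case by the authors \cite[Lemma~3.1]{MWW-AIM}. The crucial new input, which allows us to cover every contact angle $\theta\in(0,\pi)$, is the capillary Alexandrov--Fenchel inequality from \cite[Theorem~1.2]{MWWX}.

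\textbf{Step 1 (integral bound for a capillary quermassintegral).} Testing the equation \eqref{eq-k} against the weight $\ell$ and using the definition \eqref{measure} of $S_k^c$ yields
\begin{eqnarray*}
\binom{n}{k}\, S_k^c(\widehat{\S}) \;=\; \int_{\C_\theta}\ell\, \sigma_k(W)\, d\mathcal{H}^n \;=\; \int_{\C_\theta}\ell\, f\, d\mathcal{H}^n \;\leq\; (\max_{\C_\theta} f)\int_{\C_\theta}\ell\, d\mathcal{H}^n,
\end{eqnarray*}
so $S_k^c(\widehat{\S})$ is bounded by a constant $C_1(n,\theta,\max f)$.

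\textbf{Step 2 (propagation by capillary Alexandrov--Fenchel).} Since $S_0^c$ is a universal constant depending only on $n,\theta$, the chain of capillary Alexandrov--Fenchel inequalities in \cite[Theorem~1.2]{MWWX} propagates the bound of Step~1 to every intermediate $S_j^c(\widehat{\S})$, $1\leq j\leq k$. In particular, after integration by parts in the identity for $S_1^c$, using the Robin boundary condition $\nabla_\mu h=\cot\theta\,h$ from Lemma~\ref{parametrization}(2) to kill the boundary term, we obtain a bound on a ``capillary mean width'' integral of the form $\int_{\C_\theta} h\,\phi\, d\mathcal{H}^n\leq C_2$, where $\phi$ is a positive weight bounded below by a universal positive constant on $\C_\theta$.

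\textbf{Step 3 (upper bound on $\max h$).} Let $R=\max_{\C_\theta} h=h(\xi_0)$ and $X_0\coloneqq X(\xi_0)\in\S$ the corresponding point of the convex body, so that $R=\langle X_0,\xi_0-\cos\theta\, e\rangle\leq|X_0|$. By the support-function inequality, for every $\xi\in\C_\theta$,
\begin{eqnarray*}
h(\xi)\;\geq\;\langle X_0,\xi-\cos\theta\, e\rangle\;=\;\langle X_0',\xi'\rangle+X_0^{n+1}(\xi^{n+1}+\cos\theta),
\end{eqnarray*}
where $X_0=(X_0',X_0^{n+1})$ with $X_0^{n+1}\geq 0$. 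Testing the orthogonality condition \eqref{orth cond} with the horizontal direction $X_0'/|X_0'|$ controls $|X_0'|$ by the integral from Step~2, while the vertical coefficient $X_0^{n+1}$ is controlled by the same integral together with the pointwise lower bound $\xi^{n+1}+\cos\theta\geq\cos\theta>0$ on $\C_\theta$. Combined, we obtain $|X_0|\leq C$, hence $R\leq C(n,\theta,\min f,\max f)$.

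\textbf{Step 4 (positivity $\min h>0$).} For every $\xi\in\C_\theta$ the outer unit normal $\nu=\xi-\cos\theta\, e$ satisfies $\langle\nu,E_{n+1}\rangle\geq\cos\theta>0$; since $\widehat{\S}\subset\ol{\R^{n+1}_+}$ is a convex body with non-empty interior, horizontally centered by \eqref{orth cond}, the quantity $h(\xi)=\sup_{X\in\widehat{\S}}\langle X,\nu\rangle$ is strictly positive. A boundary minimum with $h(\xi_m)\leq 0$ is ruled out by combining the variational inequality $\nabla_\mu h(\xi_m)\leq 0$ with the Robin condition $\nabla_\mu h=\cot\theta\,h$ and the strong maximum principle / Hopf lemma for the linearization of $\sigma_k^{1/k}(W)$ around the solution.

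The main obstacle is the careful implementation of Step~3: turning the integral bound of Step~2 into a pointwise bound on $\max h$ via the orthogonality \eqref{orth cond} in the presence of the weight $\ell$ and the Robin boundary condition, both of which are specific to the capillary framework. Once this is done, the positivity in Step~4 is essentially a consequence of the geometric setup.
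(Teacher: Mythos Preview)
Your Step~2 uses the Alexandrov--Fenchel chain in the wrong direction. In the form the paper invokes (see \eqref{R3}), the capillary AF inequality reads
\[
C_{n,k}\,B_k^{1/(k+1)}\;\le\;B_{k-1}^{1/k},\qquad B_j\coloneqq\int_\Sigma\langle X,\nu\rangle\,\sigma_{n-j}(\kappa)\,d\mu,
\]
and via the Minkowski formula $B_{j-1}\propto S_j^c$ this becomes $(S_{j+1}^c)^{1/(j+1)}\le C(S_j^c)^{1/j}$. Thus upper bounds propagate \emph{upward} in the index, not downward: knowing $S_k^c\le C$ together with $S_0^c$ constant gives only a \emph{lower} bound on the intermediate $S_j^c$ (log-concavity puts the curve above the secant), never the upper bound on $S_1^c\propto\int_{\C_\theta}h$ that you need. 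The paper avoids this trap by using one AF step in the correct direction: Minkowski's formula gives $B_{k-1}=\tfrac{k}{n-k+1}\int_{\C_\theta}f$, and then AF bounds $B_k=\int_{\C_\theta}hf$ by $C(\int_{\C_\theta}f)^{(k+1)/k}$.

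Your Step~3 (and also Step~4) invokes $\cos\theta>0$, which fails on the range $\theta\in[\pi/2,\pi)$ that the lemma is precisely designed to cover; your control of the vertical component of $X_0$ via $\xi^{n+1}+\cos\theta\ge\cos\theta>0$ therefore breaks down. The paper's argument is structurally different: after citing $h>0$ from \cite[Lemma~3.1]{MWW-AIM} (which uses \eqref{orth cond} but no sign on $\cos\theta$), it takes $R_0$ to be the radius of the smallest cap $R\C_\theta$ enclosing $\widehat\Sigma$, picks a touching point $X_0\in\Sigma\cap R_0\C_\theta$, sets $\widehat X_0=X_0/R_0\in\C_\theta$, and uses
\[
h(\xi)\ge\max\{0,\,R_0\langle\xi-\cos\theta e,\widehat X_0\rangle\}.
\]
Integrating against $f$ and combining with the bound on $\int_{\C_\theta}hf$ just obtained yields $R_0\le C$, since
$\inf_{\eta\in\C_\theta}\int_{\C_\theta}\max\{0,\langle\xi-\cos\theta e,\eta\rangle\}\,f(\xi)\,d\H^n$ is strictly positive by compactness. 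No horizontal/vertical splitting and no sign on $\cos\theta$ enter.
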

\begin{proof}  
First of all, due to \eqref{orth cond}, by \cite[Lemma 3.1]{MWW-AIM}, we have $h>0$ and the origin lies in the interior of $\widehat{\partial\Sigma}$.	
Let $R_{0}>0$ be the smallest number of $R$ such that 
	\begin{eqnarray}\label{enclosed}
	\widehat	\S\subset \widehat{R\C_\theta},
	\end{eqnarray}where   $R\C_\theta\coloneqq \{\xi \in \ov{\RR^{n+1}_+}: |\xi-\cos\theta R e|=R\}$. It is clear that there exists $X_0\in \Sigma \cap R_0\C_{\theta } $.  Denote $\widehat{X}_0\coloneqq \frac{X_0 }{R_0}\in \C_\theta$.
For  any $\xi \in \C_\theta$, we have 
	\begin{eqnarray}\label{lower bdd of h}
	h(\xi)&=&\max_{Y\in \S}\<\xi-\cos\theta e, Y\>\notag \\
    &\geq& \max \left\{0, \<\xi-\cos\theta e,X_0\>  \right\}\notag \\
    &=&  \max \left\{0, R_0\<\xi-\cos\theta e,\widehat X_0\> \right\}.
	~~~\end{eqnarray} 
Multiplying $f$ on both sides of \eqref{lower bdd of h} and integrating over $  \C_\theta$, we see
\begin{eqnarray}\label{R1}
  R_0\leq \left(\int_{\C_\theta}h(\xi)\cdot  f(\xi)d \H^{n}\right) \left(\int_{\C_\theta}\max \left\{0,  \<\xi-\cos\theta e,\widehat X_0\> \right \}\cdot f(\xi) d  \H^{n}\right) ^{-1}.
\end{eqnarray}
Note that 
\begin{eqnarray}\label{R2}
    \int_{\C_{\theta}}h(\xi)\cdot f(\xi) d \H^{n}=
\int_{\tilde{\nu}(\Sigma)}h(\xi)\cdot f(\xi)d\H^{n}=\int_{\S}\<X, \nu\>\sigma_{n-k}(\kappa)d\mu.
\end{eqnarray}
Together with the  Alexandrov-Fenchel inequalities  for convex capillary hypersurfaces \cite[Theorem~1.2]{MWWX} for $\theta\in(0, \pi)$ and the  Minkowski formulas for capillary hypersurfaces \cite[Proposition~2.6]{WWX22}, we obtain
\begin{eqnarray}
    C_{n, k}\left(\int_{\S}\<X, \nu\>\sigma_{n-k}(\kappa)d\mu\right)^{\frac{1}{k+1}}&\leq& \left(\int_{\Sigma}\<X, \nu\>\sigma_{n-k+1}(\kappa)d\mu\right)^{\frac{1}{k}}\notag \\
    &=&\left(\frac{k}{n-k+1}\int_{\C_{\theta}}f(\xi)d\H^{n}\right)^{\frac{1}{k}}.\label{R3}
\end{eqnarray}
Together with \eqref{R1}, \eqref{R2} and \eqref{R3}, we conclude that 
\begin{eqnarray}
    R_{0}
    &\leq& C_{n,k}\left(\int_{\C_{\theta}}f(\xi)d\H^{n}\right)^{\frac{k+1}{k}}\cdot\left(\inf\limits_{\eta\in \C_{\theta}}\int_{\C_\theta}\max\{0,  \<\xi-\cos\theta e, \eta\>\}\cdot f(\xi) d \H^{n}\right) ^{-1}.\quad ~~~\label{upper bound of R0}
\end{eqnarray}
Using \eqref{enclosed} and \eqref{upper bound of R0}, we see  	
\begin{eqnarray*}
	h(\xi) =\<\xi-\cos\theta e , \tilde{\nu}^{-1}(\xi)\>\leq |\tilde{\nu}^{-1}(\xi)| <2R_0\leq C(n, \min_{\C_\theta} f,\max_{\C_\theta} f).
\end{eqnarray*}
 We complete the proof of Lemma \ref{c0 est}.
\end{proof}
\subsection{\texorpdfstring{$C^1$ }{}estimate}
 In this subsection, we get the gradient estimate for the strictly convex solution $h$ of Eq. \eqref{eq-k}. This estimate is a direct consequence of the convexity and the $C^{0}$ estimate.
\begin{lem}\label{C1 est}
    Let $\theta\in (0, \pi)$, and let $h$  be a strictly convex solution of Eq.  \eqref{eq-k}. Then 
     \begin{eqnarray}\label{gradient ps}
       \max\limits_{\C_{\theta}} |\n h|\leq C,
    \end{eqnarray}
    where the constant $C$ only depends on  $\|h\|_{C^{0}(\C_{\theta})}$.
\end{lem}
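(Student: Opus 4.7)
The plan is to leverage the identity in Lemma \ref{parametrization}(1), which expresses the point of $\Sigma$ with capillary Gauss image $\xi$ as
\[
X(\xi) \;=\; \nabla h(\xi) + h(\xi)\, T^{-1}(\xi), \qquad T^{-1}(\xi) = \xi - \cos\theta\, e.
\]
Since $\C_\theta$ is the unit sphere centered at $\cos\theta\, e$, the vector $T^{-1}(\xi)$ is the outward unit normal to $\C_\theta$ at $\xi$; it is therefore orthogonal to the tangent space in which $\nabla h(\xi)$ lies. The Pythagorean identity immediately yields
\[
|X(\xi)|^2 \;=\; |\nabla h(\xi)|^2 + h(\xi)^2 \qquad \text{on } \C_\theta,
\]
and reduces \eqref{gradient ps} to a uniform bound on the position vector $|X|$ along $\Sigma$ in terms of $\|h\|_{C^0(\C_\theta)}$ alone.

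Such a bound comes from the defining inequality of the support function. For any $Y \in \widehat\Sigma$ and any $\nu \in \SS^n_\theta$, the convexity of $\widehat\Sigma$ gives
\[
\langle Y, \nu\rangle \;\leq\; h(T(\nu)) \;\leq\; \|h\|_{C^0(\C_\theta)}.
\]
Testing $\nu = E_{n+1}\in \SS^n_\theta$ controls the vertical component $\langle Y, E_{n+1}\rangle$. For any unit horizontal vector $\omega \perp E_{n+1}$, testing instead $\nu = \cos\theta\, E_{n+1} + \sin\theta\, \omega \in \partial \SS^n_\theta \subset \SS^n_\theta$ and using $\langle Y, E_{n+1}\rangle \geq 0$ (which follows from $\widehat\Sigma\subset \overline{\RR^{n+1}_+}$) yields a bound on $\langle Y, \omega\rangle$ of the form $C(\theta)\,\|h\|_{C^0(\C_\theta)}$ (the $\sin\theta$ in the denominator being harmless since $\theta \in (0,\pi)$). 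Decomposing an arbitrary unit direction into its vertical and horizontal parts then gives $|Y| \leq C(\theta)\,\|h\|_{C^0(\C_\theta)}$. Specializing to $Y = X(\xi)$ and returning to the Pythagorean identity delivers $|\nabla h(\xi)|^2 \leq |X(\xi)|^2 \leq C(\theta)^2\,\|h\|_{C^0(\C_\theta)}^2$, which is \eqref{gradient ps}.

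The argument is purely geometric and convex-analytic; no genuine analytical obstacle arises. The only delicate point worth flagging is that the support function is prescribed only on the cap $\C_\theta$ rather than on the full sphere, so the diameter of $\widehat\Sigma$ is \emph{not} controlled by $\|h\|_{C^0}$ alone for an arbitrary convex body. It is precisely the capillary constraint $\widehat\Sigma\subset \overline{\RR^{n+1}_+}$ (i.e.\ $\langle Y,E_{n+1}\rangle\geq 0$) that, combined with the two choices of $\nu$ above, recovers a complete control of $|Y|$. This is the sole place where the capillary geometry enters in a nontrivial way, rather than as a cosmetic adaptation of the corresponding classical argument on $\SS^n$.
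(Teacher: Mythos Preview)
Your argument is correct, and it takes a genuinely different route from the paper's proof. Both proofs ultimately bound the same quantity $P=|\nabla h|^2+h^2$, but the paper treats $P$ analytically via a maximum principle: at an interior maximum the strict positivity of $W$ forces $\nabla h=0$, while at a boundary maximum the Robin condition together with the structural identity $h_{\alpha n}=0$ on $\partial\C_\theta$ (from \cite[Proposition~2.8]{MWW-AIM}) reduces $P$ to $(1+\cot^2\theta)h^2$. You instead recognize the geometric content $P=|X|^2$ from Lemma~\ref{parametrization}(1) and bound $|X|$ directly by testing the support inequality $\langle Y,\nu\rangle\le h(T(\nu))$ at $\nu=E_{n+1}$ and at $\nu\in\partial\SS^n_\theta$, using $\widehat\Sigma\subset\overline{\RR^{n+1}_+}$ to close the estimate. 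Your approach is more elementary in that it avoids both the critical-point analysis and the auxiliary boundary identity $h_{\alpha n}=0$; the paper's approach, on the other hand, stays entirely within the PDE framework and does not appeal to the convex-body interpretation of $h$. One minor remark: when $\theta>\pi/2$ you should use the already-established bound $\langle Y,E_{n+1}\rangle\le\|h\|_{C^0}$ (rather than only $\langle Y,E_{n+1}\rangle\ge0$) to absorb the term $\cos\theta\,\langle Y,E_{n+1}\rangle$, but this is harmless and still yields a constant of the form $C(\theta)\|h\|_{C^0}$.
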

\begin{proof}
    Consider the  function
    \begin{eqnarray*}
        P=|\n h|^{2}+h^{2}.
    \end{eqnarray*}
Suppose that $P$ attains its maximum value at some point, say $\xi_{0}\in \C_{\theta}$. If $\xi_{0}\in \C_{\theta}\setminus \partial\C_{\theta}$, by the maximal condition, we have
\begin{eqnarray*}
    0=\n_{e_i}P=2h_{k}h_{ki}+2hh_{i},\quad \text{for}~1\leq i\leq n.
\end{eqnarray*}
Since $h$ is a strictly convex function, we have $\n h(\xi_{0})=0$.  Combining with Lemma \ref{c0 est},  we conclude that \eqref{gradient ps} holds.

If $\xi_{0}\in \partial\C_{\theta}$, from \cite[Proposition 2.8]{MWW-AIM}, 
\begin{eqnarray}\label{h1n}
    h_{\alpha n}=0, ~~\rm{for~any}~~ 1\leq \alpha\leq n-1.
\end{eqnarray}
Using the maximal condition again,  we have 
\begin{eqnarray*}
   0= \n_{e_{\alpha}} P=2\sum\limits_{i=1}^{n}h_{i}h_{i\alpha}+2hh_{\alpha},
\end{eqnarray*}
which implies 
\begin{eqnarray}\label{tangent}
    (\n_{e_{\alpha}}h)(\xi_{0})=0.
\end{eqnarray}
Together with  \eqref{tangent} and Lemma \ref{c0 est}, we get
\begin{eqnarray*}
    |\n h|^{2}\leq (|\n h|^{2}+|h|^{2})(\xi_{0})=(|h_{\mu}|^{2}+h^{2})(\xi_{0})=(1+\cot^{2}\theta)h^{2}(\xi_{0})\leq C.
\end{eqnarray*}
This completes the proof.

\end{proof}

\subsection{\texorpdfstring{$C^2$}{} estimate}\
In this subsection, we establish the $C^2$ estimate.  
This approach was initiated by Lions-Trudinger-Urbas  \cite{LTU} for studying the Monge-Amp\`ere equation and later developed by Ma-Qiu \cite{MQ} for the $k$-Hessian equation with the Neumann boundary condition on a uniformly convex domain in Euclidean space. It is notable to remark that in comparison with the $C^{2}$ estimate of admissible solutions to the $k$-Hessian equation in \cite{MQ}, the $C^{2}$ estimate is relatively simple in our setting once we establish the convexity of the solution.

 We divide the proof into two steps. First, we reduce the global $C^2$ estimate to the boundary double normal $C^2$ estimate, that is, Lemma \ref{c2 to normal}. Second, we derive the boundary double normal $C^2$ estimate by constructing a suitable test function, as shown in Lemma \ref{c2 est}. In particular, the convexity of $\p \C_\theta\subset \C_\theta$ plays an important role in deriving a priori $C^2$ estimate, that is, we  assume that $\theta\in(0,\frac{\pi}{2})$.  More precisely,  this condition is used in both of the aforementioned steps: one is in \eqref{1n} and another is in \eqref{hessian of zeta}.

\begin{lem}\label{c2 to normal}
Let $\theta\in (0, \frac{\pi}{2})$. Suppose that $h$ is a strictly convex solution of  Eq. \eqref{eq-k} that satisfies \eqref{orth cond}. Denote $M\coloneqq \sup\limits_{\partial \C_{\theta}}|\n^{2}h(\mu, \mu)|$, 
	then
	\begin{eqnarray}\label{nor-1}
		\sup\limits_{\C_{\theta}}|\n^{2}h|\leq C(1+M),
	\end{eqnarray}
	where the positive constant $C$ depends on $n, \|f\|_{C^2}$, and $\|h\|_{C^{1}}$.
\end{lem}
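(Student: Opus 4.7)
The plan is to bound the largest eigenvalue $\lambda_{\max}(W)$ of $W \coloneqq \n^2 h + h\sigma$ on $\overline{\C_\theta}$; combined with the strict convexity of $h$ and the $C^0$--$C^1$ estimates from Lemmas \ref{c0 est} and \ref{C1 est}, this yields \eqref{nor-1}. Following the Lions-Trudinger-Urbas \cite{LTU}/Ma-Qiu \cite{MQ} strategy for Hessian equations with Robin data, I would apply the maximum principle to an auxiliary function of the form
\begin{equation*}
P(\xi,\eta) \coloneqq \log W(\eta,\eta) + \phi(|\n h|^2) - A\, h(\xi),\qquad \xi\in\overline{\C_\theta},\ \eta\in T_\xi\C_\theta,\ |\eta|_\sigma=1,
\end{equation*}
with $\phi$ and $A>0$ to be tuned. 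After a rotation one may assume the maximum of $P$ is attained at some $\xi_0$ with direction $\eta_0=e_1$, and that $W$ is diagonal at $\xi_0$ with $W_{11}=\lambda_{\max}(W)(\xi_0)$.

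In the interior case $\xi_0 \in \C_\theta \setminus \partial\C_\theta$, I would differentiate the equation $F(W)=\tilde f$ (with $F \coloneqq \sigma_k^{1/k}$ as in \eqref{eq3}) twice along $e_1$. Concavity of $F$ on the positive cone kills $F^{ij,kl}W_{ij1}W_{kl1}\leq 0$; commuting covariant derivatives via the Ricci identity on the spherical cap yields curvature terms of the schematic form $W_{ij11}-W_{11ij}=W_{11}W_{ij}-W_{1i}W_{1j}+\text{(l.o.t.)}$. Inserting the first- and second-order conditions for the maximum of $P$, together with the Euler relation $F^{ij}W_{ij}=\tilde f$, standard Pogorelov-type manipulation forces $W_{11}(\xi_0)\leq C$.

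In the boundary case $\xi_0\in\partial\C_\theta$, the identity $h_{\alpha n}(\xi_0)=0$ for $\alpha\leq n-1$ (cf.\ \cite[Proposition 2.8]{MWW-AIM}, labelled \eqref{1n} in the excerpt) block-diagonalises $W(\xi_0)$, so $\eta_0$ is either $\mu$ or tangent to $\partial\C_\theta$. The normal case is immediate: $W_{nn}(\xi_0)\leq M+C$. In the tangential case, WLOG $\eta_0=e_1\perp\mu$, and I would invoke Hopf's lemma $\n_\mu P(\xi_0)\geq 0$. Differentiating the Robin condition $h_\mu=\cot\theta\, h$ twice along $e_1$ gives $h_{\mu 11}=\cot\theta\, h_{11}$ on $\partial\C_\theta$; combined with the Ricci identity $h_{11\mu}=h_{\mu 11}+O(|\n h|)$, this produces
\begin{equation*}
\n_\mu\log W_{11}\bigr|_{\xi_0}=\cot\theta+\frac{O(1)}{W_{11}(\xi_0)}.
\end{equation*}
Plugging this, together with $\n_\mu(-A h)=-A\cot\theta\, h$ and the corresponding expression for $\n_\mu\phi(|\n h|^2)$, into $\n_\mu P(\xi_0)\geq 0$ shows that choosing $A$ large enough (so that $-A\cot\theta\, h$ dominates the fixed $\cot\theta$ term) forces $W_{11}(\xi_0)\leq C$ in this case as well.

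The main obstacle is engineering $\phi$ and $A$ so that the interior inequality $F^{ij}P_{ij}\leq 0$ and the boundary Hopf inequality both deliver an \emph{upper} bound on $\lambda_{\max}(W)$, while the $C^1$-dependent lower-order terms generated by the Ricci identity can be absorbed into $C(1+M)$. The restriction $\theta\in(0,\pi/2)$ enters essentially at two points: the positivity of $\cot\theta$ in the boundary computation (needed so that $-A\cot\theta\, h$ has the right sign to be coercive), and the uniform convexity of $\partial\C_\theta\subset\C_\theta$, valid only for $\theta<\pi/2$, which supplies the favourable sign of the barrier Hessian referenced in \eqref{hessian of zeta}.
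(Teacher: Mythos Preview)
Your interior argument is in the right spirit, but the boundary tangential case contains a concrete error. You claim that differentiating the Robin condition twice tangentially yields $h_{\mu 11}=\cot\theta\, h_{11}$ and hence $\n_\mu\log W_{11}=\cot\theta+O(1)/W_{11}$. This overlooks the contribution from the second fundamental form of $\partial\C_\theta\subset\C_\theta$ when commuting the frame: the correct identity (derived in the paper in the proof of Lemma~\ref{lem-def}, using the Gauss--Weingarten equations) is
\[
\n_\mu W_{\alpha\alpha}=W_{\alpha\alpha n}=\cot\theta\,(W_{nn}-W_{\alpha\alpha}),\qquad 1\le\alpha\le n-1,
\]
so $\n_\mu\log W_{11}=\cot\theta\,(W_{nn}/W_{11}-1)$, which is $\le 0$ whenever $e_1$ is the maximising direction. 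With this sign, your plan of choosing $A$ large so that $-A\cot\theta\, h$ ``dominates the fixed $\cot\theta$ term'' works against you: both $\n_\mu\log W_{11}$ and $-A\cot\theta\, h$ are nonpositive, so Hopf's inequality $\n_\mu P\ge 0$ cannot be forced to produce a bound on $W_{11}$ by enlarging $A$.

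In fact the correct formula makes the boundary step \emph{trivial}, and this is exactly the paper's route. The paper uses the much simpler test function $P(\Xi,\xi)=\n^2 h(\Xi,\Xi)+h(\xi)=W(\Xi,\Xi)$---no logarithm, no $\phi(|\n h|^2)$, no constant $A$. At a boundary maximum in a tangential direction $e_1$, Hopf gives $\n_\mu W_{11}\ge 0$, i.e.\ $\cot\theta\,(W_{nn}-W_{11})\ge 0$; since $\theta\in(0,\tfrac\pi2)$ this yields $W_{11}\le W_{nn}\le M+\|h\|_{C^0}$ in one line (this is the content of the reference to \cite[Proof of Lemma 3.3, Case~2]{MWW-AIM}). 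Your Pogorelov-type machinery is designed for merely $k$-admissible solutions as in \cite{MQ}; once strict convexity is assumed it is unnecessary and, as you have set it up, actually obstructs the argument. Finally, the barrier estimate \eqref{hessian of zeta} you invoke plays no role in this lemma---it enters only in the subsequent double-normal estimate (Lemma~\ref{c2 est}).
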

\begin{proof}
	We consider the auxiliary function 
	\begin{eqnarray*}
		P(\Xi, \xi)=\n^{2}h(\Xi, \Xi)+h(\xi), 
	\end{eqnarray*}
	for $\xi\in\C_{\theta}$ and $\Xi\in T_{\xi}\C_{\theta}$ is a unit vector.

	Suppose that $P$ attains its maximum at some   point $\xi_{0} \in \C_{\theta} $ along some unit vector $\Xi_{0}\in T_{\xi_{0}}\C_{\theta}$.
	Next, we split the proof into two cases.

   \
	
	{\bf Case  1.} $\xi_{0} \in \C_\theta \setminus\p \C_\theta$. In this case, we choose an orthonormal frame  $\{e_{i}\}_{i=1}^{n}$ around $\xi_{0}$, such that 
	$(W_{ij})=(h_{ij}+h\d_{ij})$ is diagonal at $\xi_{0}$ and $\Xi_0=e_1$.

	Direct calculation yields
	\begin{eqnarray*} 
		P_{i}=h_{11 i}+h_{i},
	\end{eqnarray*}
	and 
	\begin{eqnarray*} 
		\begin{aligned}
 	 F^{ij}P_{ij}= F^{ij}(h_{11 ij}+h_{ij} ).
		\end{aligned}
	\end{eqnarray*}
Note that 
	\begin{eqnarray}\label{sum-1}
		F^{ii}h_{ii}=F^{ii}(W_{ii}-h)=\widetilde f-h \sum\limits_{i=1}^{n}F^{ii}
		\geq -C\left(1+\sum\limits_{i=1}^{n}F^{ii}\right),
	\end{eqnarray}
	where the positive constant $C$ depends on $\|f\|_{C^{0}}$ and $\|h\|_{C^{0}}$. 
Since $W$ is diagonal at $ \xi_{0}$, it follows that $F^{ij}$ is also diagonal at $\xi_{0}$, and
	\begin{eqnarray}\label{sum-2}
		\sum\limits_{i=1}^{n}\sum\limits_{k\neq i}F^{ik}h_{ki}=0.
	\end{eqnarray}
 
Differentiating Eq. \eqref{eq3} once and twice,  we obtain
	\begin{eqnarray}\label{one der}
		F^{ij}W_{ij1}=\widetilde f_{1},
	\end{eqnarray}
	and 
	\begin{eqnarray}\label{tw0 der}
		F^{ij}W_{ij 11}=\widetilde f_{11}-F^{ij, kl}W_{ij1}W_{kl1}\geq \widetilde f _{11},
	\end{eqnarray}
 where the last inequality used the concavity of $\sigma^{\frac{1}{k}}_{k}(W)$.
 
From \eqref{one der} and  \eqref{tw0 der}, it follows	\begin{eqnarray}\label{sum one deri}		F^{ij}h_{ij1}=F^{ij}W_{ij1}-h_{1}\sum\limits_{i=1}^{n}F^{ii}		\geq -C \left(1+\sum\limits_{i=1}^{n}F^{ii} \right),	\end{eqnarray}
 and
 	\begin{eqnarray}\label{sum two deri}		F^{ij}h_{ij11}=F^{ij}W_{ij11}-h_{11}\sum\limits_{i=1}^{n}F^{ii}\geq -C-h_{11}\sum\limits_{i=1}^{n}F^{ii}.	\end{eqnarray}
For the standard metric on spherical cap $\C_\theta$,  we have the commutator formulae 
\begin{eqnarray}\label{forth comm}
	h_{klij}=h_{ijkl}+2h_{kl}\d_{ij}-2h_{ij}\d_{kl}+h_{li}\d_{kj}-h_{kj}\d_{il}.
\end{eqnarray}
Substituting \eqref{forth comm} into \eqref{sum two deri} and  combining \eqref{sum-1}, \eqref{sum-2} and  Lemma \ref{C1 est}, we have 
	\begin{eqnarray}\label{sum-4}
		\begin{aligned}
			F^{ij}h_{11 ij}=&F^{ij}\left(h_{ij11}+2h_{11}\delta_{ij} -2 h_{ij}+h_{1i}\d_{1j} -h_{1j}\d_{i1}\right)
			\\
			\geq &F^{ij}h_{ij11}+2h_{11}\sum\limits_{i=1}^{n}F^{ii}-C \left(1+\sum\limits_{i=1}^{n}F^{ii} \right)\\
			\geq &h_{11}\sum\limits_{i=1}^{n}F^{ii}-C \left(1+\sum\limits_{i=1}^{n}F^{ii} \right),
		\end{aligned}
	\end{eqnarray}where the positive constant $C$ may vary from line to line and  only depends  on $n$, $\|h\|_{C^1}$ and $\|f\|_{C^2}$.

By the maximal condition and \eqref{sum-4}, we derive	\begin{eqnarray}
0&\geq &	 F^{ij}P_{ij}=
	 F^{ij}h_{11ij}+F^{ij}h_{ij}
	\notag  \\&\geq &
	 h_{11} \sum\limits_{i=1}^{n}F^{ii} -C \left(1+\sum\limits_{i=1}^{n}F^{ii}\right).\label{key-ine}
	\end{eqnarray}
From Lemma \ref{prop2.3},
\begin{eqnarray}\label{lower bound}
	\sum\limits_{i=1}^{n}F^{ii}=\frac{n-k+1}{k}\sigma_{k}^{\frac{1}{k}-1}\sigma_{k-1}\geq \binom{n}{k}^{\frac{1}{k}}.
\end{eqnarray} 
 Together with  \eqref{key-ine} and  \eqref{lower bound}, we obtain 
\begin{eqnarray}\label{11}
	h_{11}( \xi_{0})\leq C\left(1+\binom{n}{k}^{-\frac{1}{k}}\right).
\end{eqnarray}

\
{\bf Case 2.}  $ \xi_{0} \in \partial \C_{\theta}$. In this case, we can follow the same argument as in \cite[Proof of Lemma 3.3, Case 2]{MWW-AIM} to obtain 
		\begin{eqnarray}\label{1n}
			\n^2	h(\Xi_0,\Xi_0)  \leq  |h_{\mu\mu}|(\xi_{0})+2 \|h\|_{C^0(\C_{\theta})}.
		\end{eqnarray}

	Finally, combining \eqref{11} and \eqref{1n}, we conclude that \eqref{nor-1} holds.

\end{proof}

Next, we establish the double normal derivative estimate of $h$ on the boundary $\partial\C_{\theta}$. Together with Lemma \ref{c2 est}, this yields the a priori $C^2$ estimate.
In order to control the double normal derivative estimate of $h$, we will construct some barrier functions of $h_\mu$ near the boundary. The auxiliary functions here are inspired by the works in \cite{Gb99, LTU, MQ} and \cite{MWW-AIM}.

To begin with, we introduce an important function 
\begin{eqnarray*}
	\zeta(\xi)\coloneqq e^{- d( \xi)}-1,
\end{eqnarray*}
where $d(\xi)=dist( \xi, \partial \C_{\theta})$. The function
$\zeta(\xi)$ has been used in \cite[Lemma~3.1]{Gb99} and \cite[Lemma~3.4]{MWW-AIM}. It is easy to see that $\zeta|_{\partial \C_{\theta}}=0$, and $\n \zeta|_{\partial \C_{\theta}}=\mu$. Meanwhile, near $\partial \C_{\theta}$, there exists a small positive constant $\delta_{0} (0<\delta_{0}<1)$ such that
\begin{eqnarray}\label{hessian of zeta}
(	\n^{2}_{ij}\zeta) \geq \min \left\{\cot\theta,1 \right\}  \s,\quad {\rm{in}}\quad \Omega_{\delta_{0}},
\end{eqnarray}
where $\Omega_{\delta_{0}}\coloneqq  \left\{ \xi\in \C_{\theta}: d( \xi)\leq \delta_{0} \right\}$.

\begin{lem}\label{c2 est}
Let $\theta\in (0, \frac{\pi}{2})$. Suppose	 $h$ is a strictly convex solution of Eq.  \eqref{eq-k} that satisfies \eqref{orth cond}. 
	Then
	\begin{eqnarray*}
		\sup\limits_{\C_{\theta}}|\n^{2}h|\leq C,
	\end{eqnarray*}
	where the positive constant $C$ depends only on $n, k,  \|f\|_{C^2}$  and $\|h\|_{C^{1}}$.
\end{lem}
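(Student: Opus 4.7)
By Lemma \ref{c2 to normal}, the global $C^2$ bound reduces to an estimate of the form $M := \sup_{\partial\C_\theta}|h_{\mu\mu}| \leq C$ with $C$ depending only on $n, k, \|f\|_{C^2}$, and $\|h\|_{C^1}$. Our plan is to establish this boundary estimate via a Hopf-type barrier argument in the collar $\Omega_{\delta_0}$.

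Extend $\mu$ smoothly to $\Omega_{\delta_0}$ by $\mu := -\nabla d$, with the extension chosen so that $\nabla_\mu\mu$ vanishes on $\partial\C_\theta$. The quantity $w := h_\mu - \cot\theta\,h$ then vanishes identically on $\partial\C_\theta$ by the Robin condition, and $\partial_\mu w|_{\partial\C_\theta} = h_{\mu\mu} - \cot^2\theta\,h$; hence one-sided pointwise bounds for $\partial_\mu w$ on $\partial\C_\theta$ will produce the desired control on $h_{\mu\mu}$. To this end, consider the auxiliary function
\[
\Phi := -w + A\zeta + B(|\nabla h|^2 + h^2)
\]
on $\Omega_{\delta_0}$, for positive constants $A, B$ to be determined.

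The linchpin is to show $L\Phi \geq 0$ in $\Omega_{\delta_0}$, where $L := F^{ij}\nabla_{ij}$ is the linearized operator. Differentiating $F(W) = \tilde f$ along $\mu$ and using the commutator identity on the constant-curvature cap $\C_\theta$ produces $F^{ij}h_{ij\mu} = \tilde f_\mu - h_\mu\sum_i F^{ii}$ plus contributions bounded in terms of $\|h\|_{C^1}$ and $\sum_i F^{ii}$, so that $L(-w)$ contains an indefinite quadratic cross-term of the form $-2F^{ij}h_{ki}(\nabla_j\mu^k)$ arising from the smooth extension of $\mu$. A direct computation yields
\[
L(|\nabla h|^2+h^2) = 2F^{ij}h_{ki}h_{kj} + 2F^{ij}h_ih_j + 2\langle\nabla h,\nabla\tilde f\rangle + 2h\tilde f - 2(|\nabla h|^2+h^2)\sum_i F^{ii},
\]
and the positive quadratic term $2BF^{ij}h_{ki}h_{kj}$ absorbs the indefinite cross-term via Cauchy-Schwarz, provided $B$ is at least a small multiple of a splitting parameter $\epsilon > 0$. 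The residual terms of order $\sum_i F^{ii}$ and $O(1)$ are then dominated by the positive term $AL(\zeta) \geq A\min\{\cot\theta,1\}\sum_i F^{ii}$ supplied by \eqref{hessian of zeta}; here the uniform lower bound $\sum_i F^{ii} \geq \binom{n}{k}^{1/k}$ from Lemma \ref{prop2.3} is essential, as is the positivity of $\cot\theta$ afforded by the hypothesis $\theta < \pi/2$. Choosing $A$ sufficiently large yields $L\Phi \geq 0$.

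The maximum principle then places $\sup_{\Omega_{\delta_0}}\Phi$ on $\partial\Omega_{\delta_0}$. On the inner boundary $\{d=\delta_0\}$ the strongly negative term $A\zeta = A(e^{-\delta_0}-1)$ dominates for large $A$, so the supremum is attained at some $\xi_0 \in \partial\C_\theta$. At $\xi_0$, the Hopf-type condition $\partial_\mu\Phi(\xi_0) \geq 0$, combined with $h_{\alpha\mu}|_{\partial\C_\theta} = 0$ (see \cite[Proposition~2.8]{MWW-AIM}) and the Robin condition, which simplify $\partial_\mu(|\nabla h|^2+h^2)|_{\partial\C_\theta}$ to $2\cot\theta\,h(h_{\mu\mu}+h)$, produces an inequality of the form
\[
\bigl(1 - 2B\cot\theta\,h(\xi_0)\bigr)\,h_{\mu\mu}(\xi_0) \leq A + C.
\]
Choosing $B$ small enough that $1 - 2B\cot\theta\|h\|_{C^0} > 0$ (compatible with the Cauchy-Schwarz requirement by taking $\epsilon$ correspondingly small) then extracts $h_{\mu\mu}(\xi_0) \leq C'$. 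Running the same barrier argument with a smooth tangential modification of $B(|\nabla h|^2+h^2)$ localized near any prescribed candidate maximum point of $h_{\mu\mu}|_{\partial\C_\theta}$ yields the uniform upper bound on $\partial\C_\theta$; the corresponding lower bound $h_{\mu\mu} \geq -\|h\|_{C^0}$ follows directly from strict convexity $W > 0$. Combined with Lemma \ref{c2 to normal}, this completes the $C^2$ estimate. The main technical obstacle is the simultaneous calibration of the constants $A, B, \epsilon$ to ensure both $L\Phi \geq 0$ and the correct sign in the Hopf-step arithmetic, together with the precise bookkeeping of the curvature-commutator contributions on $\C_\theta$.
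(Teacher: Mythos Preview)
Your barrier idea is on the right track, but the argument has a genuine gap at the Hopf step. Since $\Phi|_{\partial\C_\theta}=B(|\nabla h|^2+h^2)$ is \emph{not} constant along $\partial\C_\theta$, the maximum principle only forces $\partial_\mu\Phi(\xi_0)\geq 0$ at the boundary point $\xi_0$ where $|\nabla h|^2+h^2$ is maximal, and there is no reason this should coincide with the point where $h_{\mu\mu}$ is maximal. Your proposed fix via an unspecified ``tangential modification localized near any prescribed candidate maximum point'' does not close this: to force the maximum of $\Phi|_{\partial\C_\theta}$ to a chosen point one would have to add a term that either destroys $L\Phi\geq 0$ or introduces new constants whose size is not controlled.

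The paper's proof avoids this precisely by taking, in effect, $B=0$. The crucial observation you missed is that the cross term you call ``indefinite'' does not need quadratic absorption: in a frame diagonalizing $W$ one has
\[
2F^{ij}h_{ki}\zeta_{kj}=2\sum_iF^{ii}(W_{ii}-h)\zeta_{ii},
\]
and since $\sum_i F^{ii}W_{ii}=\tilde f$ is bounded and $|\zeta_{ii}|\leq C$, this is already $\leq C(1+\sum_iF^{ii})$. With $B=0$ the barrier $Q=\langle\nabla h,\nabla\zeta\rangle-(A+\tfrac12 M)\zeta-\cot\theta\,h$ vanishes \emph{identically} on $\partial\C_\theta$; ruling out interior minima and checking $Q\geq 0$ on the inner boundary then yields $Q\geq 0$ throughout $\Omega_{\delta_0}$, so the one-sided derivative inequality $Q_\mu\leq 0$ holds at \emph{every} point of $\partial\C_\theta$, in particular where $h_{\mu\mu}$ is largest. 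The self-referential coefficient $\tfrac12 M$ in front of $\zeta$ then gives $\sup_{\partial\C_\theta}h_{\mu\mu}\leq C+\tfrac12 M$, and together with the companion barrier (or, as you correctly note, the trivial lower bound from $W>0$) one closes to $M\leq C$.
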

\begin{proof}
	We consider an auxiliary function
		\begin{eqnarray*}
			Q(\xi)\coloneqq \<\n h, \n \zeta\>-\left(A+\frac{1}{2}M\right)\zeta(\xi)-\cot\theta h(\xi), \quad  \xi\in \Omega_{\delta_{0}},
		\end{eqnarray*}
		where	$A$ is a positive constant to be determined later.
		
		Assume that $Q $ attains its minimum value at $\xi_{0}\in \left(\Omega_{\delta_{0}}\setminus \partial \Omega_{\delta_{0}}\right)$, and choose an orthonormal frame  $\{e_{i}\}_{i=1}^{n}$ around $\xi_{0}$  such that  $(W_{ij})$ is diagonal at $\xi_{0}$. Recall the commutator formula
        \begin{eqnarray}\label{third comm}
            h_{kij}=h_{ijk}+h_{k}\delta_{ij}-h_{j}\delta_{kij},
        \end{eqnarray}
combining \eqref{third comm},  \eqref{sum one deri}, \eqref{hessian of zeta} and Lemma \ref{C1 est}, at $\xi_0$, we obtain
		\begin{eqnarray*}
			0&\leq & 	 F^{ij}Q_{ij} 
			\\&=&F^{ij}h_{kij}\zeta_{k}+F^{ij}h_{k}\zeta_{kij}+2F^{ij}h_{ki}\zeta_{kj}-\left(A+\frac{1}{2}M\right)F^{ii}\zeta_{ii}-\cot\theta F^{ii}h_{ii}
			\\&=&F^{ii}\left(W_{ii; k}-h_{i}\delta_{ki}
			\right)\zeta_{k}+2F^{ii}(W_{ii}-h)\zeta_{ii}-\left(A+\frac{1}{2}M\right)F^{ii}\zeta_{ii}\\
			&&+F^{ii}h_{k}\zeta_{kii}-\cot\theta F^{ii}h_{ii}.
			\\&\leq  & C\left(1+\sum\limits_{i=1}^{n}F^{ii}\right)-\frac{1}{2}\left(A+\frac{1}{2}M\right)\min\{\cot\theta,1\}\sum\limits_{i=1}^{n}F^{ii},
		\end{eqnarray*}
		where the constant $C$ depends on $n,   \|f\|_{C^1}$ and $\|h\|_{C^{1}}$. 
		In view of \eqref{lower bound},
		it follows 
		\begin{eqnarray*}
			C\left(1+\sum\limits_{i=1}^{n}F^{ii}\right)-\frac{1}{2}\left(A+\frac{1}{2}M\right)\min\{\cot\theta,1\}\sum\limits_{i=1}^{n}F^{ii}<0,
		\end{eqnarray*} 
		if $A$ is chosen 
		 by   
		\begin{eqnarray}\label{chosen of A}
			A\coloneqq \frac{4C\left(1+\binom{n}{k}^{-\frac{1}{k}}\right)}{\min\{\cot\theta,1\}}+\frac{1}{1-e^{-\delta_{0}}}\max\limits_{   \C_{\theta}}(|\n h|+\cot\theta h). 
		\end{eqnarray} 
		   This  contradicts  $F^{ij}Q_{ij}\geq 0$ at $\xi_0$.  Therefore, $\xi_0\in \p \O_{\d_0}$.
		
		If  $\xi_0\in \partial \C_{\theta}\cap \p \O_{\d_0}$, it is easy to see that $Q(\xi_0)=0$. 
		
		If $\xi_0\in \partial \Omega_{\delta_{0}}\setminus  \partial \C_{\theta} $, we have $d(\xi_0)=\delta_0$, and from \eqref{chosen of A},
		\begin{eqnarray*}
			Q(\xi)\geq -|\n h|+A(1-e^{-\delta_{0}})-\cot\theta h\geq 0.
		\end{eqnarray*}
		
		In conclusion, we deduce that 
		\begin{eqnarray*}
			Q(\xi)\geq 0,\quad{\rm{in}}\quad \Omega_{\delta_{0}}.
		\end{eqnarray*} 
		
		\
		
		Now we are ready to obtain the double normal second derivative estimate of $h$. 
		Assume  $h_{\mu\mu}(\eta_{0})\coloneqq \sup\limits_{\partial \C_{\theta}}h_{\mu\mu}>0$ for some $\eta_0\in \p \C_\theta$. In view of \eqref{h1n}, Lemma \ref{C1 est}, and $Q\equiv 0$ on $\p \C_\theta$, 
		\begin{eqnarray*}
			0&\geq &Q_{\mu}(\eta_{0})\\
			&\geq &(h_{k\mu}\zeta_{k}+h_{k}\zeta_{k\mu})-\left(A+\frac{1}{2}M\right)\zeta_{\mu}-\cot\theta h_{\mu}\\
			&\geq& h_{\mu\mu}(\eta_{0})-\left(A+\frac{1}{2}M\right)-C,
		\end{eqnarray*}
		it yields  
		\begin{eqnarray}\label{sup estimate}
			\sup\limits_{\partial \C_{\theta}}h_{\mu\mu}\leq C+\frac{1}{2}M.
		\end{eqnarray}
		Similarly, we consider an auxiliary function as
		\begin{eqnarray*}
			\bar{Q}(\xi)\coloneqq \<\n h, \n \zeta\>+\left(\bar{A}+\frac{1}{2}M\right)\zeta(\xi)-\cot\theta h, \quad \xi\in \Omega_{\delta_{0}},
		\end{eqnarray*}
		where 	$\bar{A}>0$ is a positive constant. Adapting the similar argument as above, we get $$\bar{Q}(\xi)\leq 0 ~\text{ in }   \Omega_{\delta_{0}},$$ and further
		\begin{eqnarray}\label{inf estimate}
			\inf\limits_{\partial \C_{\theta}}h_{\mu\mu}\geq -C-\frac{1}{2}M.
		\end{eqnarray}
Then \eqref{sup estimate} and \eqref{inf estimate} yield
		\begin{eqnarray*}
			\sup\limits_{\partial \C_{\theta}}|h_{\mu\mu}|\leq C.
		\end{eqnarray*}
Together with Lemma  \ref{c2 to normal}, we obtain
		\begin{eqnarray*}
			\sup\limits_{\C_{\theta}}|\n^{2} h|\leq C.
		\end{eqnarray*}
		Thus, we complete the proof of Lemma \ref{c2 est}. 

\end{proof}

\begin{proof}[\textbf{Proof of Theorem \ref{priori estimate}}]
From Lemma \ref{c0 est}, Lemma \ref{C1 est} and Lemma \ref{c2 est},   we obtain
\begin{eqnarray*}
	\|h\|_{C^{2}(\C_\theta)}\leq C.
\end{eqnarray*}
Applying the theory of fully nonlinear second order uniformly elliptic equations with oblique derivative boundary condition (cf. \cite[Theorem 1.1]{LT} and \cite{LTU} etc.), then the  Schauder estimates and higher order estimates as in \eqref{sch est} follow, thus we complete the proof of Theorem \ref{priori estimate}.
 \end{proof}

\section{Convexity}\label{sec-convex}
The primary goal of this section is to obtain a strictly convex solution of Eq. \eqref{eq-k}. Based on the celebrated result of Guan-Ma \cite{GM} and with suitable assumptions on the function $f$, we show that the convex solution of Eq. \eqref{eq-k} is strictly convex.  To begin with, we establish the following key lemma.
\begin{lem}\label{lem-def}
  Let $\theta\in (0, \frac{\pi}{2})$ and  $\tilde{F}(W) \coloneqq \sigma_{k}(W)$.  Suppose that  $h\in C^{2}(\C_{\theta})$ is a  solution of Eq. \eqref{eq-k}, and assume that $W$  is positive semi-definite on $\C_\theta$.  Suppose there exists a positive constant $C_{0}$ such that for a fixed integer $l $ $(k\leq l\leq n-1)$, $\sigma_{l}(W)\geq C_{0}$ for all $\xi\in \C_{\theta}$. If $f\in \mathcal{L}_{-\frac{1}{k}}$  and $\n_{\mu}f\geq 0$ on $\partial \C_{\theta}$, denote $$\phi\coloneqq \sigma_{l+1}(W(\xi)),$$ then there exist constants $C_{1}, C_{2}$ depending only on $n, k, C_{0}, \|h\|_{C^{3}},$ and $\|f\|_{C^{2}}$, such that $\phi$ satisfies 
    \begin{eqnarray}
        & \tilde{F}^{ij}\phi_{ij}\leq C_{1}|\n \phi|+C_{2}\phi,  & \quad \rm{in} ~~ \C_{\theta} ,\label{Diff-ine}\\
        &\n_{\mu} \phi\geq 0, & \quad {\rm{on}}~\partial \C_{\theta}.\label{boundary ine}
    \end{eqnarray}
  \end{lem}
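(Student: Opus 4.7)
The plan is to establish the interior differential inequality \eqref{Diff-ine} and the boundary inequality \eqref{boundary ine} separately, following the constant--rank machinery of Guan--Ma \cite{GM} adapted to the spherical cap $\C_\theta$ with a Robin boundary.

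For the interior inequality, I would fix an arbitrary point $\xi\in\C_\theta$ and work in an orthonormal frame $\{e_i\}$ diagonalizing $W(\xi)$ with ordered eigenvalues $\lambda_1\geq\cdots\geq\lambda_n\geq 0$. The hypothesis $\sigma_l(W)\geq C_0$ together with the uniform $C^2$ bound on $h$ forces a spectral gap: the top $l$ eigenvalues satisfy $\lambda_l\geq c_0(C_0,\|h\|_{C^2})>0$, while $\lambda_{l+1},\ldots,\lambda_n$ are small whenever $\phi$ is small, so the indices split into a ``good'' set $G=\{1,\ldots,l\}$ and a ``bad'' set $B=\{l+1,\ldots,n\}$. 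I then compute $\tilde F^{ij}\phi_{ij}$ directly, using (i) the chain rule $\phi_i=\sigma_{l+1}^{ab}W_{ab;i}$ and $\phi_{ij}=\sigma_{l+1}^{ab,cd}W_{ab;i}W_{cd;j}+\sigma_{l+1}^{ab}W_{ab;ij}$; (ii) the commutation formula on the round cap $\C_\theta$ (sectional curvature $1$) to interchange $W_{ab;ij}$ with $W_{ij;ab}$ modulo bounded zeroth-order terms in $W$ and $\nabla h$; and (iii) the twice-differentiated equation, which yields $\sigma_k^{ab}W_{ab;ii}=f_{ii}-\sigma_k^{ab,cd}W_{ab;i}W_{cd;i}$. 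The two convexity hypotheses---concavity of $\sigma_k^{1/k}$ in $W$ and convexity of $f^{-1/k}$ on $\C_\theta$---combine in the Guan--Ma fashion to produce a nonnegative quadratic form in $W_{ab;i}$ that, after applying Newton--Maclaurin (Lemma \ref{prop2.3}) with the denominators controlled by $\sigma_l(W)\geq C_0$, absorbs all error terms whose indices lie in $G$. The remaining $B$-indexed contributions are explicitly bounded by $C_1|\nabla\phi|+C_2\phi$ since the $B$-eigenvalues are themselves controlled by $\phi$.

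For the boundary inequality, on $\partial\C_\theta$ I would choose an orthonormal frame with $e_n=\mu$. The key input is \cite[Proposition~2.8]{MWW-AIM}: $h_{\alpha n}=0$ for $1\le\alpha\le n-1$, which forces $W$ to be block diagonal on $\partial\C_\theta$, $W=W'\oplus(W_{nn})$, where $W'$ is the tangential $(n-1)\times(n-1)$ block. After a further rotation within the tangential directions to diagonalize $W'$, I obtain
\[
\nabla_\mu\phi \;=\; \phi_n \;=\; \sum_{\alpha=1}^{n-1}\sigma_l(W|\alpha)\,W_{\alpha\alpha;n}\;+\;\sigma_l(W')\,W_{nn;n}.
\]
The tangential third derivatives $h_{\alpha\alpha n}|_{\p\C_\theta}$ are produced by differentiating the Robin condition $h_n=\cot\theta\cdot h$ twice in the $e_\alpha$ direction and applying the spherical commutation formula (as in \eqref{third comm}), yielding expressions linear in $W_{\alpha\alpha}$ plus lower-order terms. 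The purely normal piece $h_{nnn}|_{\p\C_\theta}$ is extracted from the once-differentiated equation $\sigma_k^{ij}(W)W_{ij;n}=\nabla_\mu f$ together with the tangential structure just obtained. Plugging in and invoking $\nabla_\mu f\geq 0$ at the last step delivers $\nabla_\mu\phi\geq 0$.

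I expect the main obstacle to be the bookkeeping in the boundary step. In contrast to the closed setting of \cite{GM}, the Robin condition plus the nontrivial second fundamental form of $\partial\C_\theta$ in $\C_\theta$ generates extra curvature corrections when commuting third derivatives along the boundary, and one must verify that these corrections combine with $\nabla_\mu f\geq 0$ to produce the correct sign. The convexity hypothesis $f\in\mathcal{L}_{-1/k}$ is indispensable for the interior estimate, and the sign condition on $\nabla_\mu f$ is indispensable for the boundary; both hypotheses are tight.
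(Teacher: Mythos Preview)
Your proposal is correct and follows essentially the same route as the paper: the interior inequality \eqref{Diff-ine} is precisely \cite[Lemma~4.1]{GM} (the paper simply cites it rather than re-deriving the good/bad splitting you sketch), and your boundary argument---block-diagonalizing $W$ on $\partial\C_\theta$ via $h_{\alpha n}=0$, computing $W_{\alpha\alpha;n}=\cot\theta(W_{nn}-W_{\alpha\alpha})$ from the Robin condition, and eliminating $W_{nn;n}$ through the once-differentiated equation together with $\nabla_\mu f\geq 0$---matches the paper exactly. The one step you flag but do not spell out is the final sign check: after these substitutions the paper reduces $\nabla_\mu\phi\geq 0$ to proving $(W_{nn}-W_{\alpha\alpha})\bigl(H^{\alpha\alpha}\tilde F^{nn}-\tilde F^{\alpha\alpha}H^{nn}\bigr)\geq 0$ for each $\alpha$, which is handled by an explicit factorization plus Newton--Maclaurin (Lemma~\ref{prop2.3}) with a short case split on whether $l=n-1$, $2\le k\le l\le n-2$, or $k=1$.
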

  \begin{proof}
      The differential inequality \eqref{Diff-ine} follows from \cite[Lemma~4.1]{GM} and $f\in \mathcal{L}_{-\frac{1}{k}}$. We just need to prove the boundary condition \eqref{boundary ine}.

     For any $\xi\in\partial \C_{\theta}$, according to \eqref{h1n}, we can choose an orthonormal frame $\{\{e_{\a}\}_{\a=1}^{n-1}, e_{n}=\mu\}$ around $\xi$ such that $\n^{2}h$ is diagonal at this point $\xi$. 
From $\n_{\mu}f\geq 0$ on $\partial\C_{\theta},$ we get
\begin{eqnarray}\label{one deri}
    0\leq \n_\mu f=\n_{\mu} \tilde F=\sum\limits_{\alpha=1}^{n-1}\tilde{F}^{\alpha\alpha}W_{\alpha\alpha n}+\tilde{F}^{nn}W_{nnn}, \quad \text{on}~\partial\C_{\theta}.
\end{eqnarray}
Denote $H(W)\coloneqq\sigma_{l+1}(W)$ and $H^{ij}\coloneqq\frac{\partial H}{\partial W_{ij}}$. Using the Gauss-Weingarten equation of $\p \C_\theta \subset \C_\theta$ and combining with \eqref{h1n}, we have 
		\begin{eqnarray*}
			\n_{\alpha \alpha n}h&=&(\n^3 h)(e_\a,e_\a,e_n)	=(\n_{e_\a}(\n^2 h))(e_\a,e_n)\\&=& \n_{e_\a}(\n^2h(e_\alpha,e_n))-\n^2h(\n_{e_\a}e_\a,e_n)-\n^2h(e_\a,\n_{e_\a}e_n)
			\\&=&\cot\theta h_{nn}-\cot\theta h_{\a\a}.
		\end{eqnarray*} 
In view of  the commutator formula \eqref{third comm}, we derive
\begin{eqnarray}\label{h11n}
	\n_{n\a\a}h=\n_{\a\a n}h-h_n=\cot\theta ( h_{nn}-  h_{\a\a}-  h), 
\end{eqnarray}together with $\n_n h=\cot\theta h$ on $\p \C_\theta$, it follows
\begin{eqnarray*}
    W_{\a\a n}= \cot\theta(h_{nn}-h_{\a\a})=\cot\theta(W_{nn}-W_{\a\a}), ~~1\leq \a\leq n-1.
\end{eqnarray*}
Combining this with \eqref{h11n} and \eqref{one deri}, 
\begin{eqnarray*}\n_{\mu} \phi&=&\sum\limits_{\alpha=1}^{n-1}H^{\alpha\alpha}W_{\alpha\alpha n}+H^{nn}W_{nnn}
    \geq \frac{1}{\tilde{F}^{nn}}\sum\limits_{\alpha=1}^{n-1}(H^{\alpha\alpha}\tilde{F}^{nn}-\tilde{F}^{\alpha\alpha}H^{nn})W_{\alpha\alpha n}\\
    &=&\frac{\cot\theta}{\tilde{F}^{nn}}\sum\limits_{\alpha=1}^{n-1}(W_{nn}-W_{\alpha\alpha})(H^{\alpha\alpha}\tilde{F}^{nn}-\tilde{F}^{\alpha\alpha}H^{nn}).
\end{eqnarray*}
We next conclude the proof by distinguishing the range of index $l\in \{1,2,\ldots, n-1 \}$.
\begin{enumerate}
    \item 
If $l=n-1$, by direct calculation, 
\begin{eqnarray*}
    H^{\alpha\alpha}\tilde{F}^{nn}-H^{nn}\tilde{F}^{\alpha\alpha}=(W_{nn}-W_{\alpha\alpha})\sigma_{n-2}(W|\alpha n)\sigma_{k-1}(W|\alpha n),
\end{eqnarray*}
this implies
\begin{eqnarray*}
\n_{\mu} \phi \geq \frac{\cot\theta}{\tilde{F}^{nn}}\sum\limits_{\alpha=1}^{n-1}(W_{nn}-W_{\alpha\alpha})^{2}\sigma_{n-2}(W|\alpha n)\sigma_{k-1}(W|\alpha n)\geq 0.
\end{eqnarray*}

\item If $2\leq k\leq l\leq n-2$, we obtain
\begin{eqnarray*}
    H^{\alpha\alpha}\tilde{F}^{nn}-H^{nn}\tilde{F}^{\alpha\alpha}=(W_{\alpha\alpha}-W_{nn}) \left(\sigma_{l}(W|\alpha n)\sigma_{k-2}(W|\alpha n)-\sigma_{l-1}(W|\alpha n)\sigma_{k-1}(W|\alpha n)\right),
\end{eqnarray*}
together with Lemma \ref{prop2.3}, it follows
\begin{eqnarray*}
  \n_{\mu} \phi&\geq &\frac{\cot\theta}{\tilde{F}^{nn}}\sum\limits_{\alpha=1}^{n-1}(W_{nn}-W_{\alpha\alpha})^{2}\cdot \left(\sigma_{l-1}(W|\alpha n)\sigma_{k-1}(W|\alpha n)-\sigma_{l}(W|\alpha n)\sigma_{k-2}(W|\alpha n)\right)\\
    &\geq &\frac{(n-1)(l-k+1)}{(k-1)(n-l-1)}\frac{\cot\theta }{\tilde{F}^{nn}}\sum\limits_{\alpha=1}^{n}(W_{nn}-W_{\alpha\alpha})^{2}\left(\sigma_{l-1}(W|\alpha n)\sigma_{k-1}(W|\alpha n)\right)\\
    &\geq& 0.
\end{eqnarray*}
\item If $1=k\leq l\leq n-2$, using  Lemma \ref{prop2.2}~(4), we conclude that 
\begin{eqnarray*}
\n_{\mu} \phi =\cot\theta \sum\limits_{\alpha=1}^{n-1}(W_{nn}-W_{\alpha\alpha})(H^{\alpha\alpha}-H^{nn})\geq 0.
\end{eqnarray*}
\end{enumerate}
This completes the proof.
 \end{proof}

As an application of Lemma \ref{lem-def}, we establish the following theorem.
\begin{thm}\label{convex thm}
 Let $\theta\in(0, \frac \pi 2)$.   Suppose that $h$ is a non-negative, convex solution of Eq. \eqref{eq-k}. If $f$ satisfies the assumptions in Lemma \ref{lem-def}, then $W$ is positive definite everywhere on $\C_{\theta}$.
\end{thm}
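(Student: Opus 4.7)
The plan is to combine Lemma~\ref{lem-def} with the strong maximum principle and Hopf boundary point lemma, together with a direct observation that $W$ is positive definite at the interior minimum of $h$.

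First, I would show that $\mathrm{rank}\,W(\xi_{0})=n$ at the interior minimum $\xi_{0}$ of $h$. After possibly applying a horizontal translation (which preserves $W$, since $\nabla^{2}\langle\xi,E_{\alpha}\rangle+\langle\xi,E_{\alpha}\rangle\sigma=0$), Lemma~\ref{c0 est} gives $h>0$ on $\C_{\theta}$. Since $\theta\in(0,\tfrac{\pi}{2})$, we have $\cot\theta>0$, so the Robin condition $h_{\mu}=\cot\theta\,h>0$ along $\partial\C_{\theta}$ rules out $\partial\C_{\theta}$ as the location of a minimum of $h$: at a boundary minimum one would need $h_{\mu}\leq 0$. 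Hence the minimum is attained at some interior $\xi_{0}\in\C_{\theta}\setminus\partial\C_{\theta}$, where $\nabla h(\xi_{0})=0$ and $\nabla^{2}h(\xi_{0})\geq 0$; this yields $W(\xi_{0})=\nabla^{2}h(\xi_{0})+h(\xi_{0})\sigma\geq h(\xi_{0})\sigma>0$, so in particular $\mathrm{rank}\,W(\xi_{0})=n$.

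Next, I would apply Lemma~\ref{lem-def} to force constant rank. Set $l_{0}\coloneqq\min_{\xi\in\C_{\theta}}\mathrm{rank}\,W(\xi)$. From $\sigma_{k}(W)=f>0$ we have $l_{0}\geq k$, and the first step gives $l_{0}\leq n$. Assume for contradiction that $l_{0}\leq n-1$. Since $\mathrm{rank}\,W\geq l_{0}$ everywhere, $\sigma_{l_{0}}(W)$ is a positive continuous function on the compact set $\C_{\theta}$, hence bounded below by some $C_{0}>0$. Lemma~\ref{lem-def} with $l=l_{0}$ then yields \eqref{Diff-ine} in $\C_{\theta}$ and \eqref{boundary ine} on $\partial\C_{\theta}$ for the non-negative function $\phi\coloneqq\sigma_{l_{0}+1}(W)$. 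At any point $\xi_{*}$ where $\mathrm{rank}\,W=l_{0}$, we have $\phi(\xi_{*})=0$. If $\xi_{*}$ is interior, the strong maximum principle applied to \eqref{Diff-ine} forces $\phi\equiv 0$; if $\xi_{*}\in\partial\C_{\theta}$, the Hopf boundary point lemma gives either $\phi\equiv 0$ or $\nabla_{\mu}\phi(\xi_{*})<0$, the latter contradicting \eqref{boundary ine}. Either way $\phi\equiv 0$ on $\C_{\theta}$, so $\mathrm{rank}\,W\leq l_{0}<n$ everywhere, contradicting $\mathrm{rank}\,W(\xi_{0})=n$ from the first step. Hence $l_{0}=n$, and $W>0$ throughout $\C_{\theta}$.

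The main technical point is justifying the strong maximum principle and Hopf boundary lemma for the inequality \eqref{Diff-ine}, which contains the nonstandard first-order term $C_{1}|\nabla\phi|$. This is a familiar ingredient of Guan-Ma constant rank arguments and can be handled in a standard way: since $\phi\geq 0$ is $C^{2}$ with $\phi(\xi_{*})=0$, near $\xi_{*}$ one has $|\nabla\phi|^{2}\leq C\phi$; alternatively, one absorbs $C_{1}|\nabla\phi|$ into a bounded drift $b^{i}\phi_{i}$ via $b^{i}=C_{1}\nabla^{i}\phi/|\nabla\phi|$ (suitably regularized near $\nabla\phi=0$), reducing \eqref{Diff-ine} to a standard linear uniformly elliptic inequality. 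Uniform ellipticity of $\tilde{F}^{ij}$ on $\{\mathrm{rank}\,W\geq k\}$ is immediate from $\tilde{F}^{ii}=\sigma_{k-1}(\lambda(W)\,|\,i)>0$ in the eigenbasis of $W$.
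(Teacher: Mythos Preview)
Your argument shares the main engine with the paper's proof: Lemma~\ref{lem-def} combined with the strong maximum principle and Hopf lemma to force $\phi=\sigma_{l_0+1}(W)\equiv 0$ on $\C_\theta$. The divergence is only in how you extract the final contradiction from $\phi\equiv 0$.

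The paper does \emph{not} look for a point where $W>0$. Instead, once $\sigma_{l+1}(W)\equiv 0$ (with $\sigma_l(W)>0$ everywhere), it applies the capillary Minkowski formula
\[
(n-l)\int_{\C_\theta} h\,\sigma_l(W)\,d\H^n \;=\; (l+1)\int_{\C_\theta} \ell\,\sigma_{l+1}(W)\,d\H^n \;=\; 0,
\]
and since $h\ge 0$ and $\sigma_l(W)>0$ everywhere, this forces $h\equiv 0$, hence $W\equiv 0$, contradicting $\sigma_k(W)=f>0$.

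Your alternative route has a genuine gap. The appeal to Lemma~\ref{c0 est} for $h>0$ is circular: that lemma is stated for \emph{strictly} convex solutions, and its positivity clause rests on the support-function interpretation of $h$ as arising from a convex capillary hypersurface, which already requires $W>0$ --- precisely the conclusion you are proving. Moreover, after the horizontal translation imposing \eqref{orth cond}, the shifted function $h+\sum_\alpha a_\alpha\langle\xi,E_\alpha\rangle$ need not remain non-negative, so the hypothesis $h\ge 0$ is no longer available either. Without $\min h>0$, at the interior minimum $\xi_0$ you only obtain $W(\xi_0)=\nabla^2 h(\xi_0)+h(\xi_0)\sigma\ge 0$; if $h(\xi_0)=0$ the equation $\sigma_k(W(\xi_0))=f(\xi_0)>0$ yields rank at least $k$ but not full rank. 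The Minkowski-formula step is exactly the clean, non-circular way the paper exploits the hypothesis $h\ge 0$.
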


\begin{proof} We argue by contradiction. If $W$ does not have full rank at some point, say at  $\xi_{0}\in \C_{\theta}$, then there exists an integer $l$ $(k\leq l\leq n-1)$ such that $\phi(\xi_{0})=\sigma_{l+1}(W(\xi_{0}))=0$ and $\sigma_{l}(W(\xi))>0 $ for all $\xi\in \C_{\theta}$. From  Lemma \ref{lem-def}, the maximum principle  and Hopf lemma imply   $$\phi=\sigma_{l+1}(W)\equiv 0.$$ Since $h\geq 0$ in  $\C_{\theta}$ and the Minkowski formulas (cf. \cite[Corollary~2.10]{MWWX}),
    \begin{eqnarray*}
      (n-l)\int_{\C_{\theta}}h(\xi)\sigma_{l}(W)d\H^{n}=(l+1)\int_{\C_{\theta}}\ell(\xi)\sigma_{l+1}(W)d\H^{n},
    \end{eqnarray*}
we conclude that $h\equiv 0$ in $\C_{\theta}$, which is a contradiction to Eq. \eqref{eq-k}. Hence, the conclusion follows.
\end{proof}

\section{Proof of Theorem \ref{main theorem}}
In this section, we use the standard continuity method to complete the proof of Theorem \ref{main theorem}. Since  $f$ is connected to $1$ in $\mathcal{L}_{-\frac{1}{k}}$, there is a continuous path  $p(t, \xi)$ satisfying $p(0, \xi)=1, p(1, \xi)=f(\xi)$, and $p(t, \xi)\in \mathcal{L}_{-\frac{1}{k}}$ for any $0\leq t\leq 1$. We define the set
	\begin{eqnarray*}
		\mathcal{S} \coloneqq  \left\{ t\in[0,1] \mid  \sigma_{k}(\n^2 h+h\s)=p(\xi, t)   \text{ in } \C_\theta, \text{ and } \n_\mu h=\cot\theta h \text{ on } \p \C_\theta  \text{ is solvable}  \right\}.
	\end{eqnarray*}It is easy to see that $0\in \mathcal{S}$, since the function $h=\binom{n}{k}^{-\frac{1}{k}}\ell(\xi)$ is a solution. Next, we will apply the implicit function theorem to show the openness of $\mathcal{S}$, while the closedness follows from Theorem \ref{priori estimate} and Theorem \ref{convex thm}. 

 Let $$W^{m,2}_{N}(\C_\theta)\coloneqq \left\{ h\in W^{m,2}(\C_\theta) \mid  ~\n_\mu h=\cot\theta h \text{ on } \p \C_\theta\right\},$$  be the Sobolev space, and let $m\in \NN$ sufficiently large such that $W^{m,2}_{N}(\C_\theta)\subset C^4_{N}(\C_\theta)$, where $$C^{4}_{N}(\C_\theta)\coloneqq \{ h\in C^4(\C_\theta) \mid ~ \n_\mu h=\cot\theta h  \text{ on } \p \C_\theta\},$$ 
and the nonlinear operator $G$ in $W^{m,2}_{N}(\C_{\theta}) $ as
	\begin{eqnarray*}
		G:&W^{m,2}_{N}(\C_{\theta}) &\to \RR\\&
		h &\mapsto \sigma_{k}(\n^2 h +h\s).
	\end{eqnarray*}
  \begin{prop}\label{pro open}
     The set $\mathcal{S}$ is open in $[0,1]$.
 \end{prop}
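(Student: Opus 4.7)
The plan is to apply the implicit function theorem at any fixed $t_0 \in \mathcal{S}$. Let $h_0 \in C^{m+1,\gamma}(\C_\theta) \cap W^{m,2}_N(\C_\theta)$ be the corresponding strictly convex solution, so $G(h_0) = p(\cdot, t_0)$. The linearization of $G$ at $h_0$ in the direction $\eta$ is
$$L\eta = \sigma_k^{ij}(W_{h_0})\,(\eta_{ij} + \eta\, \delta_{ij}),$$
where $W_{h_0} = \n^2 h_0 + h_0\, \s$ is positive definite. Hence $L$ is uniformly elliptic, and together with the oblique Robin condition $\n_\mu \eta = \cot\theta \, \eta$ built into $W^{m,2}_N$, classical theory for elliptic oblique boundary value problems (e.g.\ \cite{LT, LTU}) gives that $L \colon W^{m,2}_N(\C_\theta) \to W^{m-2,2}(\C_\theta)$ is a Fredholm operator of index zero.

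The main obstacle is a non-trivial kernel of $L$ coming from the horizontal-translation invariance of the problem. For each $1 \le \alpha \le n$, the coordinate function $\<\xi, E_\alpha\>$ satisfies $\n^2 \<\xi, E_\alpha\> + \<\xi, E_\alpha\>\, \s = 0$ on $\C_\theta$ and $\n_\mu \<\xi, E_\alpha\> = \cot\theta \<\xi, E_\alpha\>$ on $\p \C_\theta$ (the identities already invoked in the proof of Proposition~\ref{pro ortho}), and hence lies in $\ker L$. Dually, Proposition~\ref{pro ortho} shows $\operatorname{Im}(L) \subseteq V'$, where
$$V' \coloneqq \Bigl\{g \in W^{m-2,2}(\C_\theta) : \int_{\C_\theta} g\, \<\xi, E_\alpha\>\, d\H^n = 0,\ 1 \le \alpha \le n \Bigr\},$$
and the path satisfies $p(\cdot, t) \in V'$ for every $t$ since $p(\cdot, t) \in \mathcal{L}_{-1/k}$. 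To quotient by these translations, I would restrict the source to
$$V \coloneqq \Bigl\{h \in W^{m,2}_N(\C_\theta) : \int_{\C_\theta} h\, \<\xi, E_\alpha\>\, d\H^n = 0,\ 1 \le \alpha \le n \Bigr\}.$$

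The key analytic step is to show that $L|_V \colon V \to V'$ is an isomorphism. Integration by parts using the divergence-free identity $\n_j \sigma_k^{ij}(W_{h_0}) = 0$ (Lemma~18.30 of \cite{Ben}, cited in Proposition~\ref{pro ortho}), together with the reduction $\sigma_k^{ij}(W_{h_0})\mu_j = \sigma_k^{nn}(W_{h_0})\delta_{in}$ on $\p \C_\theta$ in an orthonormal frame diagonalizing $W_{h_0}$ with $\mu = e_n$ (as in the boundary computation of Proposition~\ref{pro ortho}), shows that the boundary contribution in the bilinear pairing $\int_{\C_\theta}(\varphi\, L\eta - \eta\, L\varphi)\, d\H^n$ vanishes whenever both $\eta, \varphi$ satisfy the Robin condition. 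Hence $L$ is formally self-adjoint on $W^{m,2}_N(\C_\theta)$, yielding $\operatorname{Im}(L) = (\ker L)^\perp$ in the $L^2$-pairing and $\dim \ker L = \dim \operatorname{coker} L$. I expect the hard part to be the identification $\ker L = \operatorname{span}\{\<\xi, E_\alpha\>\}_{\alpha=1}^n$, carried out by adapting the kernel characterization of Guan-Ma~\cite{GM} to the Robin setting (alternatively, one may propagate the identification from the constant solution at $t=0$, where the linearization reduces to $\De + nI$ on $\C_\theta$, along the path by Fredholm continuity). Granted this identification, $\ker(L|_V) = 0$ and the index-zero property together with $L(W^{m,2}_N) \subseteq V'$ upgrade $L|_V$ to an isomorphism onto $V'$.

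Finally, applying the implicit function theorem to $F(h, t) \coloneqq G(h) - p(\cdot, t)$ on $V \times [0, 1]$ at $(h_0, t_0)$ produces a smooth family of solutions $h_t \in V$ for $t$ in a neighborhood of $t_0$. Strict positivity of $W_{h_t}$ persists for such $t$ by continuity, and Schauder regularity for oblique boundary problems (\cite{LT, LTU}) upgrades $h_t$ to $C^{m+1,\gamma}(\C_\theta)$. Thus $t \in \mathcal{S}$ for all $t$ in a neighborhood of $t_0$, proving that $\mathcal{S}$ is open.
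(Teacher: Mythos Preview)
Your outline matches the paper's approach closely: self-adjointness of $L_h$ on $W^{m,2}_N(\C_\theta)$ via integration by parts (the paper's Lemma~\ref{self adj of L}), identification of the kernel with $\operatorname{span}\{\<\xi,E_\alpha\>\}_{\alpha=1}^n$ (the paper's Lemma~\ref{kernel of L}), and then the implicit function theorem. The step you flag as the hard part---the kernel characterization---is indeed where the paper does the real work: it uses the mixed-discriminant/polarization identity from \cite{GMTZ}, writing $\int_{\C_\theta} v\,L_h v\,d\H^n$ as $\int_{\C_\theta} h\,\sigma_k(A[v],A[v],W,\ldots,W)\,d\H^n$ via the symmetry of the mixed volume form (\cite[Proposition~2.9]{MWWX}), and then invoking the positivity argument of \cite[Proposition~3.4]{GMTZ} to force $A[v]=0$. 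Your first suggestion (adapt Guan--Ma) points in exactly this direction.

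Your alternative suggestion---propagate the kernel identification from $t=0$ by Fredholm continuity---does not work as stated. Fredholm index is locally constant, but kernel dimension is only upper semi-continuous along a continuous family of Fredholm operators; nothing prevents the kernel from jumping above $n$ at some $t>0$ without a separate argument. So the paper's direct computation at each $h$ is genuinely needed, not just a convenience.
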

 In order to prove the openness, we notice that the linearized operator $L_h$ of $G$ at $h$ is
	\begin{eqnarray*}
		L_h(v)\coloneqq \frac{\p \sigma_{k}(W)}{\p W_{ij}} (v_{ij}+v\delta_{ij}).
	\end{eqnarray*}
 It is easy to observe that $\<\xi,E_\a\>\in C^4_{N}(\C_\theta)$ and satisfies  \begin{eqnarray*}
	L_h( \<\xi,E_\a\>)=0, ~~\text{ for all } 1\leq \a\leq n.\end{eqnarray*} Namely, the function 	$ \<\xi,E_\a\>$ $(\a=1,2,\cdots, n)$ belongs to the kernel of $L_h$.
The following proposition shows that $L_h$ is a self-adjoint operator.
	\begin{lem}\label{self adj of L}
		For any $h,v,w\in W^{m,2}_{N}(\C_{\theta})$, there holds 
		\begin{eqnarray*}
			\int_{\C_\theta} w L_h vd\H^n=	\int_{\C_\theta} v L_h wd\H^n.
		\end{eqnarray*}
	\end{lem}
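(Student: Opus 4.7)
The strategy is to reduce $\int_{\C_\theta}(wL_h v - v L_h w)\,d\H^n$ to a boundary integral by integrating by parts twice, and then to kill that boundary term using a combination of the Robin condition defining $W^{m,2}_N(\C_\theta)$ and the umbilicity of $\partial \C_\theta \subset \C_\theta$. The mechanism is essentially the same as that already employed in the proof of Proposition \ref{pro ortho}.

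First, using the explicit form $L_h v = \sigma_k^{ij}(W)(v_{ij}+v\delta_{ij})$, the zeroth-order contributions cancel pointwise:
\begin{equation*}
wL_h v - v L_h w = \sigma_k^{ij}(W)(w v_{ij} - v w_{ij}).
\end{equation*}
Integrating by parts twice and using the divergence-free property $\sum_i (\sigma_k^{ij}(W))_{;i}=0$ (cf.\ \cite[Lemma 18.30]{Ben}), the symmetric interior bilinear term $\sigma_k^{ij}v_i w_j$ cancels against itself, leaving
\begin{equation*}
\int_{\C_\theta}\sigma_k^{ij}(W)(w v_{ij} - v w_{ij})\,d\H^n = \int_{\partial \C_\theta}\sigma_k^{ij}(W)\bigl(w v_i - v w_i\bigr)\mu^j\,d\H^{n-1}.
\end{equation*}
It remains to show this boundary integral vanishes.

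At an arbitrary point of $\partial \C_\theta$, I would choose an orthonormal frame $\{e_\alpha\}_{\alpha=1}^{n-1}\cup\{e_n=\mu\}$ in which the tangential block $(W_{\alpha\beta})_{\alpha,\beta\leq n-1}$ is diagonal. By \cite[Proposition 2.8]{MWW-AIM}, the Robin condition $\n_\mu h = \cot\theta\, h$ forces $h_{\alpha n}=0$ for $1\leq \alpha\leq n-1$ on $\partial \C_\theta$, and hence $W_{\alpha n}=0$ there. Therefore $W$ itself is diagonal in this frame on $\partial \C_\theta$, so $\sigma_k^{\alpha n}(W)=0$ for $\alpha<n$, and with $\mu^j=\delta^j_n$ the boundary integrand collapses to $\sigma_k^{nn}(W)(w v_\mu - v w_\mu)$. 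Since $v,w\in W^{m,2}_N(\C_\theta)$ both satisfy the Robin condition $v_\mu=\cot\theta\, v$ and $w_\mu=\cot\theta\, w$, the difference $w v_\mu - v w_\mu$ vanishes identically on $\partial \C_\theta$, and the claim follows.

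The only delicate point is the vanishing of the boundary term: the Robin condition on $v,w$ alone would leave unwanted tangential pieces $\sigma_k^{\alpha n}(W)(w v_\alpha - v w_\alpha)$, and it is the off-diagonal vanishing $W_{\alpha n}=0$ (which is itself a consequence of the Robin condition on $h$ combined with the umbilical geometry of $\partial \C_\theta$) that rules these out. Once that geometric fact is invoked, the symmetry between $v$ and $w$ closes the argument.
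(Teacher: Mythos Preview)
Your argument is correct and follows essentially the same route as the paper: integrate by parts using the divergence-free property of $\sigma_k^{ij}(W)$, then kill the boundary term via $W_{\alpha n}=0$ on $\partial\C_\theta$ (from the Robin condition on $h$) together with the Robin condition on $v,w$. The only cosmetic difference is that the paper computes $\int_{\C_\theta} w L_h v\,d\H^n$ directly and observes that the resulting expression is symmetric in $v$ and $w$, whereas you compute the difference and show it vanishes.
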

\begin{proof}
Since  $\n_{\mu}v=\cot\theta v$ on $\partial\C_{\theta}$, and integrating by parts twice, we have
		\begin{eqnarray*}
			\int_{\C_\theta} w L_h vd\H^n&=&\int_{\C_\theta} w  \frac{\p \sigma_{k}(W)}{\p W_{ij}} (v_{ij}+v\delta_{ij}) d\H^n
			\\&=& \int_{\p \C_\theta} w  \frac{\p \sigma_{k}(W)}{\p W_{ij}}  v_i\mu^j d\H^{n-1} + \int_{\C_\theta}\frac{\p \sigma_{k}(W)}{\p W_{ij}} \left( vw   \delta_{ij} -w_j v_i \right) d\H^n
			\\&=& \int_{\p \C_\theta} \cot\theta wv  \frac{\p \sigma_{k}(W)}{\p W_{nn}} d\H^{n-1} + \int_{\C_\theta}\frac{\p \sigma_{k}(W)}{\p W_{ij}} \left( vw   \delta_{ij} -w_j v_i \right) d\H^n,
		\end{eqnarray*}which is symmetric with respect to $w$ and $v$. Hence, we complete the proof of Lemma \ref{self adj of L}.
	\end{proof}
	
	The following Corollary is an immediate consequence of Lemma \ref{self adj of L}.
	\begin{cor}
		For any $h\in W^{m,2}_{N}(\C_{\theta})$, there holds
		\begin{eqnarray*}
			\int_{\C_\theta} \<\xi,E_\a\> \sigma_{k}(W) d\H^n=0,~ 1\leq \a\leq n.
		\end{eqnarray*}
	\end{cor}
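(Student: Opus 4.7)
The plan is to deduce the corollary from Lemma \ref{self adj of L} by taking the two test functions to be $h$ itself and the coordinate function $\langle \xi, E_\alpha\rangle$. For this to be legitimate, I first need to check that $\langle \xi, E_\alpha\rangle$ actually sits inside the Sobolev space $W^{m,2}_N(\C_\theta)$, which amounts to verifying the Robin boundary condition $\nabla_\mu \langle \xi, E_\alpha\rangle = \cot\theta \langle \xi, E_\alpha\rangle$ on $\partial\C_\theta$. This identity was already recorded in the proof of Proposition \ref{pro ortho}, coming from \eqref{co-normal bundle}, so no new work is needed here.

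Next I would compute the linearized operator on each of the two test functions. Using the homogeneity of $\sigma_k$ together with Euler's identity (equivalently, Lemma \ref{prop2.2}(2) applied to the eigenvalues of $W$), I get
\begin{equation*}
L_h(h) \;=\; \frac{\partial \sigma_k(W)}{\partial W_{ij}}(h_{ij}+h\,\delta_{ij}) \;=\; \frac{\partial \sigma_k(W)}{\partial W_{ij}}W_{ij} \;=\; k\,\sigma_k(W).
\end{equation*}
On the other hand, the simple Hessian identity $\langle \xi, E_\alpha\rangle_{ij}+\langle \xi, E_\alpha\rangle\,\delta_{ij}=0$ on $\C_\theta$ (already used in the proof of Proposition \ref{pro ortho}) gives $L_h(\langle \xi, E_\alpha\rangle) = 0$ for every $1\le \alpha\le n$.

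Finally, applying Lemma \ref{self adj of L} with $w = \langle \xi, E_\alpha\rangle$ and $v = h$ yields
\begin{equation*}
k\int_{\C_\theta} \langle \xi, E_\alpha\rangle\, \sigma_k(W)\, d\H^n \;=\; \int_{\C_\theta} \langle \xi, E_\alpha\rangle\, L_h(h)\, d\H^n \;=\; \int_{\C_\theta} h\, L_h(\langle \xi, E_\alpha\rangle)\, d\H^n \;=\; 0,
\end{equation*}
which is precisely the desired orthogonality. There is no real obstacle to overcome; the only subtle point is making sure that $\langle \xi, E_\alpha\rangle$ belongs to $W^{m,2}_N(\C_\theta)$ so that Lemma \ref{self adj of L} is directly applicable, and this is immediate from the discussion in Proposition \ref{pro ortho}.
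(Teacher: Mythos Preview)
Your proof is correct and follows essentially the same approach as the paper: both arguments apply the self-adjointness Lemma \ref{self adj of L} with $v=h$ and $w=\langle\xi,E_\alpha\rangle$, use that $\langle\xi,E_\alpha\rangle$ lies in the kernel of $L_h$, and identify $L_h(h)=k\,\sigma_k(W)$ via Lemma \ref{prop2.2}(2). You simply spell out a few verifications (membership in $W^{m,2}_N(\C_\theta)$, the Euler identity) that the paper leaves implicit or cites more tersely.
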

	
	\begin{proof}
		Since  $w_\a\coloneqq \<\xi,E_\alpha\>$  ($1\leq \a\leq n)$ belongs to the kernel of $L_h$, Lemma \ref{self adj of L} implies
		$$\int_{ \C_\theta} w_\a L_h(h) d\H^{n}= \int_{ \C_\theta}h L_h(w_\a)d\H^{n}=0.$$
        Together with Lemma \ref{prop2.2} (2), the assertion follows.
	\end{proof}
 Now, by following the argument in \cite[Section~2]{GMTZ}, we will show that the kernel of the linearized operator $L_h$ consists only of functions in the span of $\left\{\<\xi, E_{1}\>, \cdots, \<\xi, E_{n}\> \right\}$.
	\begin{lem}\label{kernel of L}
		Let $v\in W^{m,2}_{N}(\C_{\theta})$ satisfy $L_h(v)=0$, and $W=(h_{ij}+h\s_{ij})>0$. Then 
		\begin{eqnarray*}
			v=\sum_{\a=1}^n a_\a \<\xi, E_{\alpha}\>,
		\end{eqnarray*}for some constants $a_1,a_2,\ldots, a_n \in \RR$.
	\end{lem}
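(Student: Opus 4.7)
First, the $n$ functions $\langle \xi,E_\alpha\rangle$ for $1\leq \alpha\leq n$ do belong to $\ker L_h$. Indeed, since $\langle e,E_\alpha\rangle=0$ for these $\alpha$, each $\langle \xi,E_\alpha\rangle$ restricts to a linear function on the unit sphere $\C_\theta$ that vanishes at its center $\cos\theta\,e$, so $\nabla^2\langle \xi,E_\alpha\rangle+\langle \xi,E_\alpha\rangle\sigma\equiv 0$, giving $L_h\langle \xi,E_\alpha\rangle=0$. The Robin boundary condition $\nabla_\mu\langle \xi,E_\alpha\rangle=\cot\theta\,\langle \xi,E_\alpha\rangle$ was verified in the proof of Proposition \ref{pro ortho}. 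The content of the lemma is the reverse inclusion, and here is my plan.

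\textbf{Step 1 (Vector-field reformulation).} Given $v\in\ker L_h$, associate the $\mathbb{R}^{n+1}$-valued function
\[
Y_v(\xi):=\nabla v(\xi)+v(\xi)\,\tilde\nu(\xi),\qquad \tilde\nu(\xi):=T^{-1}(\xi)=\xi-\cos\theta\,e,
\]
on $\C_\theta$. The cap $\C_\theta$ is a unit sphere centered at $\cos\theta\,e$ with outward unit normal $\tilde\nu$ and shape operator equal to the identity. Using the Gauss formula $D_{e_i}e_j=\nabla_{e_i}e_j-\delta_{ij}\tilde\nu$ and $D_{e_i}\tilde\nu=e_i$ in a local orthonormal frame, one computes
\[
D_{e_i}Y_v=V_{ij}e_j,\qquad V_{ij}:=v_{ij}+v\delta_{ij},
\]
so that $Y_v$ is a constant vector in $\mathbb{R}^{n+1}$ if and only if $V\equiv 0$, and moreover $L_hv=F^{ij}V_{ij}$ with $F^{ij}=\partial\sigma_k/\partial W_{ij}$ positive definite (since $W>0$).

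\textbf{Step 2 (Key step: $V\equiv 0$).} This is the crux. From $F^{ij}V_{ij}=0$ and the concavity of $\sigma_k^{1/k}$ on the positive cone we get the pointwise inequality $\sigma_k^{1/k}(W\pm tV)\leq\sigma_k^{1/k}(W)$ for admissible $t$. To promote this to $V\equiv 0$, I plan to integrate along the one-parameter family $h_t=h+tv$ using the capillary Minkowski identity $(n-k)\int_{\C_\theta}h\sigma_k(W)=(k+1)\int_{\C_\theta}\ell\sigma_{k+1}(W)$ from \cite[Corollary~2.10]{MWWX}, combine with the self-adjoint identity of Lemma \ref{self adj of L} (taking $w=v$), and invoke the equality/rigidity case of the capillary Alexandrov--Fenchel inequalities (\cite[Theorem~1.2]{MWWX}): since $L_hv=0$ makes the first-order variation of $\sigma_k(W_t)$ vanish, the rigidity statement forces the infinitesimal deformation $Y_v$ to be generated by a rigid motion preserving the capillary structure, i.e., $V\equiv 0$.

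\textbf{Steps 3 \& 4 (Extracting $v$ and applying Robin).} Once $V\equiv 0$, Step 1 gives $Y_v\equiv a\in\mathbb{R}^{n+1}$, hence
\[
v(\xi)=\langle Y_v,\tilde\nu\rangle=\langle a,\xi\rangle-\cos\theta\langle a,e\rangle.
\]
Parametrizing $\xi=(x_1,\ldots,x_n,0)\in\p\C_\theta$ with $|x|=\sin\theta$, the outward co-normal is $\mu_j=\cot\theta\,x_j$ for $j\leq n$ and $\mu_{n+1}=-\sin\theta$. A direct calculation shows that the Robin condition $\nabla_\mu v=\cot\theta\,v$ reduces to $a_{n+1}=0$, i.e.\ $\langle a,e\rangle=0$; then the constant shift $-\cos\theta\langle a,e\rangle$ also vanishes and $v=\sum_{\alpha=1}^n a_\alpha\langle \xi,E_\alpha\rangle$, as desired. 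The main obstacle is Step~2: the pointwise concavity inequality and the self-adjoint identity are individually inconclusive, so one must package them together with the capillary Alexandrov--Fenchel rigidity to eliminate non-translational kernel elements.
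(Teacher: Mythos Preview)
Your Steps 1, 3, and 4 are correct and coincide with the paper's argument: the vector field $Y_v$ satisfies $D_{e_i}Y_v=V_{ij}e_j$, so $V\equiv 0$ forces $Y_v$ constant, and the Robin condition then kills the $E_{n+1}$-component. The gap is Step~2, where your plan does not yield $V\equiv 0$. Taking $w=v$ in Lemma~\ref{self adj of L} is a tautology, the Minkowski identity you quote relates different quermassintegrals of a \emph{single} body and gives no information about $V$, and the rigidity of \cite[Theorem~1.2]{MWWX} characterizes equality in an isoperimetric-type inequality between quermassintegrals; it does not say anything about kernel elements of $L_h$ for an arbitrary convex $h$. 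The sentence ``the rigidity statement forces the infinitesimal deformation $Y_v$ to be generated by a rigid motion'' is a heuristic, not an argument.

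What the paper actually uses (following \cite{GMTZ}) is the \emph{polarized} version of the self-adjointness, namely the symmetry of mixed discriminants under integration with the Robin boundary condition (\cite[Proposition~2.9]{MWWX}):
\[
0=\int_{\C_\theta} v\,L_h v\,d\H^n
= k\int_{\C_\theta} v\,\sigma_k(V,W,\ldots,W)\,d\H^n
= k\int_{\C_\theta} h\,\sigma_k(V,V,W,\ldots,W)\,d\H^n.
\]
The point is that one can cycle the ``support function'' slot from $v$ to $h$; Lemma~\ref{self adj of L} is only the diagonal case of this identity. On the other hand, your pointwise concavity observation, carried to second order, gives G{\aa}rding's inequality
\[
\sigma_k(W)\,\sigma_k(V,V,W,\ldots,W)\le \bigl(\sigma_k(V,W,\ldots,W)\bigr)^2=0
\quad\text{(since }L_hv=0\text{)},
\]
so $\sigma_k(V,V,W,\ldots,W)\le 0$ everywhere. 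Combined with $h>0$ from Lemma~\ref{c0 est}, the integral identity forces $\sigma_k(V,V,W,\ldots,W)\equiv 0$; equality in G{\aa}rding then gives $V=cW$ pointwise, and $\sigma_k(V,W,\ldots,W)=0$ with $\sigma_k(W)>0$ yields $c=0$. This is the missing mechanism: a \emph{pointwise} mixed-discriminant inequality coupled with the \emph{multilinear} integral symmetry, not the geometric capillary Alexandrov--Fenchel rigidity you invoke.
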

 \begin{proof}
  Let $\{e_i\}_{i=1}^n$ be a local orthonormal frame field on $\C_\theta$ and $e_{n+1}\coloneqq \xi+\cos\theta E_{n+1}$  the unit outward normal of $\C_\theta\subset \ol{\RR^{n+1}_+}$ such that $\{e_i\}_{i=1}^{n+1}$ forms a positive-oriented orthonormal frame with respect to the standard Euclidean metric in $\ol{\RR^{n+1}_+}$.  Let $\{\omega^{1},\cdots, \omega^{n+1}\}$ be the dual $1$-form of $\{e_{1},\cdots, e_{n}, e_{n+1}\}$. For any $v\in C^{4}_{N}(\C_{\theta})$, we consider the vector-valued functions 
     $Z=\sum\limits_{i=1}^{n}v_{i}e_{i}+ve_{n+1}$, then $v=\<Z, e_{n+1}\>$ and 
     \begin{eqnarray*}
         dZ=(v_{ij}+v\delta_{ij})e_{i}\omega^{j},
     \end{eqnarray*}
     Let $X= \sum_{i=1}^n h_{i}e_{i}+h e_{n+1}$, then 
     \begin{eqnarray*}
         dX=(h_{ij}+h\delta_{ij})e_{i}\omega^{j}.
     \end{eqnarray*}
    Denote $A[v]\coloneqq (v_{ij}+v \delta_{ij})$, and it is easy to verify  $A[\ell]$ is the identity matrix $I$. Moreover, for symmetric matrix $W^{i}~(1\leq i\leq k)$, we recall that 
    \begin{eqnarray*}
        \sigma_{k}(W^{1}, W^{2},\cdots, W^{k})\coloneqq \frac{1}{k!}\delta^{i_{1}i_{2}\cdots i_{k}}_{j_{1}j_{2}\cdots j_{k}}(W^{1})_{i_{1}j_{1}}(W^{2})_{i_{2}j_{2}}\cdots (W^{k})_{i_{k}j_{k}},
    \end{eqnarray*}
    and define the $n$-form,
    \begin{eqnarray*}
    \O (\underbrace{v,\cdots, v}_{l~\rm{copies}}, \underbrace{ h,\cdots, h,}_{(k-l+1)~\rm{copies}}\underbrace{\ell, \cdots, \ell}_{(n-k)~\rm{copies}}) 
        \coloneqq v \sigma_{k}(\underbrace{A[v], \cdots, A[v]}_{(l-1)~\rm{copies}}, \underbrace{W,\cdots, W}_{(k-l+1)~\rm{copies}})d\H^{n}.
    \end{eqnarray*}
From \cite[Proposition~2.9]{MWWX}, we have 
    \begin{eqnarray*}
    \int_{\C_{\theta}} \O (\underbrace{v,\cdots, v}_{l~\rm{copies}}, \underbrace{ h,\cdots, h}_{(k-l+1)~\rm{copies}}, \underbrace{\ell, \cdots, \ell}_{(n-k)~\rm{copies}})=\int_{\C_{\theta}}\O (h, \underbrace{v,\cdots, v}_{l~\rm{copies}}, \underbrace{ h,\cdots, h}_{(k-l)~\rm{copies}}, \underbrace{\ell, \cdots, \ell}_{(n-k)~\rm{copies}}).
    \end{eqnarray*}    
    If $v$ is in kernel of $L_{h}$, i.e., $L_{h}v=0$, then 
    \begin{eqnarray*}
        0=\int_{\C_{\theta}}v L_{h}(v)d\H^{n}&=&\int_{\C_{\theta}}v \frac{\partial \sigma_{k}(W)}{\partial W_{ij}}(v_{ij}+v\delta_{ij})d\H^{n}\notag\\
        &=&\int_{\C_{\theta}} \O(v, v, \underbrace {h,\cdots, h}_{(k-1)~\rm{copies}}, \underbrace{\ell, \cdots, \ell}_{(n-k)~\rm{copies}})\notag \\
        &=&\int_{\C_{\theta}}h \sigma_{k}(A[v], A[v], \underbrace {W,\cdots, W}_{(k-2)~\rm{copies}})d\H^{n}.
    \end{eqnarray*}
    Together with  the same argument in the proof of  \cite[Proposition~3.4]{GMTZ}, we conclude that $A[v]=0$, then 
    \begin{eqnarray}\label{linear sol}
			v=\sum_{\a=1}^{n} a_\a \<\xi, E_{\alpha}\>+a_{n+1} \<\xi+\cos\theta E_{n+1}, E_{n+1}\>,
		\end{eqnarray}for some constants $\{a_i\}_{i=1}^{n+1}\subset \R$.
    Since $$\n_{\mu}v=\cot\theta v \text{  on } \partial\C_{\theta},$$ we find that $v=\sum\limits_{\a=1}^n a_\a \<\xi, E_{\alpha}\>$ (cf. \cite[Proof of Lemma 4.3]{MWW-AIM}). This completes the proof.
\end{proof}

We are now in a position to prove Proposition \ref{pro open} and Theorem \ref{main theorem}.
\begin{proof}[\textbf{Proof of Proposition \ref{pro open}}]
For convex solution $h\in W^{m,2}_{N}(\C_{\theta}) $, by Lemma \ref{self adj of L} and Lemma \ref{kernel of L}, we know that $L_{h}$ is a self-adjoint operator satisfying  
$$\text{Ker}(L_{h})=\text{span}\{\<\xi, E_{1}\>,\cdots, \<\xi, E_{n}\>\},$$ and in turn 
\begin{eqnarray*}
    \text{Range} (L_{h})=\text{Ker}(L_{h}^{\ast})^\perp=\left(\text{span}\{\<\xi, E_{1}\>,\cdots, \<\xi, E_{n}\>\}\right)^{\perp}.
\end{eqnarray*}
Then for any $f$ satisfies $\int_{\C_{\theta}}\<\xi, E_{\alpha}\>f(\xi)d\H^{n}=0$, for $\alpha=1,\cdots, n$, we have $f\in\text{Range}(L_{h}),$ this implies the operator $L_{h}$ is surjective. The implicit function theorem indicates $G$ is locally invertible around $h$, therefore $\mathcal{S}$ is open in $[0,1]$.
\end{proof}

\begin{proof}[\textbf{Proof of Theorem \ref{main theorem}}]
Theorem \ref{convex thm}, Theorem \ref{priori estimate}, and Proposition \ref{pro open} imply that the set $\mathcal{S}$ is nonempty, closed, and open. Then we conclude that  $\mathcal{S}=[0, 1]$. In particular, by Theorem \ref{convex thm}, the solution $h$ of Eq. \eqref{eq-k} is strictly convex. The uniqueness can be shown similarly by applying the argument as in \cite[Theorem~2]{Chern}  and \cite[Theorem~3.2]{GMTZ}, and hence we omit it here for brevity.
\end{proof}

\ 

\bigskip

 {\footnotesize

\ 
 
\noindent\textbf{Acknowledgment:} X. M. was supported by  the National Key R$\&$D Program of China 2020YFA0712800  and the Postdoctoral Fellowship Program of CPSF under Grant Number  2025T180843 and 2025M773082.  L. W. was partially supported by CRM De Giorgi of Scuola Normale Superiore and PRIN project 2022E9CF89 of the University of Pisa.
  
\medskip

\ 

\noindent\textbf{Data availability:} No datasets were generated or analysed during the current study.
 
}

\bigskip

\noindent\textbf{\large Declarations} 

 \
 
 {\footnotesize

\noindent\textbf{Conflict of interest:} The authors declare that they have no conflict of interest.
\medskip

\noindent\textbf{Ethics approval:}  The authors declare that they adhere to the ethical standards followed by the journal.
\medskip

 }

\printbibliography

@article {Alex,
    AUTHOR = {Aleksandrov, Aleksandr Danilovich},
     TITLE = {Uniqueness theorems for surfaces in the large. {I}},
   JOURNAL = {Vestnik Leningrad. Univ.},
  FJOURNAL = {Vestnik Leningrad. Univ.},
    VOLUME = {11},
      YEAR = {1956},
    NUMBER = {19},
     PAGES = {5--17},
   MRCLASS = {53.0X},
  MRNUMBER = {86338},
MRREVIEWER = {H.\ Busemann},
}

@book {Ben,
    AUTHOR = {Andrews, Ben and Chow, Bennett and Guenther, Christine and
              Langford, Mat},
     TITLE = {Extrinsic geometric flows},
    SERIES = {Graduate Studies in Mathematics},
    VOLUME = {206},
 PUBLISHER = {American Mathematical Society, Providence, RI},
      YEAR = {2020},
     PAGES = {xxviii+759},
      ISBN = {978-1-4704-5596-5},
   MRCLASS = {53Exx (35K20 52A20 53A07 58J35)},
  MRNUMBER = {4249616},
MRREVIEWER = {Greg\'{o}rio Manoel Silva Neto},
       DOI = {10.1090/gsm/206},
       URL = {https://doi.org/10.1090/gsm/206},
}

@article {Berg,
    AUTHOR = {Berg, Christian},
     TITLE = {Corps convexes et potentiels sph\'{e}riques},
   JOURNAL = {Mat.-Fys. Medd. Danske Vid. Selsk.},
  FJOURNAL = {Matematiskfysiske Meddelelser udgivet af det Kongelige Danske
              Videnskabernes Selskab},
    VOLUME = {37},
      YEAR = {1969},
    NUMBER = {6},
     PAGES = {64 pp. (1969)},
      ISSN = {0023-3323},
   MRCLASS = {53.75 (31.00)},
  MRNUMBER = {254789},
MRREVIEWER = {John W. Green},
}

@article {BIS2023,
    AUTHOR = {Bryan, Paul and Ivaki, Mohammad N. and Scheuer, Julian},
     TITLE = {Christoffel-{M}inkowski flows},
   JOURNAL = {Trans. Amer. Math. Soc.},
  FJOURNAL = {Transactions of the American Mathematical Society},
    VOLUME = {376},
      YEAR = {2023},
    NUMBER = {4},
     PAGES = {2373--2393},
      ISSN = {0002-9947},
   MRCLASS = {35K10 (52A20)},
  MRNUMBER = {4557868},
       DOI = {10.1090/tran/8683},
       URL = {https://doi.org/10.1090/tran/8683},
}

@article {Chern,
    AUTHOR = {Chern, Shiing-shen},
     TITLE = {Integral formulas for hypersurfaces in {E}uclidean space and
              their applications to uniqueness theorems},
   JOURNAL = {J. Math. Mech.},
  FJOURNAL = {J. Math. Mech.},
    VOLUME = {8},
      YEAR = {1959},
     PAGES = {947--955},
   MRCLASS = {53.00},
  MRNUMBER = {114170},
MRREVIEWER = {C.-C. Hsiung},
       DOI = {10.1512/iumj.1959.8.58060},
       URL = {https://doi.org/10.1512/iumj.1959.8.58060},
}

@article {Chri,
    AUTHOR = {Christoffel, E. B.},
     TITLE = {Ueber die {B}estimmung der {G}estalt einer krummen
              {O}berfl\"{a}che durch lokale {M}essungen auf derselben},
   JOURNAL = {J. Reine Angew. Math.},
  FJOURNAL = {Journal f\"{u}r die Reine und Angewandte Mathematik. [Crelle's
              Journal]},
    VOLUME = {64},
      YEAR = {1865},
     PAGES = {193--209},
      ISSN = {0075-4102},
   MRCLASS = {DML},
  MRNUMBER = {1579295},
       DOI = {10.1515/crll.1865.64.193},
       URL = {https://doi.org/10.1515/crll.1865.64.193},
}

@article {CY76,
    AUTHOR = {Cheng, Shiu Yuen and Yau, Shing Tung},
     TITLE = {On the regularity of the solution of the {$n$}-dimensional
              {M}inkowski problem},
   JOURNAL = {Comm. Pure Appl. Math.},
  FJOURNAL = {Communications on Pure and Applied Mathematics},
    VOLUME = {29},
      YEAR = {1976},
    NUMBER = {5},
     PAGES = {495--516},
      ISSN = {0010-3640,1097-0312},
   MRCLASS = {53C45 (35J60)},
  MRNUMBER = {423267},
MRREVIEWER = {H.\ W.\ Guggenheimer},
       DOI = {10.1002/cpa.3160290504},
       URL = {https://doi.org/10.1002/cpa.3160290504},
}

@article {CW00,
    AUTHOR = {Chou, Kai-Seng and Wang, Xu-Jia},
     TITLE = {A logarithmic {G}auss curvature flow and the {M}inkowski
              problem},
   JOURNAL = {Ann. Inst. H. Poincar\'e{} C Anal. Non Lin\'eaire},
  FJOURNAL = {Annales de l'Institut Henri Poincar\'e{} C. Analyse Non
              Lin\'eaire},
    VOLUME = {17},
      YEAR = {2000},
    NUMBER = {6},
     PAGES = {733--751},
      ISSN = {0294-1449,1873-1430},
   MRCLASS = {53C44 (35B40 35K15 35K55)},
  MRNUMBER = {1804653},
MRREVIEWER = {John\ Urbas},
       DOI = {10.1016/S0294-1449(00)00053-6},
       URL = {https://doi.org/10.1016/S0294-1449(00)00053-6},
}

@book {Finn,
    AUTHOR = {Finn, Robert},
     TITLE = {Equilibrium capillary surfaces},
    SERIES = {Grundlehren der mathematischen Wissenschaften [Fundamental
              Principles of Mathematical Sciences]},
    VOLUME = {284},
 PUBLISHER = {Springer-Verlag, New York},
      YEAR = {1986},
     PAGES = {xvi+245},
      ISBN = {0-387-96174-7},
   MRCLASS = {49-02 (49F10 53-02 53A10 58E12)},
  MRNUMBER = {816345},
MRREVIEWER = {Helmut\ Kaul},
       DOI = {10.1007/978-1-4613-8584-4},
       URL = {https://doi.org/10.1007/978-1-4613-8584-4},
}

@article {Firey,
    AUTHOR = {Firey, W. J.},
     TITLE = {The determination of convex bodies from their mean radius of
              curvature functions},
   JOURNAL = {Mathematika},
  FJOURNAL = {Mathematika. A Journal of Pure and Applied Mathematics},
    VOLUME = {14},
      YEAR = {1967},
     PAGES = {1--13},
      ISSN = {0025-5793},
   MRCLASS = {52.30},
  MRNUMBER = {217699},
MRREVIEWER = {G. D. Chakerian},
       DOI = {10.1112/S0025579300007956},
       URL = {https://doi.org/10.1112/S0025579300007956},
}

@article {Firey1,
    AUTHOR = {Firey, William J.},
     TITLE = {Christoffel's problem for general convex bodies},
   JOURNAL = {Mathematika},
  FJOURNAL = {Mathematika. A Journal of Pure and Applied Mathematics},
    VOLUME = {15},
      YEAR = {1968},
     PAGES = {7--21},
      ISSN = {0025-5793},
   MRCLASS = {53.75 (52.00)},
  MRNUMBER = {230259},
MRREVIEWER = {E. Heil},
       DOI = {10.1112/S0025579300002321},
       URL = {https://doi.org/10.1112/S0025579300002321},
}

@article {FL14,
    AUTHOR = {Fraser, Ailana and Li, Martin Man-chun},
     TITLE = {Compactness of the space of embedded minimal surfaces with
              free boundary in three-manifolds with nonnegative {R}icci
              curvature and convex boundary},
   JOURNAL = {J. Differential Geom.},
  FJOURNAL = {Journal of Differential Geometry},
    VOLUME = {96},
      YEAR = {2014},
    NUMBER = {2},
     PAGES = {183--200},
      ISSN = {0022-040X,1945-743X},
   MRCLASS = {53C42},
  MRNUMBER = {3178438},
MRREVIEWER = {Claudio\ Gorodski},
       URL = {http://projecteuclid.org/euclid.jdg/1393424916},
}

@article {FS16,
    AUTHOR = {Fraser, Ailana and Schoen, Richard},
     TITLE = {Sharp eigenvalue bounds and minimal surfaces in the ball},
   JOURNAL = {Invent. Math.},
  FJOURNAL = {Inventiones Mathematicae},
    VOLUME = {203},
      YEAR = {2016},
    NUMBER = {3},
     PAGES = {823--890},
      ISSN = {0020-9910,1432-1297},
   MRCLASS = {58C40 (35P15 53A10)},
  MRNUMBER = {3461367},
MRREVIEWER = {Isabel\ M. C. Salavessa},
       DOI = {10.1007/s00222-015-0604-x},
       URL = {https://doi.org/10.1007/s00222-015-0604-x},
}

@article {Gb99,
    AUTHOR = {Guan, Bo},
     TITLE = {The {D}irichlet problem for {H}essian equations on Riemannian manifolds},
   JOURNAL = {Calc. Var. Partial Differential Equations},
  FJOURNAL = {Calculus of Variations and Partial Differential Equations},
    VOLUME = {8},
      YEAR = {1999},
    NUMBER = {1},
     PAGES = {45--69},
      ISSN = {0944-2669,1432-0835},
   MRCLASS = {58G30 (35J65 53C21)},
  MRNUMBER = {1666866},
MRREVIEWER = {John\ Urbas},
       DOI = {10.1007/s005260050116},
       URL = {https://doi.org/10.1007/s005260050116},
}

@article{Guan-note,
  title={Topics in geometric fully nonlinear equations},
  author={Guan, Pengfei},
  journal={Lecture Note},
DOI={https://www.math.mcgill.ca/guan/zheda0508.pdf},
  year={2002}
}

@article {GM,
    AUTHOR = {Guan, Pengfei and Ma, Xi-Nan},
     TITLE = {The {C}hristoffel-{M}inkowski problem. {I}. {C}onvexity of
              solutions of a {H}essian equation},
   JOURNAL = {Invent. Math.},
  FJOURNAL = {Inventiones Mathematicae},
    VOLUME = {151},
      YEAR = {2003},
    NUMBER = {3},
     PAGES = {553--577},
      ISSN = {0020-9910,1432-1297},
   MRCLASS = {35J60 (53C65)},
  MRNUMBER = {1961338},
MRREVIEWER = {Fabiana\ Leoni},
       DOI = {10.1007/s00222-002-0259-2},
       URL = {https://doi.org/10.1007/s00222-002-0259-2},
}

@article {GMZ,
    AUTHOR = {Guan, Pengfei and Ma, Xi-Nan and Zhou, Feng},
     TITLE = {The {C}hristofel-{M}inkowski problem. {III}. {E}xistence and
              convexity of admissible solutions},
   JOURNAL = {Comm. Pure Appl. Math.},
  FJOURNAL = {Communications on Pure and Applied Mathematics},
    VOLUME = {59},
      YEAR = {2006},
    NUMBER = {9},
     PAGES = {1352--1376},
      ISSN = {0010-3640},
   MRCLASS = {35J60 (35B05 53C65)},
  MRNUMBER = {2237290},
MRREVIEWER = {Fabiana Leoni},
       DOI = {10.1002/cpa.20118},
       URL = {https://doi.org/10.1002/cpa.20118},
}

@article {GMTZ,
    AUTHOR = {Guan, Pengfei and Ma, Xi-Nan and Trudinger, Neil and Zhu,
              Xiaohua},
     TITLE = {A form of {A}lexandrov-{F}enchel inequality},
   JOURNAL = {Pure Appl. Math. Q.},
  FJOURNAL = {Pure and Applied Mathematics Quarterly},
    VOLUME = {6},
      YEAR = {2010},
    NUMBER = {4},
     PAGES = {999--1012},
      ISSN = {1558-8599,1558-8602},
   MRCLASS = {26D07 (26B25 52A39)},
  MRNUMBER = {2742035},
MRREVIEWER = {Paolo\ Salani},
       DOI = {10.4310/PAMQ.2010.v6.n4.a2},
       URL = {https://doi.org/10.4310/PAMQ.2010.v6.n4.a2},
}

@incollection{Hilbert,
  title={Grundz{\"u}ge einer allgemeinen Theorie der linearen Integralgleichungen},
  author={Hilbert, David},
  booktitle={Integralgleichungen und Gleichungen mit unendlich vielen Unbekannten},
  pages={8--171},
  year={1912},
  publisher={Springer}
}

@article {Hur,
    AUTHOR = {Hurwitz, A.},
     TITLE = {Sur quelques applications g\'{e}om\'{e}triques des s\'{e}ries de
              {F}ourier},
   JOURNAL = {Ann. Sci. \'{E}cole Norm. Sup. (3)},
  FJOURNAL = {Annales Scientifiques de l'\'{E}cole Normale Sup\'{e}rieure. Troisi\`eme
              S\'{e}rie},
    VOLUME = {19},
      YEAR = {1902},
     PAGES = {357--408},
      ISSN = {0012-9593},
   MRCLASS = {DML},
  MRNUMBER = {1509016},
       URL = {http://www.numdam.org/item?id=ASENS_1902_3_19__357_0},
}

@article{JWXZ,
      title={Heintze-Karcher inequality and capillary hypersurfaces in a wedge}, 
      author={Jia, Xiaohan and Wang, Guofang and Xia, Chao and Zhang, Xuwen},
     JOURNAL = { Ann. Sc. Norm. Super. Pisa Cl. Sci},
      year={2024},
      eprint={2209.13839},
      archivePrefix={arXiv},
  primaryClass={math.DG},
      url={https://arxiv.org/abs/2209.13839}, 
}

@article {Lewy,
    AUTHOR = {Lewy, Hans},
     TITLE = {On differential geometry in the large. {I}. {M}inkowski's
              problem},
   JOURNAL = {Trans. Amer. Math. Soc.},
  FJOURNAL = {Transactions of the American Mathematical Society},
    VOLUME = {43},
      YEAR = {1938},
    NUMBER = {2},
     PAGES = {258--270},
      ISSN = {0002-9947,1088-6850},
   MRCLASS = {52A15 (35J15 52A39)},
  MRNUMBER = {1501942},
       DOI = {10.2307/1990042},
       URL = {https://doi.org/10.2307/1990042},
}

@article {LWW,
    AUTHOR = {Li, Qi-Rui and Wan, Dongrui and Wang, Xu-Jia},
     TITLE = {The {C}hristoffel problem by the fundamental solution of the
              {L}aplace equation},
   JOURNAL = {Sci. China Math.},
  FJOURNAL = {Science China. Mathematics},
    VOLUME = {64},
      YEAR = {2021},
    NUMBER = {7},
     PAGES = {1599--1612},
      ISSN = {1674-7283},
   MRCLASS = {53A99 (35J05)},
  MRNUMBER = {4280371},
MRREVIEWER = {Qiang Tu},
       DOI = {10.1007/s11425-019-1674-1},
       URL = {https://doi.org/10.1007/s11425-019-1674-1},
}

@article {LT,
    AUTHOR = {Lieberman, Gary M. and Trudinger, Neil S.},
     TITLE = {Nonlinear oblique boundary value problems for nonlinear
              elliptic equations},
   JOURNAL = {Trans. Amer. Math. Soc.},
  FJOURNAL = {Transactions of the American Mathematical Society},
    VOLUME = {295},
      YEAR = {1986},
    NUMBER = {2},
     PAGES = {509--546},
      ISSN = {0002-9947,1088-6850},
   MRCLASS = {35J65},
  MRNUMBER = {833695},
MRREVIEWER = {Pierre-Louis\ Lions},
       DOI = {10.2307/2000050},
       URL = {https://doi.org/10.2307/2000050},
}

@book {L96,
    AUTHOR = {Lieberman, Gary M.},
     TITLE = {Second order parabolic differential equations},
 PUBLISHER = {World Scientific Publishing Co., Inc., River Edge, NJ},
      YEAR = {1996},
     PAGES = {xii+439},
      ISBN = {981-02-2883-X},
   MRCLASS = {35-02 (35Bxx 35Dxx 35Kxx)},
  MRNUMBER = {1465184},
MRREVIEWER = {Siegfried Carl},
       DOI = {10.1142/3302},
       URL = {https://doi.org/10.1142/3302},
}

@article {LTU,
    AUTHOR = {Lions, Pierre-Louis
 and Trudinger, Neil Sidney
 and Urbas, John Ivan Eugene},
     TITLE = {The {N}eumann problem for equations of {M}onge-{A}mp\`ere
              type},
   JOURNAL = {Comm. Pure Appl. Math.},
  FJOURNAL = {Communications on Pure and Applied Mathematics},
    VOLUME = {39},
      YEAR = {1986},
    NUMBER = {4},
     PAGES = {539--563},
      ISSN = {0010-3640,1097-0312},
   MRCLASS = {35J25 (35B45 35B50 53C45)},
  MRNUMBER = {840340},
MRREVIEWER = {Philippe\ Delano\"e},
       DOI = {10.1002/cpa.3160390405},
       URL = {https://doi.org/10.1002/cpa.3160390405},
}

@article {MQ,
    AUTHOR = {Ma, Xi-Nan and Qiu, Guohuan},
     TITLE = {The {N}eumann problem for {H}essian equations},
   JOURNAL = {Comm. Math. Phys.},
  FJOURNAL = {Communications in Mathematical Physics},
    VOLUME = {366},
      YEAR = {2019},
    NUMBER = {1},
     PAGES = {1--28},
      ISSN = {0010-3616,1432-0916},
   MRCLASS = {35J60 (35A09)},
  MRNUMBER = {3919441},
MRREVIEWER = {Gabrielle\ Nornberg},
       DOI = {10.1007/s00220-019-03339-1},
       URL = {https://doi.org/10.1007/s00220-019-03339-1},
}

@book {Maggi,
    AUTHOR = {Maggi, Francesco},
     TITLE = {Sets of finite perimeter and geometric variational problems},
    SERIES = {Cambridge Studies in Advanced Mathematics},
    VOLUME = {135},
      NOTE = {An introduction to geometric measure theory},
 PUBLISHER = {Cambridge University Press, Cambridge},
      YEAR = {2012},
     PAGES = {xx+454},
      ISBN = {978-1-107-02103-7},
   MRCLASS = {49-01 (26B20 28-02 49-02 49Q05 49Q20)},
  MRNUMBER = {2976521},
MRREVIEWER = {Giovanni\ Alberti},
       DOI = {10.1017/CBO9781139108133},
       URL = {https://doi.org/10.1017/CBO9781139108133},
}

@article {MWW-AIM,
  AUTHOR = {Xinqun Mei and Guofang Wang and Liangjun Weng},
     TITLE = {The capillary Minkowski problem},
   JOURNAL = {Adv. Math.},
  FJOURNAL = {Advances in Mathematics},
    VOLUME = {469},
      YEAR = {2025},
     PAGES = {Paper No.110230},
      ISSN = {0001-8708},
   MRCLASS = { 53C65, 35J66( 53C42, 53C45, 35J60)},
       DOI = {https://doi.org/10.1016/j.aim.2025.110230},
       URL = {https://www.sciencedirect.com/science/article/pii/S0001870825001288},
}

@article{MWW-Weingarten,
      title={Convex capillary hypersurfaces of prescribed curvature problem}, 
      author={Xinqun Mei and Guofang Wang and Liangjun Weng},
      year={2025},
      eprint={2504.14392},
      archivePrefix={arXiv},
      primaryClass={math.DG},
      url={https://arxiv.org/abs/2504.14392}, 
}

@article {MWW-GCF,
    AUTHOR = {Mei, Xinqun and Wang, Guofang and Weng, Liangjun},
title={The capillary Gauss curvature flow}, 
      author={Xinqun Mei and Guofang Wang and Liangjun Weng},
      year={2025},
      eprint={2506.09840},
      archivePrefix={arXiv},
      primaryClass={math.DG},
      url={https://arxiv.org/abs/2506.09840}, 
}

@article {MWW-Lp,
      title={The capillary $L_p$-Minkowski problem}, 
      author={Xinqun Mei and Guofang Wang and Liangjun Weng},
      year={2025},
      eprint={2505.07746},
      archivePrefix={arXiv},
      primaryClass={math.DG},
      url={https://arxiv.org/abs/2505.07746}, 
}

@article {MWWX,
 title={Alexandrov-Fenchel inequalities for convex hypersurfaces in the half-space with capillary boundary II}, 
      author={Xinqun Mei and Guofang Wang and Liangjun Weng and Chao Xia},
   JOURNAL = {Math. Z.},
  FJOURNAL = {Mathematische Zeitschrift},
    VOLUME = {310},
  NUMBER = {4},
      YEAR = {2025},
     PAGES = {Paper No.71},
      ISSN = {0025-5874,1432-1823},
       DOI = {10.1007/s00209-025-03781-z},
       URL = {https://doi.org/10.1007/s00209-025-03781-z},
}

@article {Min,
    AUTHOR = {Minkowski, Hermann},
     TITLE = {Allgemeine Lehrsätze über die konvexen Polyeder},
   JOURNAL = {Nachr. Ges. Wiss. Gottingen},
    YEAR = {1897},
     PAGES = {198–219},
}

@article {Nire,
    AUTHOR = {Nirenberg, Louis},
     TITLE = {The {W}eyl and {M}inkowski problems in differential geometry
              in the large},
   JOURNAL = {Comm. Pure Appl. Math.},
  FJOURNAL = {Communications on Pure and Applied Mathematics},
    VOLUME = {6},
      YEAR = {1953},
     PAGES = {337--394},
      ISSN = {0010-3640,1097-0312},
   MRCLASS = {53.0X},
  MRNUMBER = {58265},
MRREVIEWER = {H.\ Busemann},
       DOI = {10.1002/cpa.3160060303},
       URL = {https://doi.org/10.1002/cpa.3160060303},
}

@article {Pog52,
    AUTHOR = {Pogorelov, Aleksey Vasil\cprime},
     TITLE = {Regularity of a convex surface with given {G}aussian
              curvature},
   JOURNAL = {Mat. Sbornik N.S.},
  FJOURNAL = {Mat. Sbornik N.S.},
    VOLUME = {31/73},
      YEAR = {1952},
     PAGES = {88--103},
   MRCLASS = {52.0X},
  MRNUMBER = {52807},
MRREVIEWER = {H.\ Busemann},
}

@book {Pog,
    AUTHOR = {Pogorelov, Aleksey Vasil\cprime},
     TITLE = {The {M}inkowski multidimensional problem},
    SERIES = {Scripta Series in Mathematics},
      NOTE = {Translated from the Russian by Vladimir Oliker,
              Introduction by Louis Nirenberg},
 PUBLISHER = {V. H. Winston \& Sons, Washington, DC; Halsted Press [John   Wiley \& Sons], New York-Toronto-London},
      YEAR = {1978},
     PAGES = {106},
      ISBN = {0-470-99358-8},
   MRCLASS = {53C45 (35J60 52A20)},
  MRNUMBER = {478079},
MRREVIEWER = {H.\ W.\ Guggenheimer},
}

@article {SWX,
    AUTHOR = {Scheuer, Julian and Wang, Guofang and Xia, Chao},
     TITLE = {Alexandrov-{F}enchel inequalities for convex hypersurfaces
              with free boundary in a ball},
   JOURNAL = {J. Differential Geom.},
  FJOURNAL = {Journal of Differential Geometry},
    VOLUME = {120},
      YEAR = {2022},
    NUMBER = {2},
     PAGES = {345--373},
      ISSN = {0022-040X,1945-743X},
   MRCLASS = {53A10 (53C21 53C24 53E10)},
  MRNUMBER = {4385120},
MRREVIEWER = {Haizhong\ Li},
       DOI = {10.4310/jdg/1645207496},
       URL = {https://doi.org/10.4310/jdg/1645207496},
}

@book {Sch,
    AUTHOR = {Schneider, Rolf},
     TITLE = {Convex bodies: the {B}runn-{M}inkowski theory},
    SERIES = {Encyclopedia of Mathematics and its Applications},
    VOLUME = {151},
   EDITION = {expanded},
 PUBLISHER = {Cambridge University Press, Cambridge},
      YEAR = {2014},
     PAGES = {xxii+736},
      ISBN = {978-1-107-60101-7},
 URL={https://doi.org/10.1017/CBO9781139003858},
   MRCLASS = {52-02 (52A20 52A39)},
  MRNUMBER = {3155183},
MRREVIEWER = {Andrea Colesanti},
}

@incollection {Spruck,
    AUTHOR = {Spruck, Joel},
     TITLE = {Geometric aspects of the theory of fully nonlinear elliptic
              equations},
 BOOKTITLE = {Global theory of minimal surfaces},
    SERIES = {Clay Math. Proc.},
    VOLUME = {2},
     PAGES = {283--309},
 PUBLISHER = {Amer. Math. Soc., Providence, RI},
      YEAR = {2005},
   MRCLASS = {53C21 (35J60)},
  MRNUMBER = {2167264},
MRREVIEWER = {David L. Finn},
}

@article {STW,
    AUTHOR = {Sheng, Weimin and Trudinger, Neil and Wang, Xu-Jia},
     TITLE = {Convex hypersurfaces of prescribed {W}eingarten curvatures},
   JOURNAL = {Comm. Anal. Geom.},
  FJOURNAL = {Communications in Analysis and Geometry},
    VOLUME = {12},
      YEAR = {2004},
    NUMBER = {1-2},
     PAGES = {213--232},
      ISSN = {1019-8385,1944-9992},
   MRCLASS = {53C45 (35J60 53C21)},
  MRNUMBER = {2074877},
MRREVIEWER = {Fabiana\ Leoni},
       URL = {http://projecteuclid.org/euclid.cag/1088454741},
}

@article{Suss,
  title={Bestimmung einer geschlossenen konvexen Fl{\"a}che durch die Summe ihrer Hauptkr{\"u}mmungsradien},
  author={S{\"u}ss, Wilhelm},
   JOURNAL = {Math. Ann.},
  FJOURNAL = {Mathematische Annalen},
  volume={108},
  number={1},
  pages={143--148},
  year={1933},
  publisher={Springer}
}

@article {WW20,
    AUTHOR = {Wang, Guofang and Weng, Liangjun},
     TITLE = {A mean curvature type flow with capillary boundary in a unit
              ball},
   JOURNAL = {Calc. Var. Partial Differential Equations},
  FJOURNAL = {Calculus of Variations and Partial Differential Equations},
    VOLUME = {59},
      YEAR = {2020},
    NUMBER = {5},
     PAGES = {Paper No. 149, 26},
      ISSN = {0944-2669,1432-0835},
   MRCLASS = {53E10 (35K93)},
  MRNUMBER = {4137803},
MRREVIEWER = {Yoshihiro\ Tonegawa},
       DOI = {10.1007/s00526-020-01812-7},
       URL = {https://doi.org/10.1007/s00526-020-01812-7},
}

@article {WWX22,
    AUTHOR = {Wang, Guofang and Weng, Liangjun and Xia, Chao},
     TITLE = {Alexandrov-{F}enchel inequalities for convex hypersurfaces in
              the half-space with capillary boundary},
   JOURNAL = {Math. Ann.},
  FJOURNAL = {Mathematische Annalen},
    VOLUME = {388},
      YEAR = {2024},
    NUMBER = {2},
     PAGES = {2121--2154},
      ISSN = {0025-5831},
   MRCLASS = {53E40 (35K96 53C21 53C24)},
  MRNUMBER = {4700391},
       DOI = {10.1007/s00208-023-02571-4},
       URL = {https://doi.org/10.1007/s00208-023-02571-4},
}

@article {WX2019,
    AUTHOR = {Wang, Guofang and Xia, Chao},
     TITLE = {Uniqueness of stable capillary hypersurfaces in a ball},
   JOURNAL = {Math. Ann.},
  FJOURNAL = {Mathematische Annalen},
    VOLUME = {374},
      YEAR = {2019},
    NUMBER = {3-4},
     PAGES = {1845--1882},
      ISSN = {0025-5831,1432-1807},
   MRCLASS = {53C42},
  MRNUMBER = {3985125},
MRREVIEWER = {Fei-Tsen\ Liang},
       DOI = {10.1007/s00208-019-01845-0},
       URL = {https://doi.org/10.1007/s00208-019-01845-0},
}

@article {WeX21,
    AUTHOR = {Weng, Liangjun and Xia, Chao},
     TITLE = {Alexandrov-{F}enchel inequality for convex hypersurfaces with
              capillary boundary in a ball},
   JOURNAL = {Trans. Amer. Math. Soc.},
  FJOURNAL = {Transactions of the American Mathematical Society},
    VOLUME = {375},
      YEAR = {2022},
    NUMBER = {12},
     PAGES = {8851--8883},
      ISSN = {0002-9947},
   MRCLASS = {53C21 (35K96 52A40)},
  MRNUMBER = {4504655},
MRREVIEWER = {Changwei Xiong},
       DOI = {10.1090/tran/8756},
       URL = {https://doi.org/10.1090/tran/8756},
}

@article {Xia,
    AUTHOR = {Xia, Chao},
     TITLE = {On an anisotropic {M}inkowski problem},
   JOURNAL = {Indiana Univ. Math. J.},
  FJOURNAL = {Indiana University Mathematics Journal},
    VOLUME = {62},
      YEAR = {2013},
    NUMBER = {5},
     PAGES = {1399--1430},
      ISSN = {0022-2518,1943-5258},
   MRCLASS = {53A35 (52A99)},
  MRNUMBER = {3188548},
MRREVIEWER = {Zhizhang\ Wang},
       DOI = {10.1512/iumj.2013.62.5083},
       URL = {https://doi.org/10.1512/iumj.2013.62.5083},
}

\end{document}